\documentclass[12pt]{amsart}
\textwidth=14.5cm
\oddsidemargin=1cm
\evensidemargin=1cm
\usepackage{amsxtra}
\usepackage{amscd}
\usepackage{amsthm}
\usepackage{amsfonts}
\usepackage{amssymb}
\usepackage{eucal}

\usepackage{xcolor}

\input prepictex
\input pictex
\input postpictex

\usepackage{mathptm}

\usepackage{verbatim}

\usepackage{color}


\newtheorem{claim}{\indent Claim}
\newtheorem{theorem}{Theorem}[section]
\newtheorem*{theorem*}{Theorem}
\newtheorem{lemma}[theorem]{Lemma}
\newtheorem{proposition}[theorem]{Proposition}
\newtheorem*{proposition*}{Proposition}
\newtheorem{corollary}[theorem]{Corollary}
\newtheorem{definition}[theorem]{Definition}
\newtheorem{remark}[theorem]{Remark}

\numberwithin{equation}{section}


\newcommand{\Z}{{\mathbb Z}}

\newcommand{\C}{{\mathbb C}}
\newcommand{\K}{{\C}}

\newcommand{\fb}{{\mathfrak b}}
\newcommand{\fg}{{\mathfrak g}}
\newcommand{\fh}{{\mathfrak h}}
\newcommand{\fn}{{\mathfrak n}}

\newcommand{\U}{{\rm U}}
\newcommand{\Uq}{\widehat{{\rm U}}_q}


\def\a{\alpha}

\def\d{\delta}
\def\D{\Delta}

\def\l{\lambda}

\def\o{\omega}

\def\es{\epsilon}


\begin{document}

\title[Integrable representations of quantum affine superalgebras]
{Integrable representations of the \\ quantum affine special linear superalgebra}

\author{Yuezhu Wu}
\author{R. B. Zhang}
\address[Wu]{School of Mathematics and Statistics, Changshu Institute of Technology, Changshu, Jiangsu, China}
\address[Wu, Zhang]{School of Mathematics and Statistics, University of Sydney, Sydney, NSW 2006, Australia}
\email{yuezhuwu@maths.usyd.edu.au}
\email{ruibin.zhang@sydney.edu.au}

\begin{abstract}
The simple integrable modules with finite dimensional weight spaces are classified
for the quantum affine special linear superalgebra $\U_q(\widehat{\mathfrak{sl}}(M|N))$
at generic $q$.
Any such module is shown to be a highest weight or lowest weight module with respect
to one of the two natural triangular decompositions of the quantum affine superalgebra
depending on whether the level of the module is zero or not.
Furthermore, integrable $\U_q(\widehat{\mathfrak{sl}}(M|N))$-modules
at nonzero levels exist only if $M$ or $N$ is $1$.

\noindent{\bf Key words:} quantum supergroups, quantum affine superalgebras,
integrable modules, highest weight modules.
\end{abstract}
\maketitle


\section{Introduction}\label{sect:intro}

Quantum supergroups associated with simple Lie superalgebras and
their affine analogues were introduced \cite{BGZ, y94, ZGB91b}
(see also \cite{CK, FLV, S})  in the early 90s,  and their structure and representations
have since been  extensively developed (see, e.g., \cite{BKK,
KT, La, MZ, PSV, WZ, YZ, Z, Z92, Z93, Z97, Z98, Zo98, Zo01}). 
Quantum supergroups were applied to
solve interesting problems in a variety of areas such as
topology of knots and $3$-manifolds \cite{LGZ, Z92a, Z95},
quantum supergeometry \cite{Z98, Z04}, and in particular,
Yang-Baxter type integrable models \cite{BGZ, FK, ZBG91, YZ}, where
the problem of constructing solutions of the spectral parameter dependent
Yang-Baxter equation was converted 
to the much easier linear problem of solving the
$\Z_2$-graded Jimbo equations \cite{BGZ} by 
using the representation theory of quantum supergroups. 

The $\Z_2$-Jimbo equations determine the universal $R$-matrix \cite{KT} 
of quantum affine superalgebras in loop representations.
A basic problem in studying the equations is to determine which finite dimensional
irreducible representation of a quantum supergroup can be lifted to a
representation of the corresponding quantum affine superalgebra.
It was shown that the natural representations of quantum orthosymplectic supergroups
can be lifted \cite{ZBG91}, and more importantly,
every finite dimensional irreducible representation of the quantum general
linear supergroup $\U_q(\mathfrak{gl}(M|N))$ \cite{Z93}
can be lifted to an irreducible representation of the untwisted quantum affine general
linear superalgebra $\U_q(\widehat{\mathfrak{gl}}(M|N))$ \cite{Z92}.

In a very recent paper \cite{zh13},  Huafeng Zhang gave a classification of
the finite dimensional simple modules for
$\U_q(\widehat{\mathfrak{sl}}(M|N))$ (more precisely the subalgebra
$\U'_q(\widehat{\mathfrak{sl}}(M|N))$ without the degree operator) at generic $q$,
providing a parametrisation of such simple modules in terms of  highest weight polynomials.
This has much similarity to the classification \cite{Z96}
of  finite dimensional simple modules  for the ${\mathfrak{gl}}(M|N)$ super Yangian,
as explained in \cite{zh13}.

The present paper  generalises results of \cite{r11, rz04, wz14}
on  $\widehat{\mathfrak{sl}}(M|N)$ to the quantum setting to obtain a classification of
the simple integrable modules with finite dimensional
weight spaces for $\U_q(\widehat{\mathfrak{sl}}(M|N))$ at generic $q$.
A module for a quantum affine superalgebra $\U_q(\widehat{\fg})$ is integrable
if it is integrable with respect to the subalgebra $\U_q(\widehat{\fg}_{\bar0})$,
which is the quantised universal enveloping algebra of
the even subalgebra $\widehat{\fg}_{\bar0}$ of $\widehat{\fg}$.
Thus the integrability of a $\U_q(\widehat{\mathfrak{sl}}(M|N))$-module
amounts to integrability with respect to the subalgebras
$\U_q(\widehat{\mathfrak{sl}}(M))$ and $\U_{q^{-1}}(\widehat{\mathfrak{sl}}(N))$.
The requirement of having finite dimensional weight spaces imposes further stringent
conditions on the module.

One result of this paper,  Theorem \ref{thm:hw},  states  that a zero-level simple integrable module with finite dimensional
weight spaces is necessarily of highest weight type
with respect to the triangular decomposition of $\U_q(\widehat{\mathfrak{sl}}(M|N))$
induced by the distinguished triangular decomposition of $\mathfrak{sl}(M|N)$ (cf. equation \eqref{loop-tri-decomp}).
A classification of such modules is given in terms of their highest weight polynomials (see Theorem \ref{thm:class}).

We show in Theorem \ref{thm:class} that any simple integrable $\U_q(\widehat{\mathfrak{sl}}(M|N))$-module
$V$ of zero level with finite dimensional weight spaces can be embedded in a quantum loop module (cf \eqref{L-module})
as a direct summand. By setting the loop parameter to $1$, we obtain from the image of $V$ a
 finite dimensional evaluation $\U'_q(\widehat{\mathfrak{sl}}(M|N))$-module
(cf. \eqref{evaluat}). This way we recover all the finite dimensional
simple $\U'_q(\widehat{\mathfrak{sl}}(M|N))$-modules, which were classified in \cite{zh13}.

We prove in Theorem \ref{thm:level}  that
only when $M$ or $N$ is equal to $1$, $\U_q(\widehat{\mathfrak{sl}}(M|N))$ admits
integrable modules with finite dimensional
weight spaces at nonzero levels.  Such a simple integrable module is necessarily a
highest or lowest weight module
with respect to the standard triangular decomposition of $\U_q(\widehat{\mathfrak{sl}}(M|N))$
given in Proposition \ref{tri-decomp}.
The necessary and sufficient condition for a simple highest weight $\U_q(\widehat{\mathfrak{sl}}(M|N))$-module
to be integrable with finite dimensional weight spaces is that the highest weight is
integral and dominant \cite{cp91, L93} with respect to the quantised universal enveloping algebra
$\U_q(\widehat{\mathfrak{sl}}(M|N)_{\bar 0})$ of the even subalgebra of $\widehat{\mathfrak{sl}}(M|N)$.

We mention that the quantised universal enveloping superalgebras of symmetrizable
affine Lie superalgebras (without isotropic odd simple roots)
admit many more integrable highest weight modules
at nonzero levels.  A classification of such simple modules was obtained in \cite{Z97},
where  a ``super duality" was discovered identifying such
quantised universal enveloping superalgebras with certain classes of ordinary
quantum affine algebras.

\section{Preliminaries}\label{sect:prelim}

In order to study the integrable modules for the quantum affine special linear superalgebra
$\U_q(\widehat{\mathfrak{sl}}(M|N))$, we need its loop presentation \cite{y99},
which we discuss here.

\subsection{} Let us start by discussing some basic structural properties of the special linear superalgebra
\cite{k77}. Fix positive integers $M$ and $N$, and assume that at least one of them is greater than $1$.
Let $ I$ be the set $\{1,2,\dots, M+N-1\}$.
We choose the distinguished Borel subalgebra $\fb$ for $\mathfrak{sl}(M|N)$,
which consists of the upper triangular matrices.
The Cartan subalgebra $\fh\subset\fb$ consists of the diagonal matrices in $\mathfrak{sl}(M|N)$.
Let $\fn$ be the strictly upper triangular matrices, then $\fb=\fh\oplus\fn$.

Equip the free $\Z$-module $\oplus_{i=1}^{M+N}\Z \es_i$
with the following bilinear from
\[
(\es_i,\es_j) =l_i\d_{ij} ,\quad  l_i
=\left\{\begin{array}{ll}
    1, & \mbox{ if } 1 \le i \le M,\\
    -1,  & \mbox{ if }  M + 1 \le  i \le  M + N.
    \end{array}    \right.
\]
Then the roots of $\mathfrak{sl}(M|N)$ can be expressed as $\es_i-\es_j$ for all $i\ne j$,
and the simple roots with respect to $\fb$ are given by
$\{\a_i:=\es_i-\es_{i+1}|i\in I\}.$
The even subalgebra $\mathfrak{sl}(M|N)_{\bar 0}$ of $\mathfrak{sl}(M|N)$ is $\mathfrak{sl}(M)\oplus\C z\oplus\mathfrak{sl}(N)$,
where $\C z$ is the center of $\mathfrak{sl}(M|N)_{\bar 0}$.
Let $\mathfrak{h}_1$ (resp. $\mathfrak{h}_2$)
be the  Cartan subalgebra of $\mathfrak{sl}(M)$ (resp.
$\mathfrak{sl}(N)$), and denote by $\D_0^1$ (resp. $\D_0^2$) the corresponding set of roots.
Denote by $Q$ the root lattice of  $\mathfrak{sl}(M|N)$, and set
$Q^+=\sum_{i\in I}\Z_{\ge 0}\a_i$.

Let $\widehat{\mathfrak{sl}}(M|N)=\mathfrak{sl}(M|N)\otimes \C[t,t^{-1}]\oplus \C c\oplus\C d$
be the untwisted extended affine Lie superalgebra associated with $\mathfrak{sl}(M|N)$, where $c$ spans
the center, and $d$ is the degree operator.  We take the
following Cartan subalgebra $\widehat{\fh}=\fh\otimes 1\oplus \C c\oplus \C d$ for $\widehat{\mathfrak{sl}}(M|N)$.
Introduce the affine weight $\o_0$ and null root $\d$ in $\fh^*$ such that
$
\o_0(c)=1, \ \delta(d)=1$,  and $\o_0(h)=\delta(h)=0,  \  \forall h\in \fh.
$
Then
\[
(\o_0,\a_i)=(\d,\a_i)=0, \  \forall i\in I, \quad (\o_0,\o_0)=(\d,\d)=0, \quad (\o_0,\d)=1.
\]
Then $\o_0$, $\d$ and all the $\a_i$ together form a basis of $\fh^*$.
%
Denote by $\widehat{Q}$ the $\Z$-span of the $\alpha_i$ and $\delta$, i.e.,
the root lattice of  $\widehat{\mathfrak{sl}}(M|N)$,  and let $\a_0=\d-\sum_{i\in I}\a_i $.

Recall that we have the following Borel subalgebras of $\widehat{\mathfrak{sl}}(M|N)$,
\begin{eqnarray}
&&\C c\oplus \C d\oplus\fb\oplus {\mathfrak{sl}}(M|N)\otimes t\C[t], \label{eq:borel-1}\\
&&\C c\oplus \C d\oplus\fb\otimes\C[t, t^{-1}], \label{eq:borel-2}
\end{eqnarray}
where \eqref{eq:borel-1} is the standard
Borel subalgebra, while \eqref{eq:borel-2} is induced by $\fb\subset\mathfrak{sl}(M|N)$.
Later we will make use of quantum universal enveloping superalgebras of these Borel subalgebras.

\subsection{}
Let us fix once for all a nonzero complex number $q$ which is not a root of $1$.
For any $m\in\Z_+$, define
$[m]_q=\frac{q^m-q^{-m}}{q-q^{-1}}$.
Let $q_i = q^{(\es_i,\es_i)}$ for $i\in I$, and set
\[
a_{ij}= (\es_i - \es_{i+1}, \es_j -\es_{j+1}), \quad \mbox{for all  }  i, j \in I.
\]

The quantum affine superalgebra  $\Uq:=\U_q(\widehat{\mathfrak{sl}}(M|N))$ is
a Hopf superalgebra over $\K$ \cite{BGZ, KT, y99, Z92, ZGB91b}, which has two presentations,
a Serre presentation in terms of Chevalley generators and Serre type relations,
and loop presentation.
Its loop presentation was constructed in \cite{y99}.
\begin{definition}\label{def:qaff}
The loop presentation of  $\Uq:=\U_q(\widehat{\mathfrak{sl}}(M|N))$ is as follows.
The set of generators is
 \[
\{X^{\pm }_i(n),K^{\pm 1}_i,  h_i(s), C^{\pm1/2}, D^{\pm 1}\ |\ i\in I, n, s  \in\Z, s\ne 0\},
\]
where $X^{\pm }_M(m)$ for all $m\in\Z$ are odd, and the other elements are even.

The relations are
\begin{eqnarray*}
&&C^{\pm 1/2} \mbox{ are central},\\
&&K_iK^{-1}_i = 1 = K^{-1}_i K_i,\quad  [K_i,K_j ] = [K_i, h_j(s)] = 0,\quad [K_i,D]=0,\\
&&DD^{-1}=D^{-1}D=1, \quad
Dh_i(s)D^{-1}=q^sh_i(s),\quad DX^{\pm}_i(s)D^{-1}=q^sX^{\pm}_i(s),\\
&& K_iX^{\pm}_j(n)K_i^{-1} = q^{\pm a_{ij}}X^{\pm}_j(n),\\
&&[h_i(m), h_j(n)]=\d_{m+n,0}\frac{[ml_ia_{ij}]_{q_i}(C^m-C^{-m})}{m(q_i-q_i^{-1})},\\
&&[h_i(s),X^{\pm}_j(n)] = \pm\frac{[sl_ia_{ij}]_{q_i}}{s}C^{\mp\frac{|s|}{2}}X^{\pm}_j(n+s),\\
&&[X^+_i(m), X^-_j(n)] =\d_{ij}\frac{C^{(m-n)/2}\phi^+_i(m+n) -C^{-(m-n)/2}\phi^-_i(m+n)}{q_i - q^{-1}_i},\\
&&[X^{\pm}_i(m),X^{\pm}_j(n)] = 0 \quad \mbox{  for } a_{ij} = 0, \\
&&[X^{\pm}_i(m+1), X^\pm_j(n)]_{q^{\pm a_{ij}}}+[X^{\pm}_j(n+1)
, X^{\pm}_i(m)]_{q^{\pm a_{ij}}}=0\quad \mbox{  for }a_{ij}\ne 0,
\end{eqnarray*}
and
\begin{eqnarray*}
&& {\rm Sym}_{m,n}[X^{\pm}_i(m), [X^\pm_i(n),X^\pm_j(k)]_{q^{-1} }]_q   = 0
\quad \mbox{  for } a_{ij} = \pm 1, i \ne  M,\\
&&{\rm Sym}_{n,u}[[[X^\pm_{M-1}(m),X^\pm_M(n)]_{q^{-1}} , X^\pm_{M+1}(k)]_q,X^\pm_M(u)] = 0,
\quad \mbox{ when }M,N> 1,
\end{eqnarray*}
where $\phi^\pm_i(n)$ are given by the generating series
\[
\sum\limits_{n\in\Z}\phi^{\pm}_i(n)z^n= K^{\pm 1}_i{\rm exp}(±(q_i - q_i^{-1})
\sum\limits_{s\in \Z_{>0}}h_i(\pm s)z^{\pm s})\in \Uq[[z, z^{-1}]],
\]
and the symbol ${\rm Sym}_{k,l}$ means symmetrization with respect to $k$ and $l$.
We have used the notation of $q$-brackets $[X,Y]_u = XY- (-1)^{|X| |Y|}uY X$,
and written $[X,Y]$ for $[X,Y]_1$ for simplicity.
\end{definition}

We denote by $\U'_q(\widehat{\mathfrak{sl}}(M|N))$ the subalgebra of $\Uq$
without the generators $D^{\pm 1}$.
By dropping the generators $X_M^{\pm}(n)$ for all $n\in\Z$, we obtain a
subalgebra of $\Uq$, which is the quantised universal enveloping algebra
$\U_q(\widehat{\mathfrak{sl}}(M|N)_{\bar0})$ of the even subalgebra
${\mathfrak{sl}}(M|N)_{\bar0}$ of ${\mathfrak{sl}}(M|N)$. Note that this
subalgebra contains $\U_q(\widehat{\mathfrak{sl}}(M))$
and $\U_{q^{-1}}(\widehat{\mathfrak{sl}}(N))$ as subalgebras.

The superalgebra $\Uq$ is $\Z$-graded  $\Uq=\oplus_{k\in\Z}(\Uq)_k$ with
homogeneous components $(\Uq)_k=\{x\in \Uq\ |\ DxD^{-1}=q^kx\}$.
Let us introduce the following subalgebras of $\Uq$:
\begin{itemize}
\item $\Uq^{+}(\gg)$ (resp. $\Uq^{+}(\ll)$) denotes
the subalgebra generated by the elements $X^+_i(m)$ for all $m\ge 0$ and $i\in I$
(resp. $X^+_i(m)$ for all $m<0$ and $i\in I$);
\item $\Uq^{-}(\gg)$ (resp. $\Uq^{-}(\ll)$) denotes
the subalgebra generated by the elements $X^-_i(m)$ for all $ m> 0$ and $i\in I$
(resp. $X^-_i(m)$ for all $ m\le 0$ and $i\in I$);
\item $\Uq^0(\gg)$ (resp. $\Uq^0(\ll)$) denotes
the subalgebra generated by the elements $h_i(r)$ for all $r> 0$ and $i\in I$
(resp. $h_i(r)$ for all $r<0$ and $i\in I$);
\item $\Uq^0$ denotes
the subalgebra generated by $K^{\pm 1}_i$ ($ i\in I$), $D^{\pm 1}$ and $C^{\pm1/2}$.
\end{itemize}
We have the following obvious result.
\begin{proposition}\label{tri-decomp}
Define the following subspaces  of $\Uq$
\[
\Uq(+):=\Uq^+(\gg) \Uq^-(\gg)\Uq^0(\gg), \quad \Uq(-):=\Uq^+(\ll) \Uq^-(\ll)\Uq^0(\ll).
\]
Then $\widehat{\overline{B}}=\Uq(-)\Uq(0)$ and $\widehat{B}=\Uq(0)\Uq(+)$ are subalgebras,  and
\[
\Uq=\Uq(-)\Uq^{0}\Uq(+).
\]
\end{proposition}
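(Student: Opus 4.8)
The statement is labelled "obvious", and indeed the strategy is the standard PBW-type argument for loop presentations of quantum affine (super)algebras. The plan is to establish, in order: (i) the multiplication maps giving the factorisations are surjective onto $\Uq$; (ii) the two halves $\widehat{\overline B}$ and $\widehat B$ are closed under multiplication. The key input is the set of defining relations in Definition \ref{def:qaff}, which one uses as straightening rules.

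First I would argue surjectivity of the map $\Uq(-)\otimes\Uq^0\otimes\Uq(+)\to\Uq$. Since the listed generators $X^\pm_i(n)$, $h_i(s)$, $K_i^{\pm1}$, $C^{\pm1/2}$, $D^{\pm1}$ generate $\Uq$, it suffices to show that the image of the multiplication map is a subalgebra, i.e. that it is stable under left multiplication by each generator. The generators of $\Uq^0$ commute past everything up to scalars (the relations $K_iX^\pm_j(n)K_i^{-1}=q^{\pm a_{ij}}X^\pm_j(n)$, $DX^\pm_i(s)D^{-1}=q^sX^\pm_i(s)$, $Dh_i(s)D^{-1}=q^sh_i(s)$, and centrality of $C^{\pm1/2}$), so they can always be moved to the middle slot. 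For the remaining generators one separates by sign of the loop index: $X^+_i(m)$ with $m\ge0$ and $X^+_i(m)$ with $m<0$ land in $\Uq^+(\gg)$ and $\Uq^+(\ll)$ respectively, and similarly for $X^-_i$ and $h_i(s)$, so every generator already lies in one of $\Uq(+)$ or $\Uq(-)$ (together with $\Uq^0$). One then needs the commutation relations $[X^+_i(m),X^-_j(n)]$, $[h_i(s),X^\pm_j(n)]$, $[h_i(m),h_j(n)]$ to reorder a product of the form (generator)$\cdot\Uq(-)\Uq^0\Uq(+)$ into the prescribed order $\Uq^+(\ll)\Uq^-(\ll)\Uq^0(\ll)\cdot\Uq^0\cdot\Uq^+(\gg)\Uq^-(\gg)\Uq^0(\gg)$. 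The crucial point making this work is that these relations never lower the "loop degree sign": e.g. $[h_i(s),X^\pm_j(n)]$ produces $X^\pm_j(n+s)$ times a central power of $C$, and $[X^+_i(m),X^-_j(n)]$ produces elements of $\Uq^0(\gg)\Uq^0$, $\Uq^0(\ll)\Uq^0$ or $\Uq^0$ according to the sign of $m-n$ — the $\phi^\pm_i$ series expand into products of $K_i^{\pm1}$ and $h_i(\pm s)$'s. So each straightening step stays inside the claimed span, and an induction on the length of a word in the generators (ordered appropriately, with a tie-breaking on total loop degree) finishes surjectivity.

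For the subalgebra claims, one shows $\widehat B=\Uq(0)\Uq^+(\gg)\Uq^-(\gg)\Uq^0(\gg)$ is closed under multiplication by a parallel but simpler argument: multiplying two such products, all the reordering moves needed stay within nonnegative (for $X^+$, $h$) or positive (for $X^-$) loop indices, because the relations among these generators do not create negatively-indexed ones — the bracket $[h_i(s),X^+_j(n)]$ with $s,n\ge0$ gives $X^+_j(n+s)$ with $n+s\ge0$, $[X^+_i(m),X^-_j(n)]$ with $m\ge0$, $n>0$ gives $\phi$-terms in nonnegative $h$'s, and so on; the Serre-type relations likewise preserve the index-sign. The same reasoning with the signs reversed handles $\widehat{\overline B}$. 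The main obstacle — the only place requiring genuine care rather than bookkeeping — is verifying that the $\phi^\pm_i$-expansions and the $q$-bracket Serre relations (in particular the mixed relation $[X^\pm_i(m+1),X^\pm_j(n)]_{q^{\pm a_{ij}}}+[X^\pm_j(n+1),X^\pm_i(m)]_{q^{\pm a_{ij}}}=0$, which shifts loop indices) are genuinely compatible with the chosen ordering, i.e. that every straightening rule one invokes rewrites a word as a linear combination of words that are no larger in the chosen well-founded order on monomials. Once the ordering is set up so that this holds, the proposition follows; note that only surjectivity of these multiplication maps is asserted here, not a PBW basis, so no linear-independence argument is needed.
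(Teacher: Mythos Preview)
The paper gives no proof whatsoever: the proposition is introduced with ``We have the following obvious result'' and nothing more. Your sketch of the standard straightening argument is therefore already more than the paper offers, and it is essentially correct. The result is well known for ordinary quantum affine algebras (cf.\ Beck--Chari--Pressley \cite{bcp99}), where the decomposition in question is identified, via the Drinfeld--Beck correspondence between the loop and Chevalley--Serre presentations, with the usual triangular decomposition relative to the affine simple roots $\alpha_0,\dots,\alpha_{M+N-1}$; the same mechanism carries over to the super case once one has Yamane's presentation \cite{y99}. Your closing observation that only surjectivity of the multiplication maps is asserted---not a PBW basis, so no linear-independence argument is required---is exactly right, and your identification of the index-shifting relation $[X^\pm_i(m+1),X^\pm_j(n)]_{q^{\pm a_{ij}}}+[X^\pm_j(n+1),X^\pm_i(m)]_{q^{\pm a_{ij}}}=0$ as the one place demanding genuine care in setting up the monomial order is accurate.
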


Note that $\widehat{B}$ is the quantised universal enveloping algebra of
the Borel subalgebra of $\widehat{\mathfrak{sl}}(M|N)$ given in \eqref{eq:borel-1}.
Thus this triangular decomposition of $\Uq$ is the quantum analogue of the
triangular decomposition of ${\rm U}(\widehat{\mathfrak{sl}}(M|N))$
with respect to the Borel subalgebra \eqref{eq:borel-1}.

\subsection{}
Let $\U_q$ be the extended quantum loop superalgebra, namely the quotient of $\Uq$ by the ideal
generated by $C^{\pm 1/2}-1$, and denote by $\U'_q$ the $\K$-subalgebra of $\U_q$
without the generators $D^{\pm 1}$.  Define the following subalgebras of $\U_q$:
\begin{itemize}
\item $\U_q(0)$ denotes the subalgebra generated by
$h_i(r)$, $K^{\pm 1}_i$, $D^{\pm 1}$ for all $i\in I$ and $0\ne r\in\Z$;
\item  $\U_q^+$
 (resp. $\U_q^-$)  denotes the subalgebra generated by
$X^+_i(m)$ for all $i\in I$ and $m\in \Z$ (resp. $X^-_i(m)$ for all $i\in I$ and $m\in \Z$),
\end{itemize}
and let $\U'_q(0)= \U_q(0)\cap \U'_q$, which is a subalgebra of $\U'_q$. Then
\begin{equation}\label{loop-tri-decomp}
\U_q=\U_q^-\U_q(0)\U^+_q, \quad \U'_q=\U_q^-\U'_q(0)\U^+_q.
\end{equation}
Define the following subalgebras of $\U_q$ and $\U'_q$ respectively:
\begin{equation}\label{eq:B-B'}
B:=\U_q(0)\U_q^+, \quad B':=\U'_q(0)\U_q^+.
\end{equation}
Then $B$ can be considered as the quantised universal enveloping superalgebra of the Borel subalgebra
of $\widehat{\mathfrak{sl}}(M|N)$ given in \eqref{eq:borel-2} without the central element,
and $B'$ is the subalgebra of $B$ without the generators $D^{\pm 1}$.
Thus the triangular decompositions \eqref{loop-tri-decomp} are quantum analogues of the
triangular decomposition of ${\rm U}(\widehat{\mathfrak{sl}}(M|N))$
with respect to the Borel subalgebra \eqref{eq:borel-2} of $\widehat{\mathfrak{sl}}(M|N)$.

Set $\D=\{\beta_{i j}:=\a_i+\a_{i+1}+\cdots +\a_j | i, j\in I, i\le j\}$ with the following total ordering
$\beta_{i, j} <\beta_{i', j'}$ if $i<i'$ or $i=i', j<j'$.
For $\beta_{i, j} \in \D$ and $n\in\Z$, define
\[
X^+_{i,j}(n)=[\cdots[[X^+_i(n), X^+_{i+1}(0)]_{q_{i+1}}, X^+_{i+2}(0)]_{q_{i+2}}, \cdots, X^+_j(0)]_{q_j},
\]
with the convention that $X^+_{i,i}(n)=X^+_i(n).$

\begin{proposition}\cite[Theorem 3.12]{zh13}\label{prop:PBW}
$\U_q^+$ is spanned by vectors of the form
\[
\prod\limits_{1\le a\le b\le M+N-1}^{\rightarrow}\left(\prod\limits_{i=1}^{c_{ab}}X^+_{a,b}(n_{ab,i})\right), \quad
c_{ab}\in\Z_{\ge 1}, \, n_{ab,i}\in\Z,
\]
where $\prod\limits^{\rightarrow}$ is the ordered product positioning $X^+_{a,b}(m)$ on the left of $X^+_{a',b'}(n)$
if $\beta_{a, b}<\beta_{a', b'}$.
\end{proposition}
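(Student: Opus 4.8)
The plan is to set up a straightening algorithm. Note first that Proposition \ref{prop:PBW} asserts only that the given ordered monomials \emph{span} $\U_q^+$, not that they are linearly independent; and since $X^+_{a,a}(n)=X^+_a(n)$, the elements $\{X^+_{a,b}(n)\mid \beta_{a,b}\in\D,\ n\in\Z\}$ already generate $\U_q^+$ as a $\K$-algebra. So $\U_q^+$ is spanned by arbitrarily ordered monomials in these generators, and the whole task is to rewrite an out-of-order monomial as a $\K$-linear combination of ordered ones, together with a well-foundedness argument ensuring the rewriting terminates.

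The essential step is a convexity lemma of Levendorskii--Soibelman type for the loop presentation. Working by induction on the interval length $b-a$, using the recursion $X^+_{a,b}(n)=[X^+_{a,b-1}(n),X^+_b(0)]_{q_b}$ together with the quantum Serre relations, the relations $[X^+_i(m),X^+_j(n)]=0$ for $a_{ij}=0$, and the relations $[X^+_i(m+1),X^+_j(n)]_{q^{a_{ij}}}+[X^+_j(n+1),X^+_i(m)]_{q^{a_{ij}}}=0$ for $a_{ij}\ne 0$, one shows that for roots $\beta_{a,b}>\beta_{a',b'}$ in the chosen order the super $q$-bracket $[X^+_{a,b}(m),X^+_{a',b'}(n)]_{q^{(\beta_{a,b},\beta_{a',b'})}}$ lies in the span of products $X^+_{\eta_1}(n_1)\cdots X^+_{\eta_s}(n_s)$ with $\beta_{a',b'}<\eta_1\le\cdots\le\eta_s<\beta_{a,b}$ and $n_1+\cdots+n_s=m+n$. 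Concretely one runs through the finitely many relative positions of the intervals $[a,b]$ and $[a',b']$ (separated, adjacent, overlapping, nested, or sharing an endpoint); the error terms involve only roots strictly between $\beta_{a',b'}$ and $\beta_{a,b}$ because the ordering on $\D$ — first index, then second index — is convex (if $\eta<\eta'$ are roots with $\eta+\eta'$ a root, then $\eta<\eta+\eta'<\eta'$).

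Granting this, the straightening is a routine Noetherian induction. To a monomial of fixed total root degree $\gamma\in Q^+$ and fixed total loop degree attach the pair $\mu=(\mu_1,\mu_2)$, where $\mu_1$ is the multiset of its constituent roots, written in decreasing order and compared lexicographically, and $\mu_2$ is the number of inversions relative to the target ordering of $\D$. If a monomial is not already ordered it contains an adjacent out-of-order factor $X^+_{a,b}(m)X^+_{a',b'}(n)$ with $\beta_{a,b}>\beta_{a',b'}$; applying the convexity lemma replaces it by a scalar multiple of $X^+_{a',b'}(n)X^+_{a,b}(m)$ — which leaves $\mu_1$ fixed and lowers $\mu_2$ by one — plus terms whose largest constituent root is strictly below $\beta_{a,b}$, hence of strictly smaller $\mu_1$. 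For fixed $\gamma$ there are only finitely many possible root multisets, so the value set of $\mu$ is well-ordered and the rewriting terminates in a $\K$-linear combination of ordered monomials; restricting to these gives the proposition.

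The main obstacle is the convexity lemma, and inside it the book-keeping forced by the odd index $M$. The composite root vectors $X^+_{a,b}(n)$ with $a\le M\le b$ are odd and isotropic (hence square to zero), the $q$-brackets carry the extra sign $(-1)^{|X||Y|}$, and the Serre relation at the middle of the diagram is the quartic one in $X^+_{M-1},X^+_M,X^+_{M+1}$ rather than a cubic one; deriving the commutation identities for two roots both meeting the odd strip, and checking that no error term contains an odd root vector squared at a single loop parameter, is the delicate part of the case analysis. A secondary combinatorial point is to verify that $\mu$ decreases under \emph{every} reduction at once — a Bergman-diamond-type consistency check which, for the present spanning statement, can be settled by inspecting the finitely many configuration types without proving full confluence.
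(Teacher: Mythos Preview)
The paper gives no proof of Proposition~\ref{prop:PBW}: it is quoted verbatim as \cite[Theorem~3.12]{zh13} and used as a black box. So there is no in-paper argument to compare yours against; what you have written is an outline of how one might reprove Zhang's result rather than a reconstruction of anything the present authors do.

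As an outline, your strategy is the standard one for affine PBW spanning statements (Levendorskii--Soibelman convexity plus a straightening induction), and the structural claims you make are correct: the ordering $\beta_{i,j}<\beta_{i',j'}\Leftrightarrow(i<i')\text{ or }(i=i',\,j<j')$ is convex for the type~$A$ root system, and the lexicographic multiset measure $(\mu_1,\mu_2)$ is well-founded on monomials of fixed weight $\gamma\in Q^+$. Two small comments. First, the worry that ``no error term contains an odd root vector squared at a single loop parameter'' is irrelevant for a spanning statement --- such a term is simply zero and disappears. Second, the Bergman-diamond remark is also unnecessary here: spanning requires only that \emph{some} rewriting path terminates in an ordered monomial, which the well-foundedness of $\mu$ already gives; no confluence or consistency check is needed.

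The genuine content you have not supplied is the convexity lemma itself: that $[X^+_{a,b}(m),X^+_{a',b'}(n)]_{q^{(\beta_{a,b},\beta_{a',b'})}}$ lies in the span of ordered products supported on roots strictly between $\beta_{a',b'}$ and $\beta_{a,b}$. You correctly identify this as the main obstacle and gesture at an induction on interval length together with a case analysis on the relative position of $[a,b]$ and $[a',b']$, but none of the cases are actually carried out, and in the super/affine setting the bookkeeping around the odd index $M$ (where the quartic Serre relation replaces the cubic one) is exactly where the work lies. So as a proof this is a plan rather than an argument; as a summary of what \cite{zh13} does, it is accurate in spirit.
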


\subsection{}\label{sect:int-module} All the modules for $\Uq$ and $\Uq'$
considered in this paper are assumed to be $\Z_2$-graded.
Given a $\Uq$-module $V$, let
\[
V_\mu=\{v\in V | D v=q^{(\mu, \omega_0)} v,  \
C^{\pm 1/2} v= q^{\pm\frac{1}{2}(\mu, \delta)} v,  \ K_i v=q_i^{(\mu, \alpha_i)}v, \  i\in I\}
\]
for any $\mu\in \widehat{\fh}^*$.
If $V_\mu\ne 0$, we say that $\mu$ is a weight of $V$, and denote by $P(V)$ the set of the weights.
The module $V$ is said to be {\it  a weight  module of type $1$} if
\[
V=\bigoplus_{\mu\in P(V)}V_\mu.
\]
From now on, all modules will be assumed to be of type $1$. A $\Uq$ module $V$ is {\it integrable}
if $V=\bigoplus_{\mu\in P(V)}V_\mu$, and  the elements $X^{\pm}_i(m)$  ($i\in I$, $m\in\Z$)
act locally nilpotently.  If $C^{\pm 1/2}$ act by the identity, we say that $V$ is a {\em zero-level} module,
or {\em at level $0$}.

\section{Zero-level integrable representations for $\Uq$}

In this section we classify  the irreducible integrable $\Uq$-modules with finite dimensional
weight spaces such that $C^{\pm 1/2}$ act as the identity. Such modules descend to $\U_q$-modules.

\subsection{}
Let $H$ (resp. $H'$) be the subalgebra of $\U_q$ generated by $K^{\pm 1}_i,  D^{\pm 1}, i\in I$
(resp. $K^{\pm 1}_i,  i\in I$).
A module $V$ of $\U_q$ (resp. $\U'_q$) is called a {\it highest
weight module} if there exists a nonzero weight vector $v\in V$ with respect
to $H$ (resp. $H'$) such that
\begin{enumerate}
\item
$\U_q v=V$ (resp. $\U'_qv=V$),
\item $X_i^+(m)v=0$ for all $i\in I$ and $m\in \Z$,  and
\item
$\U_q(0)v$ (resp.  $\U'_q(0)v$) is an irreducible
$\U_q(0)$-module (resp.  $\U'_q(0)$-module).
\end{enumerate}
Call $v$ a highest weight vector of $V$ relative to $B$ (resp. $B'$) as
$B v=\K v$ (resp. $B' v=\K v$).
These highest weight modules are defined relative to the triangular decompositions
for $\U_q$ (resp. $\U'_q$) defined by \eqref{loop-tri-decomp}.

Let $\psi: \U'_q(0)\rightarrow \K$ be any algebra homomorphism,
and let $\U'_q(0)$ act on the one dimensional vector
space $\K_\psi=\K$ by $\psi$.  We extend $\K_\psi$ to a module over $B'$ (cf. \eqref{eq:B-B'})
by letting $\U_q^+$ act trivially.
Construct the induced $\U'_q$-module
\[
M(\psi)=\U'_q\otimes_{B'}\K_\psi,
\]
which has a unique simple quotient:
\begin{eqnarray}\label{eq:ev-VPsi}
 V(\psi)=\text{the simple quotient $\U'_q$-module of $M(\psi)$}.
\end{eqnarray}

The following definition is taken from \cite{zh13}.
\begin{definition}\cite{zh13}
Let $\mathcal{R}_{M,N}$ be the set consisting of elements $(P, f, c, Q)$,  where
\begin{enumerate}
\item $f(z)=\sum\limits_{n\in\Z}f_nz^n\in\K[[z,z^{-1}]]$ is a formal series and
$Q(z)\in 1+z\K[z]$ is a polynomial such that
\[
Q(z)f(z)=0;
\]
\item $c\in \C\setminus \{0\}$ with $\frac{c-c^{-1}}{q-q^{-1}}=f_0$;
\item $P=(P_1,\dots, P_{M-1}, P_{M+1},\dots, P_{M+N-1})$ with $P_i\in 1+z\K[z]$.
\end{enumerate}
\end{definition}

With the help of results from \cite{zh13},
we can characterise the integrability of $V(\psi)$ as follows.

\begin{theorem}\label{thm:fd-module}
The following  are equivalent for the simple $\U'_q$-module $V(\psi)$ (cf. \eqref{eq:ev-VPsi}).
\begin{enumerate}
\item $V(\psi)$ is an integrable $\U'_q$-module with finite dimensional weight spaces.
\item
There exists $(P, f, c, Q)\in \mathcal{R}_{M,N}$ such that for any
 highest weight vector $v\in V(\psi)$,
\begin{eqnarray}
&&  X^+_i(n) v=0\quad \mbox{ for } i\in I, n\in\Z,\\
&& \label{Pi}
\psi\left(\sum\limits_{n\in\Z}\phi^{\pm}_i(n)z^n\right) v=q_i^{{\rm deg}P_i}\frac{P_i(zq^{-1}_i)}{P_i(zq_i)}v
\in\K[[z^{\pm 1}]],\quad  i\in I, i\ne M,\\
&&   \label{-}  (X^-_i(0))^{1+{\rm deg}P_i}v=0,\quad  i\in I, i\ne M,\\
&&\label{M-f}  \psi(K_M)v=cv,\quad \psi\left(\sum\limits_{n\in\Z}\frac{\phi^+_M(n)-\phi^-_M(n)}{q-q^{-1}}z^n\right)v=f(z)v,\\
&&\label{M-Q}\sum\limits_{s=0}^da_{d-s}X^-_M(s+r)v=0, \ \  \forall r\in\Z, \  \text{with $Q(z)=\sum\limits_{s=0}^da_sz^s$},
\end{eqnarray}
where \eqref{Pi}  is understood as an equation of Laurent series expanded
about $z =0$ for $\phi^+_i$ (resp.  $z =\infty$ for $\phi^-_i$).
\item  $V(\psi)$ is finite dimensional.
\end{enumerate}
\end{theorem}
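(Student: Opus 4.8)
The plan is to prove the cycle of implications $(3)\Rightarrow(1)\Rightarrow(2)\Rightarrow(3)$, exploiting the fact that $V(\psi)$ is, by construction, the unique simple quotient of the induced module $M(\psi)$ with one-dimensional $B'$-socle, so it is automatically a highest weight module in the sense defined above.

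The implication $(3)\Rightarrow(1)$ is the easy direction: if $V(\psi)$ is finite dimensional then every weight space is trivially finite dimensional, and every generator $X^\pm_i(m)$ acts as an endomorphism of a finite dimensional space on which the diagonalizable operators $K_i$ act with finitely many eigenvalues; since $X^\pm_i(m)$ shifts weights by $\pm\alpha_i$ and $V(\psi)$ has only finitely many weights, a high enough power of $X^\pm_i(m)$ must annihilate any given vector, so these generators act locally (indeed globally) nilpotently. Hence $V(\psi)$ is integrable.

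For $(1)\Rightarrow(2)$ I would argue as follows. Fix a highest weight vector $v$; conditions (2a) hold by definition of $V(\psi)$. Integrability with respect to $\U_q(\widehat{\mathfrak{sl}}(M))$ and $\U_{q^{-1}}(\widehat{\mathfrak{sl}}(N))$ forces, by the classical $\mathfrak{sl}_2$-type analysis of Chari--Pressley and Lusztig applied to each $i\ne M$, that $(X^-_i(0))$ acts nilpotently on $v$ and that the Drinfeld generating series $\psi\left(\sum_n\phi^\pm_i(n)z^n\right)$ is a ratio of the form $q_i^{\deg P_i}P_i(zq_i^{-1})/P_i(zq_i)$ for a polynomial $P_i\in 1+z\K[z]$; this yields \eqref{Pi} and \eqref{-}, with $\deg P_i$ equal to the nilpotency exponent minus one. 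The odd node $M$ is the delicate point: here there is no integrability constraint directly, but local nilpotency of $X^+_M(m)$ together with finite dimensionality of weight spaces constrains the eigenvalues of the $\phi^\pm_M$. I would invoke the machinery of \cite{zh13}: the pair $(\psi(K_M), \psi(\sum_n(\phi^+_M(n)-\phi^-_M(n))/(q-q^{-1})z^n))$ defines $(c,f)$, and the condition that $V(\psi)$ actually be \emph{finite dimensional} — equivalently, that the $X^-_M$ action be "rational" — is encoded by the existence of a polynomial $Q(z)\in 1+z\K[z]$ with $Q(z)f(z)=0$; this is precisely the content of \cite[Theorem 3.12]{zh13} and the surrounding classification, giving \eqref{M-f} and \eqref{M-Q}. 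The upshot is that the data $(P,f,c,Q)$ assembled this way lies in $\mathcal{R}_{M,N}$, because the compatibility $\frac{c-c^{-1}}{q-q^{-1}}=f_0$ is read off from the $n=0$ coefficient in \eqref{M-f} and $Q(z)f(z)=0$ holds by \eqref{M-Q}.

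Finally, $(2)\Rightarrow(3)$ is where I expect the main obstacle. Given that $v$ satisfies \eqref{Pi}--\eqref{M-Q}, one must show $V(\psi)=\U'_qv$ is finite dimensional. The strategy is to use the PBW-type spanning set of Proposition \ref{prop:PBW} for $\U_q^-$ (the analogous statement for negative root vectors) to write $V(\psi)$ as a span of monomials $\prod X^-_{a,b}(n_{ab,i})$ applied to $v$, and then to bound both the number of factors and the range of the indices $n_{ab,i}$. The exponent bound for the even root vectors $X^-_{a,b}$ with $a\le b<M$ or $M<a\le b$ comes from \eqref{-} together with the classical finite dimensional representation theory of $\U_q(\widehat{\mathfrak{sl}}(M))$ and $\U_{q^{-1}}(\widehat{\mathfrak{sl}}(N))$, which tells us these subalgebras act through finite dimensional quotients; the relations \eqref{Pi} control the Drinfeld polynomials of those quotients. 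The genuinely new work is controlling the odd root vectors $X^-_{a,b}$ straddling the node $M$: here \eqref{M-Q} gives the crucial recursion $\sum_{s=0}^d a_{d-s}X^-_M(s+r)v=0$, which expresses $X^-_M(n)v$ for all $n$ in terms of finitely many of them, and since $X^-_M(n)$ is odd, $(X^-_M(n))^2$ is a commutator and squares of odd root vectors are likewise bounded; combined with the Serre relations in Definition \ref{def:qaff} linking $X^-_{M-1}, X^-_M, X^-_{M+1}$ one propagates the bound to all straddling root vectors. Assembling these bounds shows only finitely many PBW monomials act nontrivially on $v$, so $\dim V(\psi)<\infty$. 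I anticipate that making the straddling-node bound precise — keeping careful track of how \eqref{M-Q} interacts with the mixed quantum Serre relations and the grading by $\widehat Q$ — will be the technically heaviest part, and it is exactly the place where the hypothesis "$M$ or $N$ equal to $1$" does \emph{not} yet enter (that restriction surfaces only at nonzero level, in Theorem \ref{thm:level}).
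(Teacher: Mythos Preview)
Your cycle $(3)\Rightarrow(1)\Rightarrow(2)\Rightarrow(3)$ is fine, and your treatment of $(3)\Rightarrow(1)$ and of the even nodes in $(1)\Rightarrow(2)$ matches the paper's. The real gap is at the odd node $M$ in $(1)\Rightarrow(2)$: you do not actually produce the polynomial $Q$. Your argument there reads ``the condition that $V(\psi)$ actually be finite dimensional \dots\ is encoded by the existence of a polynomial $Q(z)$ with $Q(z)f(z)=0$,'' and you cite \cite[Theorem~3.12]{zh13}. But at this stage you only have hypothesis~(1) --- integrability with finite dimensional weight spaces --- not finite dimensionality; invoking the equivalence with (3) to extract $Q$ is circular. (Also, Theorem~3.12 of \cite{zh13} is the PBW theorem, not the classification result.)

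The paper's argument for this step is direct and worth internalising. Since there is no $D$ in $\U'_q$, all the vectors $X^-_M(n)v$ for $n\in\Z$ lie in the \emph{same} weight space, which is finite dimensional by hypothesis; hence there is a nontrivial linear relation $\sum_{s=0}^d a_{d-s}X^-_M(s+m)v=0$ for some fixed $m$ and scalars $a_s$ with $a_0=1$. Now apply $h_{M-1}(r)$ and use $[h_{M-1}(r),X^-_M(k)]=\frac{[r]_q}{r}X^-_M(k+r)$ together with $h_{M-1}(r)v\in\K v$: the relation shifts to $\sum_{s=0}^d a_{d-s}X^-_M(s+m+r)v=0$ for every $r\in\Z$, giving \eqref{M-Q}. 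Finally apply $X^+_M(0)$ and use $[X^+_M(0),X^-_M(n)]v=f_n v$ to obtain $\sum_s a_{d-s}f_{s+m+r}=0$ for all $r$, i.e.\ $Q(z)f(z)=0$. No appeal to finite dimensionality of $V(\psi)$ is needed.

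On $(2)\Rightarrow(3)$: you flag this as the main obstacle and sketch a PBW-monomial bounding argument. The paper does not prove this implication at all; it simply cites \cite[Theorem~4.5]{zh13}, which is the classification of finite dimensional simple $\U'_q$-modules. Your sketch is in the right spirit, but since the paper treats this as an external input, the work you anticipate here is precisely the content of \cite{zh13} rather than of this theorem.
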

\begin{proof}
$(1)\Rightarrow  (2)$.  When $i\ne M$,  let $\U_q^{(i)}$ be the $\U_{q_i}(\widehat{\mathfrak{sl}}(2))$
subalgebra generated by
\[X^{\pm}_i(n),  K^{\pm 1}_i,  h_i(r),  \quad n, r\in\Z,  r\ne 0.
\]
Then $\U_q^{(i)}v$ is an integrable highest weight $\U_q^{(i)}$-module.
By \cite[Theorem 3.4]{cp91}, there exists a polynomial $P_i\in 1+z\K[z]$
satisfying \eqref{Pi} and \eqref{-}.

When $i=M$, there exist $c\in \C\setminus \{0\}, f_n\in\K, n\in \Z$ such that
\begin{eqnarray*}
&&K_Mv=cv,\\
&&\frac{\phi^+_M(n)-\phi^-_M(n)}{q-q^{-1}}v
= \psi\left(\frac{\phi^+_M(n)-\phi^-_M(n)}{q-q^{-1}}\right)v=f_nv,\quad n\ne 0,\\
&&\frac{\phi^+_M(0)-\phi^-_M(0)}{q-q^{-1}}v
= \psi\left(\frac{K_M-K^{-1}_M}{q-q^{-1}}\right)v=f_0v.
\end{eqnarray*}
Since $X^-_M(n)v$ for all $n\in\Z$ belong to the same weight space of $V(\psi)$, and all
weight spaces are finite dimensional,
there exist $m\in\Z, d\in\Z_{\ge 0}$ and $a_0,\dots,a_d\in\K$ satisfying $a_d\ne 0$ and $a_0=1$
such that
\[
\sum\limits_{s=0}^da_{d-s}X^-_M(s+m)v=0.
\]
Applying $h_{M-1}(r)$ to the above equality, we obtain
\begin{eqnarray*}
0&=&\sum\limits_{s=0}^da_{d-s}[h_{M-1}(r), X^-_M(s+m)]v
    +\sum\limits_{s=0}^da_{d-s}X^-_M(s+m)h_{M-1}(r)v\\
&=&\sum\limits_{s=0}^da_{d-s}\frac{[r]_q}{r}X^-_M(s+m+r)v
    + \sum\limits_{s=0}^da_{d-s}X^-_M(s+m)\psi(h_{M-1}(r))v\\
&=&\frac{[r]_q}{r}\left(\sum\limits_{s=0}^da_{d-s}X^-_M(s+m+r)v\right).
\end{eqnarray*}
Hence, $\sum\limits_{s=0}^da_{d-s}X^-_M(s+m+r)v=0$ and  \eqref{M-Q} holds.

Applying $X^+_M(0)$ to $\sum\limits_{s=0}^da_{d-s}X^-_M(s+m+r)v=0$,  we have
\begin{eqnarray*}
X^+_M(0)\sum\limits_{s=0}^da_{d-s}X^-_M(s+m+r)v&=&
\sum\limits_{s=0}^da_{d-s}\frac{\phi^+_M(s+m+r)-\phi^-_M(s+m+r)}{q-q^{-1}}v\\
&=&\sum\limits_{s=0}^da_{d-s}f_{s+m+r}v=0,
\end{eqnarray*}
which implies that  $Q(z)(\sum_{n\in\Z}f_nz^n)=0,$ where $Q(z)=\sum_{s=0}^da_sz^s.$

$(2)\Rightarrow (3)$.  This was established in \cite[Theorem 4.5]{zh13}, which is a key result in the
classification of finite dimensional simple $\U'_q$-modules.

$(3)\Rightarrow (1)$. Clear.
\end{proof}
\begin{definition}\label{def:fd-module}
We will denote  by $V(P, f, c, Q)$ the $\U'_q$-module $V(\psi)$ corresponding to
$(P, f, c, Q)\in \mathcal{R}_{M,N}$ in Theorem \ref{thm:fd-module}, and call $P$,
$f$ and $Q$ the highest weigh polynomials of  $V(P, f, c, Q)$.
\end{definition}

Note that  $f$ is a formal Laurent series in general.

\subsection{ }
The following result is \cite[Lemma 1.4]{cg03}.
\begin{lemma}\label{Cartan}
Let $\chi: \U'_q(0)\rightarrow \K[t,t^{-1}]$ be a nontrivial homomorphism of $\Z$-graded algebras.
Then there exists a unique $r>0$ such that the image of $\chi$ equals to $\K[t^r,t^{-r}].$
\end{lemma}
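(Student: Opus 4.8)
The plan is to follow the standard argument for graded algebra homomorphisms into $\K[t,t^{-1}]$, adapted to the structure of $\U'_q(0)$. First I would recall that $\U'_q(0)$ is generated by $K_i^{\pm 1}$ and the $h_i(s)$ for $s\neq 0$, where $h_i(s)$ sits in degree $s$ with respect to the $\Z$-grading (since $Dh_i(s)D^{-1}=q^sh_i(s)$), while the $K_i^{\pm 1}$ are in degree $0$. Thus the image of $\chi$ is a $\Z$-graded subalgebra of $\K[t,t^{-1}]$ generated by certain elements $\chi(K_i^{\pm1})\in\K^\times$ (degree $0$) and the Laurent monomials $\chi(h_i(s))=\mu_{i,s}t^s$ for scalars $\mu_{i,s}\in\K$. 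Let $S\subseteq\Z$ be the set of integers $n$ such that $\chi(h_i(n))\neq 0$ for some $i\in I$; nontriviality of $\chi$ means $S\neq\emptyset$ (if $S=\emptyset$ the image lands in $\K$, hence is not all of any $\K[t^r,t^{-r}]$, contradicting that a graded homomorphism onto a graded ring must hit nonzero degrees — here one must be slightly careful about what ``nontrivial'' means; I would take it to mean the image is not contained in $\K$).

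Next I would set $r=\gcd\big(\{|n| : n\in S\}\big)$ and argue the image of $\chi$ equals $\K[t^r,t^{-r}]$. One inclusion is easy: every generator of the image lies in $\K[t^r,t^{-r}]$ since each nonzero $\chi(h_i(n))$ has $n$ a multiple of $r$ and the $\chi(K_i^{\pm1})$ are constants, so the subalgebra they generate is inside $\K[t^r,t^{-r}]$. For the reverse inclusion, the key input is the commutation relation
\[
[h_i(m),h_j(n)]=\d_{m+n,0}\frac{[ml_ia_{ij}]_{q_i}(C^m-C^{-m})}{m(q_i-q_i^{-1})},
\]
which in $\U'_q$ (where $C^{\pm1/2}=1$) makes the $h_i(s)$ pairwise commute, so the image is a commutative graded domain. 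The point is that to realize $t^r$ and $t^{-r}$ one uses products and $\K$-linear combinations of the $\chi(h_i(n))t^0$-shifts: by a standard number-theoretic argument (Bézout), $r$ is an integer combination of elements of $S$, and correspondingly $t^r$ (resp.\ $t^{-r}$) is obtained as a product of powers of the monomials $\chi(h_i(n))=\mu_{i,n}t^n$ for $n\in S$, after dividing by the appropriate nonzero scalars $\mu_{i,n}$. Since each such monomial is in the image and the image is a subalgebra, $t^{\pm r}$ lie in the image, hence $\K[t^r,t^{-r}]\subseteq\mathrm{im}(\chi)$.

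I would then note uniqueness of $r$: if $\K[t^{r},t^{-r}]=\K[t^{r'},t^{-r'}]$ with $r,r'>0$, comparing the minimal positive degree appearing in each ring forces $r=r'$. Finally I should address the subtlety that Bézout gives $r$ as a $\Z$-combination, possibly with negative coefficients, of positive elements of $S$; but since $S$ is symmetric up to sign only if we include negative $n$ — actually $S$ as defined may contain both positive and negative integers, and in any case $\gcd(S)=\gcd(|S|)$, and a $\Z$-linear combination equal to $r$ translates into a Laurent monomial $t^r$ via multiplying monomials $t^{n}$ and $t^{-n}=(t^n)^{-1}\cdot(\mu_{i,n}^{-1}\mu_{i,-n}^{-1})\cdot(\mu_{i,-n}t^{-n})$ — i.e.\ one uses that whenever $t^n$ is in the image so is $t^{-n}$, because some $h_i(-n)$ must also have nonzero image (by the nondegeneracy of the relevant Cartan pairing, or more simply because $h_i(n)$ and $h_i(-n)$ scale together under the algebra structure — here I would just invoke that $\U'_q(0)$ contains $h_i(-n)$ and a graded homomorphism onto $\K[t^r,t^{-r}]$ must be surjective in degree $-r$ too). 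The main obstacle is precisely this bookkeeping: ensuring that the set of degrees realized with \emph{nonzero} coefficient is closed under negation and under the gcd operation, so that the elementary lattice/Bézout argument actually produces $t^{\pm r}$ rather than merely showing divisibility of degrees by $r$. Everything else is routine once the grading and the commutativity of the $h_i(s)$ in $\U'_q$ are in hand; I would cite \cite{cg03} for the analogous classical statement and adapt the proof verbatim.
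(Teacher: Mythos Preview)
The paper does not prove this lemma at all: it simply records it as \cite[Lemma~1.4]{cg03}. So your closing remark --- cite \cite{cg03} and adapt verbatim --- is literally what the paper does, and had you stopped there you would match the paper.

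Your attempted direct argument, however, has a genuine gap exactly where you yourself flag it. In $\U'_q(0)$ (with $C^{\pm1/2}=1$) the elements $h_i(s)$ all commute and there is no relation linking $h_i(n)$ to $h_i(-n)$; up to the Laurent part in the $K_i$ the algebra is a free commutative algebra on the $h_i(s)$. Hence nothing forces $\chi(h_i(-n))\ne 0$ whenever $\chi(h_i(n))\ne 0$: your appeals to a ``nondegenerate Cartan pairing'' or to ``$h_i(n)$ and $h_i(-n)$ scaling together'' are not valid once $C=1$. Concretely, the graded algebra map sending every $K_i\mapsto 1$, $h_i(1)\mapsto t$, and $h_i(s)\mapsto 0$ for $s\ne 1$ is a nontrivial $\Z$-graded homomorphism whose image is $\K[t]$, not any $\K[t^r,t^{-r}]$. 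So the B\'ezout step cannot be rescued on the hypotheses as you read them, and the set $S$ of nonzero degrees need be neither symmetric under negation nor a subgroup.

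What is actually used in the paper (see the proof of Theorem~\ref{thm:class}) is the case where the kernel of $\chi$ is a \emph{maximal} graded ideal, equivalently where the image $A=\mathrm{im}(\chi)$ is graded-simple. Under that hypothesis the argument is immediate and does not need B\'ezout: since $A$ is commutative and graded-simple, any nonzero homogeneous $a\in A_n$ generates the graded ideal $aA=A$, so there is $b\in A_{-n}$ with $ab=1$; thus the set of degrees occurring in $A$ is a subgroup $r\Z\subset\Z$, and since $A\subset\K[t,t^{-1}]$ each graded piece is at most one-dimensional, giving $A=\K[t^r,t^{-r}]$. The looseness lies in the paper's wording of ``nontrivial'' rather than in your reading, but as written your proof does not go through.
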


Let $\tilde\varphi : \U'_q(0)\rightarrow L:=\K[t,t^{-1}]$ be a $\Z$-graded algebra homomorphism
such that $\tilde\varphi(C^{\pm 1/2})=1$ and $\tilde\varphi(K_i^{\pm 1})\in
\K\setminus  \{0\}$. Then for any given $b\in\C$, we can turn $L$ into
a $\U_q(0)$-module via $\tilde\varphi$ defined for all $f\in L$ by
\begin{eqnarray}\label{eq:T-mod}
D f = q^{t\frac{d }{d t} + b}f, \quad x f=\tilde\varphi(x) f, \quad x\in \U'_q(0).
\end{eqnarray}
We write $\varphi=(\tilde\varphi, b)$ and denote by $L_\varphi$ the image of  $\tilde\varphi$
regarded as a $\Z$-graded $\U_q(0)$-submodule. Then $L_\varphi$ is $L_0:=\K$ or a Laurent subring
$L_r:=\K[t^r,t^{-r}]$ for some integer $r> 0$. This follows from Lemma \ref{Cartan}.

Assume that  $L_\varphi$ is a simple $\U_q(0)$-module.  We extend $L_\varphi$ to a module over $B$ (cf. \eqref{eq:B-B'})
with $\U_q^+$ acting trivially, and construct the induced $\U_q$-module
\begin{eqnarray}\label{eq:Verma}
M(\varphi)=\U_q\otimes_{B}L_\varphi.
\end{eqnarray}
This has a unique irreducible quotient, which we denote by $V(\varphi)$.  Then every
irreducible highest weight $\U_q$-module is isomorphic to some $V(\varphi)$.

Given any simple $\U'_q$-module $ V(\psi)$ (cf. \eqref{eq:ev-VPsi}), we
form the vector space $V(\psi)\otimes L$ and denote $w(s)=w\otimes t^s$
for any $w\in V(\psi)$ and $s\in \Z$.  For any $b\in\C$, we now turn $V(\psi)\otimes L$ into a
$\U_q$-module by defining the action
\begin{eqnarray}\label{L-module}
\begin{aligned}
& C^{\pm 1/2} w(s)=w(s), \quad D w(s)=q^{s+b} w(s),\\
& xw(s)=(xw)(s+m), \quad x\in (\U_q)_m.
\end{aligned}
\end{eqnarray}
We denote this $\U_q$-module by $L(V(\psi); b)$
and call it the {\em quantum loop module associated to $V(\psi)$ and $b$}.
Then  $V(\psi)$ is an integrable $\U'_q$-module if and only if  $L(V(\psi);b)$ is an integrable $\U_q$-module.
\begin{theorem}\label{iso}
Let $V(\varphi)$ be a $\Uq$-module such that $L_\varphi\cong L_r$ is
an irreducible $\U'_q(0)$-module.
Define $\psi=S\circ \tilde\varphi: \U'_q(0)\rightarrow \K$
with $S: L \rightarrow \K$, $t\mapsto 1$, being  the evaluation map.
Let  $v$ be a highest weight vector of $V(\psi)$ and denote
$v(i)=v\otimes t^i$ for any $i\in \Z$. Then
\begin{enumerate}
\item
$V(\psi)\otimes L\cong \oplus_{i=0}^{r-1}\U_q v(i)$ as
$\U_q$-modules, where $\U_q$-submodules $\U_q v(i)$ are simple.
Furthermore, $\U_q v(0)\cong V(\varphi)$.
\item $V(\varphi)$ has finite dimensional weight spaces with respect
to $H$ if and only if $V(\psi)$ has finite
dimensional weight spaces with respect to $H'$.
\end{enumerate}
\end{theorem}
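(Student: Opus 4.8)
The plan is to build on the quantum loop module $L(V(\psi);b)$ and the decomposition stated in Lemma \ref{Cartan}. First I would note that since $\tilde\varphi$ is a $\Z$-graded algebra homomorphism with image $L_r = \K[t^r,t^{-r}]$, the module $L_\varphi$ used in constructing $M(\varphi)$ is precisely $L_r$ sitting inside $L = \K[t,t^{-1}]$, and $L = \bigoplus_{i=0}^{r-1} t^i L_r$ as a $\Z$-graded $\U_q(0)$-module, with each $t^i L_r$ simple. Since $\psi = S\circ\tilde\varphi$ and the action of $\U_q^+$ on $V(\psi)\otimes L$ in \eqref{L-module} shifts the loop grading by the degree, the highest weight vector $v$ of $V(\psi)$ together with $v(i)=v\otimes t^i$ generates $\U_q v(i)$, and one checks directly from \eqref{L-module} that $B v(i) = \K v(i)$ with $\U_q^+$ killing $v(i)$; moreover $\U_q(0) v(i) = t^i L_r \cdot v$, which is simple. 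Hence each $\U_q v(i)$ is a highest weight $\U_q$-module in the sense of the definition preceding \eqref{eq:Verma}, so it is a quotient of $M(\varphi_i)$ for the appropriate twist $\varphi_i=(\tilde\varphi, b+i)$, and $\U_q v(0)$ is a quotient of $M(\varphi)$.

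Next I would prove the direct sum decomposition $V(\psi)\otimes L = \bigoplus_{i=0}^{r-1}\U_q v(i)$. The inclusion $\supseteq$ is clear. For $\subseteq$, observe that $V(\psi) = \U'_q v$ is generated over $\U'_q$ by the highest weight vector, so every $w\in V(\psi)$ is a sum of vectors $xv$ with $x\in(\U'_q)_m$ homogeneous; then $w(s) = xv(s-m+? )$ worked out via \eqref{L-module} lies in $\U_q v(j)$ for $j\equiv s \pmod r$ once we track the loop grading carefully, using that the loop grading on $V(\psi)\otimes L$ picks up contributions only from the $L$-factor modulo the $\Z/r$ periodicity forced by $\tilde\varphi$ having image $L_r$. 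Because the $r$ subspaces $\U_q v(i)$ live in distinct residue classes of the loop $\Z$-grading, their sum is direct, and each is simple: a nonzero submodule of $\U_q v(i)$ contains a highest weight vector (by integrability-free generalities on highest weight modules with simple $\U_q(0)$-top, or by the standard argument that the weight spaces above the top are exhausted), which must be a nonzero scalar multiple of $v(i)$ since $\U_q(0) v(i)$ is already simple; hence the submodule is everything. This identifies $\U_q v(0)$ with the unique simple quotient $V(\varphi)$ of $M(\varphi)$, giving part (1).

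For part (2), I would combine part (1) with the weight-space bookkeeping. Under the isomorphism of part (1), the $H$-weight spaces of $V(\varphi)\cong\U_q v(0)$ are obtained from the $H'$-weight spaces of $V(\psi)$ by the loop-shift in \eqref{L-module}: a vector $w(s)$ has the same $K_i$-eigenvalues as $w$ and $D$-eigenvalue $q^{s+b}$ times a fixed factor, so for each $H$-weight $\mu$ of $V(\varphi)$ there is a single $H'$-weight $\bar\mu$ of $V(\psi)$ and a single loop degree $s$ with $(V(\varphi))_\mu \cong (V(\psi))_{\bar\mu}\otimes t^s$. Thus $\dim (V(\varphi))_\mu = \dim (V(\psi))_{\bar\mu}$, and conversely every $H'$-weight space of $V(\psi)$ appears this way (spread across the loop degrees in its residue class mod $r$); finiteness of all weight spaces on one side is therefore equivalent to finiteness on the other. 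The main obstacle I anticipate is the careful tracking of the $\Z$-grading in \eqref{L-module} versus the image lattice $L_r$ to get the directness and the correct indexing of the summands by $\Z/r$; the simplicity of each $\U_q v(i)$ is then a routine highest-weight argument given that its $\U_q(0)$-top is simple by hypothesis.
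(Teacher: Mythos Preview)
The paper itself does not give a proof: it simply cites Rao's arguments in \cite{r95,r01} and says they carry over verbatim. So your attempt is already more than the paper provides, and your overall plan (identify $\U_q(0)v(i)$ with the shifted copy of $L_r$, show each $\U_q v(i)$ is a simple highest weight module, sum them up to get $V(\psi)\otimes L$) is the right one and is essentially Rao's approach.

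There is, however, a genuine gap in two places. First, your directness argument rests on the claim that ``the $r$ subspaces $\U_q v(i)$ live in distinct residue classes of the loop $\Z$-grading.'' This is false: for $x\in(\U'_q)_m$ one has $x\cdot v(i)=(xv)(i+m)$, and as $m$ ranges over $\Z$ the vector $xv$ ranges over all of $V(\psi)$, so $\U_q v(i)$ typically meets \emph{every} loop degree, not just those $\equiv i\pmod r$. Second, your simplicity argument assumes that a nonzero submodule of $\U_q v(i)$ contains a highest weight vector lying in the top $H'$-weight space $\U_q(0)v(i)$. But a priori a highest weight module can have singular vectors at strictly lower $H'$-weights; ruling this out is essentially the simplicity you are trying to prove, so the reasoning is circular.

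Both gaps are repaired by the key device in Rao's proof: the evaluation map $\pi\colon V(\psi)\otimes L\to V(\psi)$, $w\otimes t^n\mapsto w$, which is a $\U'_q$-module homomorphism. Given a nonzero $\U_q$-submodule $W\subseteq \U_q v(i)$, the $D$-action forces $W$ to be loop-graded; picking any nonzero $w'\otimes t^n\in W$ and using simplicity of $V(\psi)$, one finds $u\in(\U'_q)_m$ with $uw'=v$, hence $v(n+m)\in W$. A short weight argument shows $v(n')\in\U_q v(i)$ forces $n'\equiv i\pmod r$ (the weight-zero part of $\U_q^-$ is just scalars), so $\U_q v(n+m)=\U_q v(i)\subseteq W$, proving simplicity. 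Directness then follows because the simple modules $\U_q v(0),\dots,\U_q v(r-1)$ are pairwise non-isomorphic: their top $\U_q(0)$-modules have disjoint $D$-spectra $\{q^{b+i+kr}:k\in\Z\}$. Your argument for part (2) is fine once part (1) is in place.
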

\begin{proof}
The proofs of \cite[Theorem 1.8]{r95} and
\cite[Lemma 1.10]{r01} can be adopted verbatim to prove this result.
We refer the interested readers to the paper \cite{r95, r01} for details.
\end{proof}

We note that $\U_q v(i)\cong V(\tilde\varphi, b+i)$. In the case $r=0$, the formula in part (1)
of the theorem should be understood as $V(\psi)\otimes L\cong \oplus_{i\in\Z}\U_q v(i)$.

Given any nonzero simple $\U_q$-submodule $\U_q v(i)\subseteq L(V(\psi), b)$, we define the
{\em evaluation module} for $\U'_q$ by setting $t=1$:
\begin{eqnarray}\label{evaluat}
\U_q v(i) \longrightarrow V(\psi), \quad w(s)\mapsto w.
\end{eqnarray}
This is a $\U'_q$-module homomorphism, which is surjective.

\subsection{ }
Let $V$ be an irreducible integrable $\U_q$-module with finite dimensional weight spaces.
In this section we generalize the method developed in \cite{wz14}
to  show that $V$ has to be  a highest weight module with respect to the
triangular decomposition  of $\U_q$ given in \eqref{loop-tri-decomp}.

Introduce the set $S=\{(a,b)|1\le a\le M\le b \le M+N-1, a<b\}$,
and order the elements so that $(a,b)>(a',b')$ if and only if
$b-a>b'-a'$ or $b-a=b'-a'$, $a<a'.$

We have the following lemmas,
which play a key role in the remainder of the section. Their proofs
are relegated to Appendices \ref{pf:key-1} and \ref{pf:key-2} as
they involve very lengthy computations.
\begin{lemma}\label{key-1}
Let $v_{a-1,b-1}$ be a weight vector in $V$. For any $n_1,\dots, n_p\in\Z$ and $p\in\Z_{>0}$, denote
\begin{eqnarray*}
&& v_{a,b}:=X^+_{a,b}(n_p)\cdots X^+_{a,b}(n_1)v_{a-1,b-1},  \mbox{ \ if \ } b-1\ne M+N,\\
&&({\rm resp. } \  v_{1,b-a}:=X^+_{1,b-a}(n_p)\cdots X^+_{1,b-a}(n_1)v_{a-1,b-1}, \mbox{ \ if \ } b-1= M+N).
\end{eqnarray*}
If
$
X^+_i(m)v_{a-1,b-1}=X^+_{k,l}(m)v_{a-1,b-1}=0$, $\forall i\ne M, m\in\Z, (k,l)>(a-1,b-1),
$
then
\begin{eqnarray*}
&&X^+_i(m)v_{a,b}=X^+_{k,l}(m)v_{a,b}=0 \quad \forall m\in \Z, (k,l)> (a,b), i\ne M,\\
&&({\rm resp. }\    X^+_i(m)v_{1,b-a}=X^+_{k,l}(m)v_{1,b-a}=0 \quad \forall m\in \Z, (k,l)> (1,b-a)).
\end{eqnarray*}
\end{lemma}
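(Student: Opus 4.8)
The plan is to prove Lemma~\ref{key-1} by induction on the number $p$ of creation operators $X^+_{a,b}(n_k)$ applied to $v_{a-1,b-1}$, reducing everything to the case $p=1$, where one applies a single $X^+_{a,b}(n)$. For the base case, I first observe that by the definition of the root vectors $X^+_{i,j}(n)$ as iterated $q$-brackets and by Proposition~\ref{prop:PBW}, the space $\U_q^+$ is spanned by ordered monomials in the $X^+_{a,b}(m)$; hence it suffices to check that each generator $X^+_i(m)$ (for $i\ne M$) and each $X^+_{k,l}(m)$ with $(k,l)>(a,b)$ annihilates $X^+_{a,b}(n)v_{a-1,b-1}$. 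The standard move is to commute the offending operator past $X^+_{a,b}(n)$: writing $X^+_i(m)X^+_{a,b}(n) = [X^+_i(m),X^+_{a,b}(n)]_{q^{\pm}} + (\pm)q^{\pm}X^+_{a,b}(n)X^+_i(m)$ (with the appropriate sign dictated by parities), the second term dies because $X^+_i(m)v_{a-1,b-1}=0$ by hypothesis, so the whole question is whether the commutator term, applied to $v_{a-1,b-1}$, lands in the span of vectors of the form $X^+_{k',l'}(m')v_{a-1,b-1}$ with $(k',l')>(a-1,b-1)$ — which vanish by hypothesis — possibly together with a term proportional to $X^+_{a,b}$-type vectors that we are allowed to keep.

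The heart of the matter is therefore a careful analysis, inside the loop algebra $\U_q^+$, of the commutators $[X^+_i(m), X^+_{a,b}(n)]$ and $[X^+_{k,l}(m),X^+_{a,b}(n)]$, using the loop relations of Definition~\ref{def:qaff}: the Drinfeld relations $[X^\pm_i(m+1),X^\pm_j(n)]_{q^{\pm a_{ij}}} + [X^\pm_j(n+1),X^\pm_i(m)]_{q^{\pm a_{ij}}}=0$ when $a_{ij}\ne 0$, the vanishing $[X^\pm_i(m),X^\pm_j(n)]=0$ when $a_{ij}=0$, and the Serre-type relations (including the special four-term relation involving $X^+_{M-1},X^+_M,X^+_{M+1},X^+_M$). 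The key structural fact to extract is that such a commutator is, modulo the relations, a combination of longer root vectors $X^+_{k',l'}(m')$ with $\beta_{k',l'}>\beta_{a-1,b-1}$ in the ordering on $\Delta$ (equivalently $(k',l')>(a-1,b-1)$ in the ordering on $S$ after matching conventions), so that when applied to $v_{a-1,b-1}$ the result vanishes by hypothesis. The split into the two cases ``$b-1\ne M+N$'' and ``$b-1=M+N$'' (where the long root wraps around so one passes to $X^+_{1,b-a}$) reflects exactly which longer root vectors appear. The reduction from general $p$ to $p=1$ is then a second, outer induction: applying $X^+_i(m)$ to $X^+_{a,b}(n_p)\cdots X^+_{a,b}(n_1)v_{a-1,b-1}$, commute $X^+_i(m)$ to the right one factor at a time, at each step producing either a term that continues inward or a commutator $[X^+_i(m),X^+_{a,b}(n_k)]$ which, by the $p=1$ analysis applied to the vector $X^+_{a,b}(n_{k-1})\cdots X^+_{a,b}(n_1)v_{a-1,b-1}$ (which satisfies the inductive hypothesis by the induction on $p$), is handled.

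The main obstacle I expect is the bookkeeping in the commutator analysis: the root vectors $X^+_{a,b}(n)$ are defined by a specific left-nested bracketing, so computing $[X^+_i(m),X^+_{a,b}(n)]$ requires repeatedly invoking a graded Jacobi identity together with the Drinfeld relations, and one must track the $q$-powers, the $\Z_2$-signs coming from the odd generator $X^+_M$, and — crucially — verify that no \emph{shorter} or \emph{incomparable} root vector is generated (only strictly longer ones, or multiples of $X^+_{a,b}$-type vectors one is permitted to retain). The odd case $i=M$ or root vectors straddling position $M$ is where the signs and the exceptional Serre relation intervene, and this is the source of the lengthy computations the authors defer to Appendix~\ref{pf:key-1}. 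My plan would be to isolate a single combinatorial lemma — ``$[X^+_i(m),X^+_{a,b}(n)]$ lies in the span of $\{X^+_{k,l}(m')\}_{(k,l)>(a-1,b-1)}$ plus scalar multiples of $X^+_{a',b'}(m')$ with $(a',b')\ge(a,b)$'' — prove it by induction on $b-a$ using the iterated-bracket definition, and then feed it into the two nested inductions described above.
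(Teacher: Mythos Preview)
Your inductive frame --- reduce to a single factor $X^+_{a,b}(n_t)$ and analyse the commutator with $X^+_i(m)$ or $X^+_{k,l}(m)$ acting on the inductive vector $v^{(t-1)}_{a,b}$ --- is exactly the paper's scheme, so at that level you are aligned with Appendix~\ref{pf:key-1}. The gap is in your proposed ``combinatorial lemma''. The commutator $[X^+_i(m),X^+_{a,b}(n)]$ is \emph{not} a linear combination of single root vectors $X^+_{k',l'}(m')$ in $\U_q^+$; it is a sum of genuine \emph{products}. For example, in the case $i=a-1$ the relation \eqref{ii+1} gives
\[
[X^+_{a-1}(m),X^+_{a,b}(n)]_{q_a}
= q_a^{\,n}\,X^+_{a-1,b}(m+n)
+\sum_s c_s\,X^+_{a,b}(n\mp s)\,X^+_{a-1}(m\pm s),
\]
and in the case $i=a$ one gets terms like $X^+_a(m)\,X^+_{a+1,b}(\pm s)\,X^+_a(n\mp s)$, where $X^+_{a+1,b}$ is \emph{shorter}, not longer, than $X^+_{a,b}$. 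What makes these product terms die on $v^{(t-1)}_{a,b}$ is not that they are built from long root vectors, but that their \emph{rightmost} factor is an $X^+_j(\cdot)$ with $j\ne M$ (or, in the $(k,l)$-cases, an $X^+_{k',l'}(\cdot)$ with $(k',l')>(a,b)$), which lies in the annihilator of $v^{(t-1)}_{a,b}$ furnished by the induction hypothesis. So the correct form of your lemma is not an identity about spans of root vectors in $\U_q^+$, but rather: the commutator lies in the left ideal $\U_q^+\cdot\mathcal J$, where $\mathcal J$ is spanned by the $X^+_j(m)$ ($j\ne M$) together with the $X^+_{k',l'}(m)$ with $(k',l')>(a,b)$.

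Proving that left-ideal statement is precisely the content of Appendix~\ref{pf:key-1}, and it does not collapse to a clean induction on $b-a$. The paper treats each position of $i$ relative to $[a,b]$ and to $M$ separately ($i<a-1$, $i=a-1$, $i=a$, $a<i<M$, $M<i<b$, $i=b$, $i=b+1$, $i>b+1$), in each case using a different combination of \eqref{ii}, \eqref{ii+1}, \eqref{>M} and Lemma~\ref{lem:relat} to push the unwanted factor to the right; and then repeats a parallel case analysis for $X^+_{k,l}(m)$ with $(k,l)>(a,b)$, splitting on the relative positions of $k,l,a,b,M$. Your plan survives if you replace the ``linear span of longer root vectors'' formulation by the left-ideal formulation above --- but be aware that establishing it \emph{is} the case-by-case computation, not a route around it.
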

\begin{proof} See Appendix \ref{pf:key-1} .
\end{proof}
\begin{lemma}\label{key-2}
For $ (a, b)\in S$, let $v_{a,b}$  be a weight vector in $V$ such that
\begin{eqnarray}\label{2}
\begin{aligned}
&X^+_i(m)(X^+_{a,b}(n_1)\cdots X^+_{a,b}(n_r)v_{a,b})=0, \quad \forall m, n_1,\dots, n_r\in\Z, \ i\ne M,   \\
& X^+_{a,b}(p)X^+_{a,b}(k)v_{a,b}=0, \quad \forall p,k\in\Z \mbox{ with }p\equiv k~({\rm mod}~2).
\end{aligned}
\end{eqnarray}
Then
\begin{equation}\label{3}
X^+_{a,b}(p)X^+_{a,b}(i)X^+_{a,b}(j)v_{a,b}=0, \quad \forall p, i, j\in\Z.
\end{equation}
Furthermore, there exists $k$  $(0\le k\le 2)$ and $n_1,\dots, n_k\in \Z$ such that
\begin{eqnarray}\label{3-}
\begin{aligned}
&w_{a,b}:=  X^+_{a,b}(n_1)\cdots X^+_{a,b}(n_k)v_{a,b}\ne 0,\\
   & X^+_{a,b}(m)w_{a,b}=0,\quad \forall m\in\Z.
\end{aligned}
\end{eqnarray}
\end{lemma}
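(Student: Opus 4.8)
\textbf{Proof plan for Lemma \ref{key-2}.}
The plan is to treat the two assertions of the lemma separately: first \eqref{3}, then the existence statement \eqref{3-}, which will follow from \eqref{3} together with the integrability hypothesis. Write $x_n:=X^+_{a,b}(n)$ for brevity. The vector $v_{a,b}$ and all the elements $x_n$ are odd (since $a\le M\le b$, so the root string $\beta_{a,b}$ passes through the odd index $M$), so each $x_n$ is a nilpotent odd operator on the relevant span; in particular $x_n^2=\frac12[x_n,x_n]$ and the hypothesis in \eqref{2} says $x_px_k v_{a,b}=0$ whenever $p\equiv k\pmod 2$, which in particular gives $x_n^2v_{a,b}=0$ for all $n$.

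For \eqref{3}, the key device is a Serre-type or $q$-commutator identity expressing $[x_{p+1},x_k]_{u}+[x_{k+1},x_p]_{u}=0$ for the appropriate power $u$ of $q$ — this is exactly the $a_{ij}\neq 0$ relation of Definition \ref{def:qaff}, applied iteratively to the composite generators $X^+_{a,b}(\cdot)$ (for which an analogous relation holds, obtainable by inducting on $b-a$ as in the derivation of Proposition \ref{prop:PBW} / \cite[Theorem 3.12]{zh13}). Using such a relation one rewrites $x_p x_i x_j v_{a,b}$, shifting parities of the arguments: any product $x_a x_b x_c v_{a,b}$ with two of $a,b,c$ of equal parity is killed directly by \eqref{2}; the remaining case is when $a,b,c$ have \emph{all distinct} behaviour mod $2$, which is impossible for three integers, so in fact among any three arguments two share a parity. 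Hence the only genuine content is to move an offending product into a form where an adjacent pair has equal parity; the $q$-bracket relation lets one trade $x_{p}x_{i}$ for a combination of $x_{p-1}x_{i+1}$ and $x_{i}x_{p}$ (and symmetric variants), and iterating this a bounded number of times brings \emph{some} adjacent pair to equal parity while the outermost application of $X^+_i(m)$ ($i\neq M$) annihilation from \eqref{2} controls the error terms that slip to the left. This is the step I expect to be the main obstacle: organising the induction/bookkeeping on the arguments $(p,i,j)$ so that every term produced either has a same-parity adjacent pair (hence vanishes by \eqref{2}) or is annihilated by an $X^+_i(m)$, $i\ne M$; it is essentially the lengthy computation the authors defer to the appendix, and the delicate point is that the composite-root Serre relations for $X^+_{a,b}$ are not literally in Definition \ref{def:qaff} and must first be established.

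For \eqref{3-}, I would argue as follows. By integrability of $V$, each $x_n=X^+_{a,b}(n)$ acts locally nilpotently, and since $X^+_{a,b}(n)$ lies in $\U_q^+$ (a composite of positive generators), so does any product of them; moreover all the vectors $x_{n_1}\cdots x_{n_k}v_{a,b}$ for fixed $k$ lie in a single weight space, which is finite dimensional. Consider the (finite-dimensional) span $W$ of $\{v_{a,b}\}\cup\{x_n v_{a,b}\}\cup\{x_n x_m v_{a,b}\}$; by \eqref{3} every triple product vanishes, so $W$ is stable under all $x_n$ and $x_n x_m x_p|_W=0$. If $x_n v_{a,b}=0$ for all $n$, take $k=0$, $w_{a,b}=v_{a,b}$. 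Otherwise pick $n_1$ with $x_{n_1}v_{a,b}\neq0$; if $x_m x_{n_1}v_{a,b}=0$ for all $m$, take $k=1$. Otherwise pick $n_2$ with $x_{n_2}x_{n_1}v_{a,b}\neq0$; then \eqref{3} forces $x_m(x_{n_2}x_{n_1}v_{a,b})=0$ for every $m$, so $k=2$ works. In every case we produce $w_{a,b}$ with $0\le k\le 2$, $w_{a,b}\neq0$, and $X^+_{a,b}(m)w_{a,b}=0$ for all $m\in\Z$, which is \eqref{3-}; note that \eqref{3} is precisely what makes the process terminate at $k\le 2$ rather than requiring the local-nilpotency bound directly.
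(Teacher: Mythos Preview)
Your deduction of \eqref{3-} from \eqref{3} is correct and is exactly what the paper does (it says only that \eqref{3-} ``directly follows from \eqref{3}''). The gap is in your plan for \eqref{3}.

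The relation you propose, $[x_{p+1},x_k]_u+[x_{k+1},x_p]_u=0$ for the composite vectors $x_n=X^+_{a,b}(n)$, does not hold as an identity in $\U_q$. The defining relation of that shape in Definition~\ref{def:qaff} is stated for $a_{ij}\ne 0$, whereas for $(a,b)\in S$ the root $\beta_{a,b}=\varepsilon_a-\varepsilon_{b+1}$ is isotropic; and in fact the paper's explicit computation of $[X^+_{a,b}(p),X^+_{a,b}(p+1+2i)]$ in Appendix~\ref{pf:key-2} shows this anticommutator is a nonzero sum of products of \emph{simple} root vectors $X^+_{a+1,b}(\cdot)X^+_a(\cdot)$ etc., not a linear combination of $x_{p'}x_{k'}$'s. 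Even if your relation held, exchanging $x_{p+1}x_k$ for combinations of $x_kx_{p+1}$, $x_{k+1}x_p$, $x_px_{k+1}$ preserves the parity pattern of each adjacent pair, so it cannot ``bring an adjacent pair to equal parity'' as you describe. Relatedly, your claim that ``any product $x_ax_bx_cv_{a,b}$ with two of $a,b,c$ of equal parity is killed directly by~\eqref{2}'' is false: \eqref{2} kills the triple only when the \emph{rightmost} pair has equal parity.

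The paper's mechanism is structurally different. It fixes the rightmost factor $x_l$ and proves $x_px_kx_lv_{a,b}=0$ whenever $p\equiv l\pmod 2$, by induction on $|p-k|$ (not on parity). The base case $|p-k|=1$ is the honest operator identity $[X^+_{a,b}(p),X^+_{a,b}(p\pm 1)]=0$, proved by a direct $q$-bracket calculation. For the inductive step one expands $[x_p,x_k]$ through the simple generators via \eqref{ii+1}--\eqref{>M}; only \emph{after} applying this to $x_lv_{a,b}$ and invoking the first hypothesis in~\eqref{2} (annihilation by $X^+_i(m)$, $i\ne M$) do the simple-root pieces collapse back to a sum $\sum_s d_s\,x_{p'}x_{k'}x_lv_{a,b}$ with $p'+k'=p+k$ and $|p'-k'|<|p-k|$, to which the induction hypothesis and~\eqref{2} apply. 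The case $p\equiv k$ is handled the same way, and together with the trivial case $k\equiv l$ this yields~\eqref{3}. So the $X^+_i(m)$-annihilation from~\eqref{2} is not a peripheral control on ``error terms that slip to the left'' but the essential device that turns the messy commutator back into composite root vectors; your sketch does not supply a substitute for this.
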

\begin{proof} See Appendix \ref{pf:key-2} .
\end{proof}

Let $V$ be an irreducible zero-level integrable module for $\U_q$ with finite dimensional weight spaces.
By definition, $V$ is integrable over the even subalgebra of $\U_q$. It follows from
Chari's work \cite{cg03} that there is a non-zero weight vector $v\in V$ such that
\begin{equation}\label{0-hw}
X^+_i(m)v=0, \quad \forall m\in\Z, \ i\ne M.
\end{equation}

Denote by $wt(v)$ the weight of $v$. Let $X$ be the subspace of $V$ spanned by the vectors
$X^+_M(k)X^+_M(-k)v$ for all $k\ge 0$, which is a subspace of $V_{wt(v)+2\alpha_M}$,
thus is finite dimensional. Therefore, there exists a finite positive integer $K$ such that
\[
X={\rm span}\{X^+_M(k)X^+_M(-k)v \mid 0< k<K\}.
\]
Thus  for any $r\in\Z$ we have
\begin{equation}\label{K}
X^+_M(r)X^+_M(-r)v=\sum\limits_{0< k<K}a^{(r)}_k X^+_M(k)X^+_M(-k)v,\quad a^{(r)}_k\in\C.
\end{equation}

Note that the elements $X^+_M(k)$ for all $k\in\Z$ anti-commute among themselves and satisfy
$X^+_M(k)^2=0$. Thus equation \eqref{M} below immediately follows from \eqref{K}.
\begin{lemma}\label{lem:v-vector}
Let $V$ be a simple zero-level integrable $\U_q$-module,  and
let $v\in V$ be a nonzero weight vector satisfying \eqref{0-hw}.
Then the following relations hold for large  $k$:
\begin{equation}\label{M}
X^+_M(n_k)X^+_M(-n_k)\cdots X^+_M(n_1)X^+_M(-n_1)v=0, \quad \forall n_1,\dots,n_k\in\Z;
\end{equation}
\begin{equation}\label{1M+N-1}
X^+_{1,M+N-1}(m_k)\cdots X^+_{1,M+N-1}(m_1)v=0, \quad \forall m_1, \dots, m_k\in\Z.
\end{equation}
\end{lemma}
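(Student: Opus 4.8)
The plan is to derive both nilpotency statements from the structural facts already available, namely equation \eqref{K}, the anticommutativity and squaring-to-zero of the odd generators $X^+_M(k)$, and Lemma \ref{key-1}. For \eqref{M}, first I would observe that because the $X^+_M(k)$ anticommute and square to zero, any product $X^+_M(n_k)X^+_M(-n_k)\cdots X^+_M(n_1)X^+_M(-n_1)v$ lies in $V_{wt(v)+2k\alpha_M}$ and is built out of the vectors $X^+_M(a)X^+_M(b)v$; using \eqref{K} one rewrites an innermost pair $X^+_M(n_1)X^+_M(-n_1)v$ (or, after reindexing by the $h_{M-1}(r)$-trick as in the proof of Theorem \ref{thm:fd-module}, any pair $X^+_M(p)X^+_M(-p)v$) as a finite $\K$-linear combination of $X^+_M(k)X^+_M(-k)v$ with $0<k<K$. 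Iterating, the vector on the left of \eqref{M} becomes a linear combination of products in which each index from the bounded set $\{-(K-1),\dots,K-1\}$ is used at most twice (in fact, because $X^+_M(k)^2=0$, each specific value at most... with the sign pattern controlled), so once $k$ exceeds the number of admissible distinct creation patterns the product must vanish by the pigeonhole principle together with $X^+_M(k)^2=0$. The main point to make precise is that the recursion in \eqref{K} has bounded ``width'': repeatedly substituting keeps all $X^+_M$-modes inside a fixed finite set, so the $2k$-fold products span a finite-dimensional space and for $k$ large enough the specific alternating products are forced to be $0$.

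For \eqref{1M+N-1}, I would argue that it is a consequence of \eqref{M} together with Lemma \ref{key-1}. Starting from $v$, which satisfies \eqref{0-hw} (this is exactly the hypothesis $X^+_i(m)v = X^+_{k,l}(m)v = 0$ of Lemma \ref{key-1} for all $(k,l)$ above the bottom of the ordering, with the convention that there are no relevant lower $(k,l)$, and noting $X^+_{k,l}(m)v$ with $(k,l)$ not straddling $M$ vanishes since $v$ is a highest weight vector for the even part), one climbs the ladder of the ordered set $S$ by successively applying products of $X^+_{a,b}$'s. At each step where the odd index is involved --- i.e. when $a\le M\le b$ --- Lemma \ref{key-2} guarantees that after applying at most two factors one reaches a vector annihilated by all $X^+_{a,b}(m)$; when the step does not involve $M$ the relevant relation is the ordinary $\mathfrak{sl}(2)$-integrability statement that a highest weight vector for $\U_{q_i}(\widehat{\mathfrak{sl}}(2))$ is killed by high powers of $X^+_i(0)$ and hence by long products of $X^+_i(m)$'s. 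Threading Lemma \ref{key-1} through these steps propagates the vanishing $X^+_i(m)(\cdots) = X^+_{k,l}(m)(\cdots) = 0$ up to the top element of the ordering, which corresponds to $\beta_{1,M+N-1}$; the key combinatorial input that the ``column'' $X^+_{a,b}$ with $a\le M\le b$ needs at most two applications (and the even columns a bounded number dictated by the finite-dimensional weight spaces) is precisely what \eqref{M} and Lemma \ref{key-2} supply, so the total number of factors $k$ needed to force $X^+_{1,M+N-1}(m_k)\cdots X^+_{1,M+N-1}(m_1)v = 0$ is finite and uniform.

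The hard part will be bookkeeping the bounded-width recursion for \eqref{M}: one must check that substituting \eqref{K} into a deeply nested product genuinely terminates with all modes confined to a fixed finite set and with bounded multiplicities, rather than spawning ever-longer expressions --- this is where one uses that $X^+_M(k)^2=0$ to cap multiplicities and that the coefficients $a^{(r)}_k$ exist for every $r$ (via the $h_{M-1}(r)$-conjugation argument) so that no mode outside $\{-(K-1),\dots,K-1\}$ ever reappears. A secondary subtlety is getting the anticommutation signs right when reordering the $X^+_M$ factors so that adjacent equal modes cancel; but since the statement only asserts vanishing for \emph{large} $k$, I would not optimize the bound --- it suffices to note that the $\K$-span of all $2k$-fold alternating products of modes in a fixed finite set stabilizes and consists of vectors in a single finite-dimensional weight space, so beyond some $k_0$ adding two more factors (which must repeat a mode, killing the product) yields $0$. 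The remaining steps --- the $\mathfrak{sl}(2)$ input and the application of Lemmas \ref{key-1} and \ref{key-2} --- are then essentially formal.
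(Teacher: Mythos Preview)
Your approach to \eqref{M} is correct and matches the paper's: both rely on \eqref{K}, the anticommutativity of the $X^+_M(k)$'s, and $X^+_M(k)^2=0$.

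Your approach to \eqref{1M+N-1}, however, has a genuine gap. What you sketch --- climbing the ladder of $S$ by successively applying products of $X^+_{a,b}$'s and invoking Lemmas~\ref{key-1} and~\ref{key-2} --- is essentially the argument for Proposition~\ref{prop:key}, not for this lemma. That procedure produces a vector $w$, obtained from $v$ by applying root vectors $X^+_{a,b}$ for \emph{various} $(a,b)\in S$, which is annihilated by the $X^+_{a,b}(m)$; it does not show that long products of $X^+_{1,M+N-1}$ \emph{alone} annihilate the original $v$. Your final sentence (``the total number of factors $k$ needed \dots\ is finite and uniform'') conflates a bound on how many factors of assorted $X^+_{a,b}$'s are needed to reach such a $w$ with the desired bound on products of the single root vector $X^+_{1,M+N-1}$; these are different statements, and the first does not imply the second.

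The step you are missing, which the paper supplies, is a direct conversion of \eqref{M} into the analogous statement for $E:=X^+_{1,M+N-1}$. One applies the operator
\[
(X^+_{M+N-1}(0))^{2k}\cdots(X^+_{M+1}(0))^{2k}\,(X^+_{1}(m))^{2k}\,(X^+_{2}(0))^{2k}\cdots (X^+_{M-1}(0))^{2k}
\]
to both sides of \eqref{M}. Since $X^+_j(\cdot)v=0$ for $j\ne M$ and the quantized Serre relation \eqref{ii} forces $[X^+_j(m),[X^+_j(m),\,\cdot\,]_{q^{-1}}]_q=0$, each block $(X^+_j(\cdot))^{2k}$ distributes exactly one $X^+_j$ onto each of the $2k$ factors $X^+_M(\pm n_i)$, turning them into the composite root vectors $X^+_{1,M+N-1}(m\pm n_i)$. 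This yields
\[
E(m+n_k)E(m-n_k)\cdots E(m+n_1)E(m-n_1)v=0,
\]
and only \emph{then} does the paper pass to a minimal-length nonzero such product $v'$ and invoke Lemmas~\ref{key-1} and~\ref{key-2} (the latter to remove the parity constraint on the arguments and get \eqref{3-}). Without this conversion step there is nothing linking \eqref{M} to products of $X^+_{1,M+N-1}$, and Lemmas~\ref{key-1} and~\ref{key-2} alone do not give \eqref{1M+N-1}.
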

\begin{proof} Since \eqref{M} was proven already, we only need to consider \eqref{1M+N-1}.
For notational simplicity, we write $E(m)=X^+_{1, M+N-1}(m)$ for all $m$.
Applying
\[(X^+_{M+N-1}(0))^{2k}\cdots(X^+_{M+1}(0))^{2k}(X^+_{1}(m))^{2k}(X^+_{2}(0))^{2k}\cdots (X^+_{M-1}(0))^{2k}\]
to \eqref{M} and then using  \eqref{ii}, we can show that
\begin{equation}\label{2-1M+N-1}
E(m+n_k)E(m-n_k)\cdots  E(m+n_1)E(m-n_1)v=0.
\end{equation}
Let $l+1$ be the minimal integer such that \eqref{2-1M+N-1} holds. Then there exist $r_1,\cdots,r_{l}$ such that
\begin{eqnarray*}
&&v':=E(m+r_{l})E(m-r_{l})\cdots  E(m+r_1)E(m-r_1)v\ne 0,\\
&&E(p)E(k)v'=0\quad \mbox{ for all }p,  k\in\Z\mbox{ with }p\equiv k~({\rm mod} ~2).
\end{eqnarray*}
By Lemma \ref{key-1}, we have
\[
X^+_i(m)(E(n_1)\cdots  E(n_k)v')=0, \quad i\ne M, n_1,\dots,n_k\in\Z, k\in\Z_{\ge 0}.
\] Now \eqref{1M+N-1} follows from \eqref{3-}.
\end{proof}

Using Lemma \ref{lem:v-vector}, we can prove the following result.
\begin{proposition}\label{prop:key}
Let $V$ be an irreducible zero-level integrable $\U_q$-module with finite dimensional weight spaces.
Then there always exists a nonzero weight vector $w\in V$ such that
\begin{eqnarray}
\label{semi-positive}
&&X^+_{i}(r)w=0,\quad \forall i\ne M, \  r\in\Z,\\
\label{semi-positive1}
&&X^+_{a,b}(r)w=0, \quad  \forall (a,b)\in S, \  r\in\Z, \\
\label{semi-positive2}
&&X^+_M(n_1)\cdots X^+_M(n_k)w=0, \quad  \forall n_i\in\Z,  \ \text{large $k$}.
\end{eqnarray}
\end{proposition}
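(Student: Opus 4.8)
The plan is to produce the vector $w$ by an iterative construction that walks through the ordered set $S=\{(a,b)\mid 1\le a\le M\le b\le M+N-1,\ a<b\}$, building at each stage a weight vector that is annihilated by all $X^+_i(m)$ with $i\ne M$, by all $X^+_{k,l}(r)$ with $(k,l)$ above the current index, and that satisfies a local nilpotency condition for the current $X^+_{a,b}$-string. The base case is the vector $v$ of \eqref{0-hw} furnished by Chari's work \cite{cg03}; by Lemma \ref{lem:v-vector} the string $X^+_{1,M+N-1}(m_k)\cdots X^+_{1,M+N-1}(m_1)v$ vanishes for large $k$, and $X^+_M(n_k)X^+_M(-n_k)\cdots v=0$ for large $k$ as well. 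This gives the initial data from which the induction starts.

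Next I would set up the inductive step. Suppose at a stage indexed by $(a,b)\in S$ we have a nonzero weight vector $v_{a-1,b-1}$ (or the relabelled $v$ for the first step) with $X^+_i(m)v_{a-1,b-1}=0$ for $i\ne M$ and $X^+_{k,l}(m)v_{a-1,b-1}=0$ for all $(k,l)>(a-1,b-1)$. Because the $X^+_{a,b}(m)$ act on a finite-dimensional weight space (the weight $wt(v_{a-1,b-1})+{\rm something}\cdot\alpha$-shift is fixed under applying $X^+_{a,b}$ twice modulo parity, exactly as in the $X^+_M$ discussion preceding Lemma \ref{lem:v-vector}), one can extract a finite product $X^+_{a,b}(n_1)\cdots X^+_{a,b}(n_r)v_{a-1,b-1}=:v_{a,b}$ that is nonzero but has $X^+_{a,b}(p)X^+_{a,b}(k)v_{a,b}=0$ whenever $p\equiv k\pmod 2$. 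Lemma \ref{key-1} then propagates the vanishing of $X^+_i(m)$ ($i\ne M$) and of $X^+_{k,l}(m)$ for $(k,l)>(a,b)$ to all vectors of the form $X^+_{a,b}(m_1)\cdots X^+_{a,b}(m_t)v_{a,b}$, which verifies hypothesis \eqref{2} of Lemma \ref{key-2}; applying Lemma \ref{key-2} yields a further vector $w_{a,b}=X^+_{a,b}(n_1)\cdots X^+_{a,b}(n_k)v_{a,b}\ne0$ (with $k\le 2$) killed by every $X^+_{a,b}(m)$. A final invocation of Lemma \ref{key-1} shows that $w_{a,b}$ still satisfies the propagated vanishing conditions, so $w_{a,b}$ serves as the next-stage vector for the successor index in $S$. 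Iterating through all of $S$ produces a vector $w$ with $X^+_i(r)w=0$ for all $i\ne M$ and $X^+_{a,b}(r)w=0$ for all $(a,b)\in S$ and $r\in\Z$, which is \eqref{semi-positive} and \eqref{semi-positive1}.

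It remains to recover the local nilpotency \eqref{semi-positive2} for $X^+_M$ at the very end. Here I would argue that $X$-span finiteness applies again: the vectors $X^+_M(k)X^+_M(-k)w$ all lie in the finite-dimensional weight space $V_{wt(w)+2\alpha_M}$, so a relation of the shape \eqref{K} holds, and since the $X^+_M(k)$ anticommute and square to zero, the same reasoning that derived \eqref{M} from \eqref{K} gives $X^+_M(n_1)\cdots X^+_M(n_k)w=0$ for large $k$; one should double-check that this nilpotency is not destroyed by the intervening steps, but since each step only multiplies by $X^+_{a,b}$-operators which commute appropriately with the even subalgebra and $V$ stays integrable over $\U_q(\widehat{\mathfrak{sl}}(M))$ and $\U_{q^{-1}}(\widehat{\mathfrak{sl}}(N))$, finiteness of the relevant weight space persists.

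The main obstacle I anticipate is bookkeeping the ordering on $S$ correctly through the recursion: one must ensure that at the step for $(a,b)$ the vanishing hypotheses of Lemma \ref{key-1} are exactly the conclusions delivered at the previous step, and that the two distinct cases in Lemma \ref{key-1} (the generic case $b-1\ne M+N$ versus the boundary case $b-1=M+N$, where the index wraps to $(1,b-a)$) are threaded consistently with the total order $(a,b)>(a',b')\iff b-a>b'-a'$ or ($b-a=b'-a'$ and $a<a'$). The analytic content—finite-dimensionality of weight spaces forcing the $X^+_{a,b}$-strings to terminate—is routine once the $X$-span argument from the $X^+_M$ case is transcribed; the real care is in verifying that Lemma \ref{key-2} is applicable at each stage, i.e. that \eqref{2} genuinely holds, which is where Lemma \ref{key-1} must be invoked with the correct index in its hypothesis.
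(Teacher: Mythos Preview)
Your inductive construction for \eqref{semi-positive} and \eqref{semi-positive1} is essentially the paper's: start from the vector of \eqref{0-hw}, use Lemma \ref{lem:v-vector} for the top index, then descend through $S$ with Lemmas \ref{key-1} and \ref{key-2}. The paper is a bit more explicit about \emph{how} the mod-$2$ vanishing hypothesis \eqref{2} is obtained at each stage---it re-establishes the paired relation \eqref{M} for the current vector, then hits it with products $(X^+_{j}(0))^{2k}$ (or $(X^+_1(m))^{2k}$) via \eqref{ii} to convert $X^+_M$-pairs into $X^+_{a,b}$-pairs of the form $X^+_{a,b}(m+n_i)X^+_{a,b}(m-n_i)$, whose arguments are automatically congruent mod $2$---whereas you gesture at a generic finite-dimensionality argument. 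That part is fixable with the paper's mechanism.

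The genuine gap is your argument for \eqref{semi-positive2}. You propose to re-run the $X$-span argument that gave \eqref{K} and hence \eqref{M}. But \eqref{M} only asserts that \emph{paired} products $X^+_M(n_k)X^+_M(-n_k)\cdots X^+_M(n_1)X^+_M(-n_1)w$ vanish for large $k$; these all have total $\delta$-degree zero and live in the single weight space $V_{wt(w)+2k\alpha_M}$. The statement \eqref{semi-positive2} concerns \emph{arbitrary} products $X^+_M(n_1)\cdots X^+_M(n_k)w$, whose weights $wt(w)+k\alpha_M+(\sum n_i)\delta$ range over infinitely many weight spaces as $\sum n_i$ varies. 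Anticommutativity and $X^+_M(k)^2=0$ alone do not let you reduce, say, $X^+_M(1)X^+_M(2)\cdots X^+_M(2K)w$ to a combination of paired products. So your finite-dimensionality argument does not reach \eqref{semi-positive2}.

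The paper closes this gap differently. Once $w$ satisfies \eqref{semi-positive}--\eqref{semi-positive1}, the PBW basis of Proposition \ref{prop:PBW} forces every weight of $V$ above $\mu=wt(w)$ to be of the form $\mu+a\alpha_M+b\delta$ (equation \eqref{eq:is-a-weight}). Now suppose \eqref{semi-positive2} fails: for arbitrarily large $k$ there is a nonzero $\tilde w$ of weight $\nu=\mu+k\alpha_M+(\sum n_i)\delta$. For $k$ large, $(\nu,\alpha_{M-1})<0$, so $\mathfrak{sl}_2$-integrability at the $(M-1)$-node forces $\nu+\alpha_{M-1}\in P(V)$. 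But $\nu+\alpha_{M-1}$ is not of the form \eqref{eq:is-a-weight}, a contradiction. This weight-lattice argument is the missing ingredient; you should replace your final paragraph with it.
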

\begin{proof}
By Lemma \ref{lem:v-vector}, one can find a non-zero weight vector $v_{1,M+N-1}$ such that
\[
X^+_i(m)v_{1,M+N-1}=X^+_{1,M+N-1}(m)v_{1,M+N-1}=0\quad i\ne M, m\in\Z.
\]
 We observe that \eqref{M} still holds if we replace $v$ by $v_{1,M+N-1}$, namely, for large $k$,
\begin{equation}\label{M-1M+N-1}
X^+_M(n_k)X^+_M(-n_k)\cdots X^+_M(n_1)X^+_M(-n_1)v_{1,M+N-1}=0, \quad \forall n_1,\dots,n_k\in\Z.
\end{equation}
Applying $(X^+_{M+N-2}(0))^{2k}\cdots(X^+_{M+1}(0))^{2k}(X^+_{1}(m))^{2k}(X^+_{2}(0))^{2k}\cdots (X^+_{M-1}(0))^{2k}$ to this equation, and then using  \eqref{ii},
we obtain
\[
X^+_{1,M+N-2}(m+n_k)X^+_{1,M+N-2}(m-n_k)\cdots  X^+_{1,M+N-2}(m+n_1)X^+_{1,M+N-2}(m-n_1)v=0.
\]
Clearly, $m+n_i\equiv m-n_i ~({\rm mod}~2), i=1,\dots, k$. From Lemma \ref{key-1} and Lemma \ref{key-2},
we can find a non-zero weight vector $v_{1,M+N-2}$ such that for all $m\in\Z$,
\begin{eqnarray*}
&&X^+_i(m)v_{1,M+N-2}=0,\quad i\ne M,\\
&&X^+_{1,M+N-1}(m)v_{1,M+N-2}=X^+_{1,M+N-2}(m)v_{1,M+N-2}=0.
\end{eqnarray*}

Repeating the above arguments for a finite number of times, we will find a nonzero weight vector $w$ such that
\begin{equation}\label{semi-positive0}
X^+_i(m)w= X^+_{a,b}(m)w=0, \quad i\ne M, \   m\in\Z, \ (a,b)\in S.
\end{equation}

Let $\mu$ be the weight of $w$.  Observe that $V$, being irreducible, must be cyclically generated by $w$ over $\U_q$.
By using the PBW theorem for  $\U_q$ and equation \eqref{semi-positive0},
we easily show that any weight of $V$ which is bigger than $\mu$ (relative to $B$; see also the Borel subalgebra of
$\widehat{\mathfrak{sl}}(M|N)$ defined by \eqref{eq:borel-2}) must be of the form
\begin{eqnarray}\label{eq:is-a-weight}
\mu +a(\es_M-\es_{M+1})+b\d, \quad a\in\Z_{\ge 0},\,b\in\Z.
\end{eqnarray}

Now we prove \eqref{semi-positive2}. Suppose it is false, that is,
for any positive integer $p$,
there always exist $k>p$ and $n_1, \dots, n_k\in \Z$ such that  $\tilde{w}:=X^+_M(n_1)\cdots X^+_M(n_k)w\ne 0$.
Then $\nu:=\mu+k(\es_M-\es_{M+1})+\sum\limits_{i=1}^kn_i\d$ is the weight of
$\tilde{w}$. But for large $p$, and hence large $k$, we have $(\nu,\, \es_{M-1}-\es_M)<0$. Thus $\nu+(\es_{M-1}-\es_M)$ is a weight of $V$ by considering the action of the $\U_q(\mathfrak{sl}_2)$ subalgebra generated by  $X^{\pm}_{M-1}(0)$ and  $K^{\pm 1}_{M-1}$.
However, the weight $\nu+(\es_{M-1}-\es_M)$ is not of the form \eqref{eq:is-a-weight}, proving \eqref{semi-positive2} by contradiction.
\end{proof}

The following theorem is now an easy consequence of Proposition \ref{prop:key}.
\begin{theorem}\label{h-vector}\label{thm:hw}
Let $V$ be an irreducible zero-level integrable $\U_q$-module with finite dimensional weight spaces.
Then $V$ is a highest weight module with respect to the triangular decomposition of
$\U_q$ given by \eqref{loop-tri-decomp}.
\end{theorem}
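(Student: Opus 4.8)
The plan is to derive Theorem~\ref{thm:hw} as a short formal consequence of Proposition~\ref{prop:key} together with the PBW theorem for $\U_q$. First I would take the weight vector $w\in V$ produced by Proposition~\ref{prop:key}, so that $w$ satisfies \eqref{semi-positive}, \eqref{semi-positive1} and \eqref{semi-positive2}. The goal is to show that $w$ is a highest weight vector in the sense of the definition preceding Lemma~\ref{Cartan}, i.e.\ that (i) $\U_q w = V$, (ii) $X^+_i(m)w = 0$ for all $i\in I$ and $m\in\Z$, and (iii) $\U_q(0)w$ is an irreducible $\U_q(0)$-module.

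For (i): since $V$ is irreducible and $w\ne 0$, we automatically have $\U_q w = V$. For (ii): the vector $w$ already kills $X^+_i(m)$ for $i\ne M$ by \eqref{semi-positive}, and for $i=M$ condition \eqref{semi-positive2} only says the $X^+_M(n)$ act locally nilpotently on $w$, not that they annihilate it. To upgrade this, I would argue as follows. Consider the (finite-dimensional) weight space $V_{\mu + a(\es_M-\es_{M+1}) + b\d}$ for $a\ge 0$; by \eqref{eq:is-a-weight} and the PBW basis of $\U_q^+$ (Proposition~\ref{prop:PBW}), every weight of $V$ strictly above $wt(w) = \mu$ relative to $B$ has this form with $a\ge 1$. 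Among all vectors obtained from $w$ by applying products of the $X^+_M(n_i)$, pick one, say $w' = X^+_M(n_1)\cdots X^+_M(n_k)w$, that is nonzero with $k$ maximal — this $k$ is finite by \eqref{semi-positive2}. Then $X^+_M(m)w' = 0$ for all $m\in\Z$ by maximality. Lemma~\ref{key-1} (applied along the chain $S$, or directly since $w'$ is obtained from $w$ by the allowed operations) together with \eqref{semi-positive} and \eqref{semi-positive1} propagated to $w'$ shows $X^+_i(m)w' = 0$ and $X^+_{a,b}(m)w' = 0$ for all $i\ne M$, $(a,b)\in S$, $m\in\Z$; combined with $X^+_M(m)w'=0$ and the PBW theorem (every element of $\U_q^+$ is a polynomial in the $X^+_{a,b}(n)$ and the $X^+_M(n)$, cf.\ Proposition~\ref{prop:PBW}), we get $\U_q^+ w' = \K w'$, so $w'$ is annihilated by all of $\U_q^+$. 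Relabel $w := w'$.

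It remains to check (iii): $\U_q(0)w$ is irreducible as a $\U_q(0)$-module. Since $w$ is a weight vector for $H$, $\U_q(0)w$ is a $\Z$-graded $\U_q(0)$-module; by Lemma~\ref{Cartan} (applied to the graded homomorphism $\U'_q(0)\to \End$ of the cyclic module on the graded vector $w$) and the standard argument that a $\Z$-graded cyclic module over $\U_q(0)$ generated by a vector killed by $\U_q^+$ inside an irreducible $\U_q$-module is itself irreducible, $\U_q(0)w$ is simple; if it were not, a proper graded submodule would generate, under $\U_q^-$, a proper $\U_q$-submodule of $V$ (using the triangular decomposition \eqref{loop-tri-decomp} and $\U_q^+ w = \K w$), contradicting irreducibility of $V$. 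Hence $V = \U_q w$ is a highest weight module with respect to \eqref{loop-tri-decomp}, with highest weight vector $w$.

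I expect the main obstacle to be the upgrade from local nilpotency \eqref{semi-positive2} to genuine annihilation by all $X^+_M(m)$ while \emph{simultaneously} preserving \eqref{semi-positive} and \eqref{semi-positive1}: one must be sure that passing from $w$ to the "top" vector $w'$ along a product of $X^+_M(n_i)$'s does not destroy the vanishing of the other $X^+_i(m)$ and $X^+_{a,b}(m)$, which is precisely the content that Lemma~\ref{key-1} is designed to guarantee, but one has to apply it correctly along the ordering of $S$. The remaining verifications — cyclicity and the irreducibility of $\U_q(0)w$ — are routine given the PBW theorem and Lemma~\ref{Cartan}.
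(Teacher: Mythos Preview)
Your proposal is correct and follows the same approach as the paper's proof: start from the $w$ of Proposition~\ref{prop:key}, pass to a ``top'' vector $w'=X^+_M(r_1)\cdots X^+_M(r_{s-1})w$ annihilated by all $X^+_M(m)$, and check that the remaining $X^+_i(m)$ ($i\ne M$) still kill $w'$. The paper's proof is terser (it omits your verification of condition~(iii), deferring that to the proof of Theorem~\ref{thm:class}), but the skeleton is identical.

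One imprecision worth flagging: your appeal to Lemma~\ref{key-1} for the propagation step is not quite on the nose. That lemma is stated for products of $X^+_{a,b}(n)$ with $(a,b)\in S$, whereas here you are applying $X^+_M(n)=X^+_{M,M}(n)$ and $(M,M)\notin S$. The paper does not invoke Lemma~\ref{key-1} at this point either; it simply asserts the step is ``not difficult.'' The actual argument is a direct commutator computation: for $|i-M|\ge 2$ the operators commute, while for $i=M\pm 1$ commuting $X^+_i(m)$ through each $X^+_M(r_j)$ produces terms that ultimately hit $w$ with either $X^+_i(\cdot)$ or $X^+_{M-1,M}(\cdot)$, $X^+_{M,M+1}(\cdot)$, all of which annihilate $w$ by \eqref{semi-positive} and \eqref{semi-positive1}. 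So the conclusion stands, but the justification should be this calculation (in the spirit of the proof of Lemma~\ref{key-1}) rather than the lemma itself. Also note that once $X^+_i(m)w'=0$ for all $i\in I$ you are done with~(ii), since the $X^+_i(m)$ generate $\U_q^+$; the additional vanishing of $X^+_{a,b}(m)w'$ for $(a,b)\in S$ is not needed separately.
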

\begin{proof}
Consider the weight vector $w$ of Proposition \ref{prop:key},
and let $s$ be the minimal integer such that \eqref{semi-positive2} holds.
Then there exist $r_1,\dots,r_{s-1}\in\Z$ such that
\begin{eqnarray*}
&&v:=X^+_M(r_1)\cdots X^+_M(r_{s-1})w\ne 0,\\
&& X^+_M(r) v=0, \quad \forall r\in \Z.
\end{eqnarray*}
It it not difficult to show that we also have $X^+_i(m)v=0$ for all $i\ne M$ and $m\in\Z$.
\end{proof}

\begin{theorem} \label{thm:class}
Let $W$ be an irreducible integrable $\U_q$-module of type $1$ with finite dimensional weight spaces.
Then $W$ is isomorphic to an irreducible component of $L(V, b)$ for some $b\in\C$, where $V=V(P, f, c, Q)$
(see Definition \ref{def:fd-module}) for some $(P, f, c, Q)\in \mathcal{R}_{M, N}$.
\end{theorem}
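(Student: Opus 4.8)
The plan is to assemble Theorems~\ref{thm:hw}, \ref{iso} and~\ref{thm:fd-module}; essentially all of the substantive work has already been carried out, and what remains is to fit the pieces together. First I would note that $\U_q$ is the quotient of $\Uq$ by the ideal generated by $C^{\pm 1/2}-1$, so every $\U_q$-module is automatically at level zero; hence $W$ meets the hypotheses of Theorem~\ref{thm:hw} and is a highest weight module relative to the triangular decomposition~\eqref{loop-tri-decomp}. By the construction preceding~\eqref{eq:Verma}, an irreducible highest weight $\U_q$-module is isomorphic to some $V(\varphi)$ with $\varphi=(\tilde\varphi,b)$, $b\in\C$, and $L_\varphi$ a simple $\U_q(0)$-module; by Lemma~\ref{Cartan} the image $L_\varphi$ of $\tilde\varphi$ is either $L_0=\K$ or $L_r=\K[t^r,t^{-r}]$ for a unique $r>0$. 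So I may take $W\cong V(\varphi)$.

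Next I would produce the associated $\U'_q$-module and its quantum loop module. Set $\psi=S\circ\tilde\varphi:\U'_q(0)\to\K$, where $S:L\to\K$ is the evaluation $t\mapsto 1$; then $\psi$ is an algebra homomorphism and $V(\psi)$ is defined as in~\eqref{eq:ev-VPsi}. Theorem~\ref{iso} gives $V(\psi)\otimes L\cong\bigoplus_i\U_q v(i)$ with every $\U_q v(i)$ simple and $\U_q v(0)\cong V(\varphi)\cong W$, the index $i$ running over $0\le i\le r-1$ when $r>0$ and over all of $\Z$ when $r=0$. Since $V(\psi)\otimes L$ with the action~\eqref{L-module} is precisely the quantum loop module $L(V(\psi);b)$, this already realizes $W$ as an irreducible component of $L(V(\psi);b)$.

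Finally I would identify $V(\psi)$ with one of the modules $V(P,f,c,Q)$. By part~(2) of Theorem~\ref{iso}, $V(\psi)$ has finite-dimensional weight spaces with respect to $H'$ because $W=V(\varphi)$ has finite-dimensional weight spaces with respect to $H$. For integrability I would invoke the evaluation homomorphism~\eqref{evaluat}, which is a surjection of $\U'_q$-modules $W\cong\U_q v(0)\to V(\psi)$: the generators $X^\pm_i(m)$ act locally nilpotently on the integrable module $W$, hence on the quotient $V(\psi)$, and $V(\psi)$ is a weight module since it is a highest weight $\U'_q$-module, so $V(\psi)$ is integrable over $\U'_q$ with finite-dimensional weight spaces. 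Theorem~\ref{thm:fd-module} and Definition~\ref{def:fd-module} then furnish $(P,f,c,Q)\in\mathcal{R}_{M,N}$ with $V(\psi)\cong V(P,f,c,Q)$, and taking $V=V(\psi)$ completes the proof. In this plan the only step needing genuine care is transporting integrability from $W$ down to $V(\psi)$ through~\eqref{evaluat} (together with the bookkeeping of the degenerate case $r=0$ in Theorem~\ref{iso}); the real content sits in Theorem~\ref{thm:hw}, whose proof reduces $W$ to highest weight type via Lemmas~\ref{key-1}--\ref{key-2}, and in Theorem~\ref{thm:fd-module}, which builds on~\cite{zh13}.
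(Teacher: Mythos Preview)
Your proposal is correct and follows essentially the same route as the paper: apply Theorem~\ref{thm:hw} to realize $W$ as some $V(\varphi)$, compose with the evaluation $S$ to get $\psi$, invoke Theorem~\ref{iso} to embed $V(\varphi)$ as a simple summand of $L(V(\psi);b)$, and then use Theorem~\ref{thm:fd-module} to identify $V(\psi)$ with some $V(P,f,c,Q)$. Your justification of the integrability of $V(\psi)$ via the surjection~\eqref{evaluat} is in fact slightly more explicit than the paper's one-line assertion ``so is $V(\psi)$'', which implicitly relies on the equivalence stated just before Theorem~\ref{iso}.
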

\begin{proof}
It follows from Theorem \ref{thm:hw} that there exists a nonzero highest weight vector $v\in W$, and
$W=\U_q v=\U_q^-\U_q(0)v$, where the second equality follows from \eqref{loop-tri-decomp}.
Clearly $\U_q(0)v=\U'_q(0)v$.
Irreducibility of $W$ requires that $\U'_q(0)v$ be an irreducible $\U_q(0)$-module,
and hence an irreducible $\U'_q(0)$-module.
Since $\U'_q(0)$ is a $\Z$-graded commutative algebra, $\U'_q(0)v$ being an irreducible graded module must be the quotient of $\U'_q(0)$ by a maximal graded ideal $\mathcal{M}$ of $\U'_q(0)$ which annihilates $v$.  It follows from Lemma \ref{Cartan}
that $\U'_q(0)/\mathcal{M}\cong L_r:=\K[t^r, t^{-r}].$
Thus we have a natural $\Z$-graded homomorphism
$\tilde\varphi: \U'_q(0)\rightarrow \U'_q(0)/\mathcal{M}\cong L_r$ such that
$\tilde\varphi(x)v=x v$ for all $x\in \U'_q(0)$.
There exists some $b\in\C$ such that $Dv=q^bv$. Set $\varphi=(\tilde\varphi, b)$
(see notation immediately below \eqref{eq:T-mod}).
Then $W$ is isomorphic to $V(\varphi)$.

Set  $\psi=S\circ \tilde\varphi$ and consider the irreducible module $V(\psi)$.
By Theorem \ref{iso}, $V(\varphi)$ is isomorphic to an
irreducible component of $L(V(\psi),b)$.
Since  $V(\varphi)$ is an integrable  $\U_q$-module with finite dimensional weight spaces,
so is $V(\psi)$.  Thus it follows from Theorem \ref{thm:fd-module} that $V(\psi)$ is
isomorphic to $V(P, f, c, Q)$ (see Definition \ref{def:fd-module}) for some $(P, f, c, Q)\in \mathcal{R}_{M,N}$.
This completes the proof.
\end{proof}


\section{Integrable representations at nonzero levels}
In this section, highest and lowest weight $\Uq$-modules are defined relative to
the triangular decomposition of $\Uq$ given in Proposition \ref{tri-decomp}.
\subsection{}

The subalgebra of $\Uq$ generated by
$X^{\pm}_i(n), K^{\pm 1}_i, h_i(r), C^{\pm 1/2}$, $D^{\pm 1}$
(with $M+1\le i\le M+N-1, \ n, r\in\Z, \ r\ne 0$) is
$\U_{q^{-1}}(\widehat{\mathfrak{sl}}(N))$. Thus $C$ acts on any nontrivial
simple integrable highest weight $\U_{q^{-1}}(\widehat{\mathfrak{sl}}(N))$-module \cite{cp91, L93}
by the multiplication by $q^{-\ell}$ for some fixed $\ell>0$.

We have the following result.
\begin{proposition}\label{prop:dominant+}
\begin{enumerate}
\item Let $W$ be an integrable  $\U_{q^{-1}}(\widehat{\mathfrak{sl}}(N))$-module
with finite dimensional weight spaces.
Suppose that the center $C$ acts on $W$ by $(q^{-1})^r$ with $r\in\Z_{>0}$.
If $\lambda$ is a weight of $W$, then
there exists $K>0$ such that
\begin{center}
$\l+\a+k\d$ is not a weight of $W$ for all $k\ge K$ and $\a\in \D_0^2\cup\{0\}$.
\end{center}
\item
Let $V$ be an irreducible integrable module for $\Uq$  with finite dimensional weight spaces.
Suppose that the center $C$ acts on $V$ by $(q^{-1})^r$ with $r\in\Z_{>0}$.
Then for any $\l\in P(V)$ there exists $K>0$ such that
\[
\l+\a+k\d\notin P(V)\quad \mbox{ for all } k\ge K \mbox{ and  for all }\a\in \D_0^2\cup\{0\}.
\]
\end{enumerate}
\end{proposition}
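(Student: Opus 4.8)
The plan is to deduce part (2) from part (1) by restricting $V$ to the affine subalgebra $\U_{q^{-1}}(\widehat{\mathfrak{sl}}(N))$, and to prove part (1) directly by an $\widehat{\mathfrak{sl}}(N)$-weight-geometry argument of the same flavour as Proposition~\ref{prop:key}. Let me start with part (1). Fix a weight $\lambda$ of $W$. The key structural input is the standard fact, valid for integrable modules of $\U_{q^{-1}}(\widehat{\mathfrak{sl}}(N))$ with finite-dimensional weight spaces on which $C$ acts by a fixed positive power of $q^{-1}$ (equivalently, modules of fixed positive level $r$), that the set of weights is contained in a union of finitely many sets of the form $\mu - Q^+_N + \Z\delta$ shifted downward, and more precisely that weights cannot run off to $+\infty$ in the $\delta$-direction inside a fixed coset modulo the finite root lattice $\Z\Delta_0^2$. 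I would make this precise as follows: if, for contradiction, $\lambda+\alpha+k\delta$ were a weight of $W$ for arbitrarily large $k$ and some $\alpha\in\Delta_0^2\cup\{0\}$, then (since $\Delta_0^2\cup\{0\}$ is finite) some single $\alpha$ would work for infinitely many $k$, giving an infinite sequence of weights $\lambda+\alpha+k_j\delta$. Pairing against a suitable coweight and using that the level $r$ is strictly positive, these weights eventually violate the dominance/boundedness constraint coming from integrability: concretely, for large $k$ the weight $\lambda+\alpha+k\delta$ has the property that $\langle \lambda+\alpha+k\delta, \alpha_0^\vee\rangle < 0$ where $\alpha_0$ is the affine simple root of $\widehat{\mathfrak{sl}}(N)$ (here one uses $\langle\delta,\alpha_0^\vee\rangle$ has the wrong sign relative to the level contribution, so the pairing $\to-\infty$), and then applying the $\mathfrak{sl}_2$ attached to $\alpha_0$ forces an infinite string of weights strictly above it, contradicting finite-dimensionality of weight spaces together with the fact that all these weights lie in the single fixed coset $\lambda+\alpha+\Z\delta$ whose intersection with $P(W)$ would then be infinite in a bounded region. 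This is the technical heart, and it is exactly the nonzero-level analogue of the zero-level argument already carried out for $\U_q$.

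For part (2), I would restrict the irreducible integrable $\Uq$-module $V$ to the subalgebra $\U_{q^{-1}}(\widehat{\mathfrak{sl}}(N))$ identified at the start of this subsection, on which $C$ acts by $(q^{-1})^r$ with $r>0$ by hypothesis. Then $V$, viewed as a $\U_{q^{-1}}(\widehat{\mathfrak{sl}}(N))$-module, is integrable with finite-dimensional weight spaces, so part (1) applies to each of its weights. Given $\lambda\in P(V)$, part (1) yields $K>0$ with $\lambda+\alpha+k\delta\notin P(W)$ for all $k\ge K$ and $\alpha\in\Delta_0^2\cup\{0\}$, where $W$ is the $\U_{q^{-1}}(\widehat{\mathfrak{sl}}(N))$-submodule generated by the $\lambda$-weight space; but a weight of $V$ restricted to the relevant Cartan is a weight of $V$ as a $\U_{q^{-1}}(\widehat{\mathfrak{sl}}(N))$-module, and the condition ``$\lambda+\alpha+k\delta$ is a weight'' only involves the values on $\widehat{\fh}$, which are detected by this restriction. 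One subtlety: part (1) is stated for a single module $W$ with $C$ acting by a single scalar, whereas after restriction all of $V$ has $C$ acting by the single scalar $(q^{-1})^r$, so this causes no trouble; I would just note that $V$ itself (not merely a cyclic submodule) is an admissible $\U_{q^{-1}}(\widehat{\mathfrak{sl}}(N))$-module of level $r$ and apply part (1) to $V$ directly. Hence the same $K$ works and $\lambda+\alpha+k\delta\notin P(V)$ for $k\ge K$.

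The main obstacle is part (1): one must transcribe carefully the classical theory of integrable modules of positive level for the affine algebra $\widehat{\mathfrak{sl}}(N)$ (in its quantum, $q^{-1}$-deformed guise) to conclude that weights are bounded above in the $\delta$-direction within each finite-root-lattice coset. The cleanest route is to invoke \cite{cp91, L93}: an integrable module with finite-dimensional weight spaces and positive level is a subquotient of a direct sum of integrable highest weight modules, each of which has weights lying in $\Lambda-\widehat{Q}^+$ for a dominant integral $\Lambda$; intersecting $\Lambda-\widehat{Q}^+$ with a coset $\lambda+\alpha+\Z\delta$ gives a set bounded above in the $\delta$-coefficient because $\delta\in\widehat{Q}^+$ (it is $\alpha_0+\sum_{i}\alpha_i$ with nonnegative coefficients), so increasing the $\delta$-coefficient eventually leaves $\Lambda-\widehat{Q}^+$. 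I would phrase part (1) via this subquotient structure rather than re-deriving the $\mathfrak{sl}_2$-string contradiction from scratch, since \cite{cp91, L93} are already cited for exactly the classification of such modules, and the boundedness is then immediate from the shape of the weight sets of integrable highest weight modules. The remaining care is purely bookkeeping: $\Delta_0^2\cup\{0\}$ is finite, finitely many highest weight constituents occur below $\lambda$ (again by finite-dimensionality of weight spaces), so a single $K$ suffices uniformly.
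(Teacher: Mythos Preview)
Your treatment of part~(1) is acceptable and roughly parallel to the paper's, which also delegates the argument to the literature (adapting \cite[Theorem~1.10]{r03}); your route via the structure theory of integrable positive-level modules from \cite{cp91, L93, cg03} reaches the same conclusion.

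Part~(2), however, has a genuine gap. You assert that $V$, restricted to $\U_{q^{-1}}(\widehat{\mathfrak{sl}}(N))$, is ``integrable with finite-dimensional weight spaces'', and then invoke part~(1). Integrability is inherited, but finite-dimensionality of weight spaces is \emph{not}: the Cartan of $\U_{q^{-1}}(\widehat{\mathfrak{sl}}(N))$ only contains $K_{M+1},\dots,K_{M+N-1},C^{\pm 1/2},D^{\pm 1}$, so a single weight space for the subalgebra is the direct sum of all $\Uq$-weight spaces $V_\nu$ whose restriction to this smaller Cartan agree. Since $\alpha_1,\dots,\alpha_{M-1}$ (and a further combination involving $\alpha_M$) are orthogonal to $\alpha_{M+1},\dots,\alpha_{M+N-1}$, infinitely many distinct $\Uq$-weights of $V$ can collapse to the same $\U_{q^{-1}}(\widehat{\mathfrak{sl}}(N))$-weight, and the hypothesis of part~(1) fails.

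The paper repairs exactly this point: it introduces the commutative subalgebra
\[
T=\K\big[(K_M^{N}K_{M+1}^{N-1}\cdots K_{M+N-2}^{2}K_{M+N-1})^{\pm 1},\,K_{M-1}^{\pm 1},\dots,K_{1}^{\pm 1}\big],
\]
checks that $T$ commutes with $\U_{q^{-1}}(\widehat{\mathfrak{sl}}(N))$, and decomposes $V$ into simultaneous $T$-eigenspaces. Each such eigenspace is then a $\U_{q^{-1}}(\widehat{\mathfrak{sl}}(N))$-module, and the $T$-eigenvalue together with the $\U_{q^{-1}}(\widehat{\mathfrak{sl}}(N))$-weight recovers the full $\Uq$-weight, so these pieces \emph{do} have finite-dimensional weight spaces. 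Part~(1) then applies to the $T$-eigenspace containing a given $\lambda$-weight vector, and since $\alpha\in\Delta_0^2\cup\{0\}$ and $\delta$ leave the $T$-eigenvalue unchanged, the desired bound on $k$ follows. You should insert this decomposition step before invoking part~(1).
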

\begin{proof}
Part (1) can be easily proved by adapting the proof of \cite[Theorem 1.10]{r03} to the present context.
We omit the details.

To prove part (2),  set $T=\K[(K_M^NK_{M+1}^{N-1}\cdots
K_{M+N-2}^{2}K_{M+N-1})^{\pm 1}, K_{M-1}^{\pm 1},\dots, K_{1}^{\pm 1}]$.
Observe that $T$ commutes with $\U_{q^{-1}}(\widehat{\mathfrak{sl}}(N))$.
Decompose $V$ into the direct sum of $T$-invariant subspaces. Each  $T$-invariant subspace is an integrable $\U_{q^{-1}}(\widehat{\mathfrak{sl}}(N))$-module with finite dimensional weight spaces. Now part (1) implies part (2).
\end{proof}

\begin{theorem}\label{thm:level}
Assume that  both $M$ and $N$ are greater than $1$.
Then there exists no integrable $\Uq$-module with finite dimensional weight spaces,
where $C$ does not act by the identity.
\end{theorem}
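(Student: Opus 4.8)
The strategy is to combine the two halves of the paper: a nonzero-level module would be a highest or lowest weight module by (the not-yet-proved but announced) structural results, but we can argue more directly by exploiting the asymmetry between the two affine sub-superalgebras. Assume for contradiction that $V$ is an irreducible integrable $\Uq$-module with finite dimensional weight spaces on which $C$ acts by a scalar $\neq 1$, say $C$ acts by $q^{\ell}$ with $\ell\neq 0$; replacing $q$ by $q^{-1}$ if necessary (which interchanges the roles of $\U_q(\widehat{\mathfrak{sl}}(M))$ and $\U_{q^{-1}}(\widehat{\mathfrak{sl}}(N))$ and the sign of the level), we may assume $C$ acts by $(q^{-1})^{r}$ with $r\in\Z_{>0}$, so that Proposition~\ref{prop:dominant+}(2) applies. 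Thus for every $\l\in P(V)$ there is $K>0$ with $\l+\a+k\d\notin P(V)$ for all $k\ge K$ and all $\a\in\D_0^2\cup\{0\}$.

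First I would fix a weight $\l\in P(V)$ and a nonzero $v\in V_\l$, and study the odd root vectors $X^+_M(n)$ acting on $v$. Since all the $X^+_M(n)v$ ($n\in\Z$) lie in the single finite dimensional weight space $V_{\l+\a_M}$ (note $\a_M=\es_M-\es_{M+1}$ has zero $\d$-component only after the shift—actually $X^+_M(n)$ raises weight by $\a_M+n\d$, so the vectors $X^+_M(n)v$ lie in distinct weight spaces $V_{\l+\a_M+n\d}$), the dominance bound from Proposition~\ref{prop:dominant+}(2) forces $X^+_M(n)v=0$ for all $n\ge K$, since $\l+\a_M+n\d$ is then not a weight ($\a_M\notin\D_0^2$, but $X_M^+(n)v\in V_{\l+\a_M+n\d}$ and one bounds using the $\mathfrak{sl}(N)$-string through $\l$; more carefully one uses that $\es_{M+1}$ enters with the sign relevant to $\D_0^2$). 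The key point is to get a \emph{two-sided} bound: using the lowest-weight analogue of Proposition~\ref{prop:dominant+} (obtained by the opposite triangular decomposition, equivalently applying the statement to the dual or to the module twisted by the Chevalley involution), one gets $X^+_M(n)v=0$ also for $n\le -K'$. Hence only finitely many $X^+_M(n)v$ are nonzero.

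Next, the relation $[X^+_M(m),X^+_M(n)]=0$ (the odd generators at the $M$-th node anticommute and square to zero, as recorded after equation~\eqref{K}) together with the defining relation $[X^+_i(m),X^+_M(n)]_{q^{\pm a_{iM}}}$-type Serre relations for $i=M-1$ and $i=M+1$ lets me run the same kind of propagation argument as in Lemma~\ref{key-1} and Proposition~\ref{prop:key}: starting from a vector killed by all $X^+_i(n)$, $i\neq M$ (which exists by Chari's result as in \eqref{0-hw}, since $V$ is integrable over the even part), I produce a nonzero weight vector $w$ annihilated by $X^+_i(n)$ for all $i\neq M$ and $n\in\Z$, by all $X^+_{a,b}(n)$ for $(a,b)\in S$, and—using the finiteness just established—by all $X^+_M(n)$ as well. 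This makes $w$ a highest weight vector relative to the Borel $\widehat B=\Uq(0)\Uq(+)$ of Proposition~\ref{tri-decomp}; irreducibility then gives that $V$ is the corresponding irreducible highest weight module, and its highest weight must be dominant integral for $\U_q(\widehat{\mathfrak{sl}}(M))$ \emph{and} for $\U_{q^{-1}}(\widehat{\mathfrak{sl}}(N))$.

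Finally comes the contradiction, which is the main obstacle and the real content. With $C$ acting by $(q^{-1})^r$, $r>0$, the restriction of $V$ to $\U_{q^{-1}}(\widehat{\mathfrak{sl}}(N))$ is a \emph{nontrivial} integrable module at positive level, so its highest weight pairs positively with the $N$-node affine coroot; but the restriction to $\U_q(\widehat{\mathfrak{sl}}(M))$ then has level $-r<0$, so by \cite{cp91, L93} the only integrable such module with finite dimensional weight spaces is trivial, i.e. the highest weight pairs to zero with \emph{all} the $M$-node coroots including the affine one $\a_0$. Combining "$V|_{\U_{q^{-1}}(\widehat{\mathfrak{sl}}(N))}$ nontrivial" with "$V|_{\U_q(\widehat{\mathfrak{sl}}(M))}$ trivial" one reaches the obstruction: the highest weight $\mu$ satisfies $(\mu,\a_i)=0$ for all $i<M$ and $(\mu,\a_0)=0$, while $(\mu,\a_j)\in\Z_{\ge0}$ for $j>M$ with $(\mu,\check\a^{(N)}_{\mathrm{aff}})=r>0$; one then computes the weight string through the $M$-th odd root and shows that integrability at the $(M-1)$-st and $(M+1)$-st nodes together with the odd reflection relation $[X_{M-1}^+(0),[X_M^+(0),X_{M+1}^+(0)]]$-type Serre relation forces $r=0$ when both $M,N>1$ (this is exactly where the hypothesis $M,N>1$ is used: when $M=1$ or $N=1$ there is no $(M-1)$-st or $(M+1)$-st even node to obstruct). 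I expect the delicate step to be making this last weight-combinatorial/commutator argument rigorous—tracking how the odd node transfers the level between the two even affine pieces and pinning down that only $M=1$ or $N=1$ evades it; the earlier steps are close to Lemma~\ref{lem:v-vector} and Proposition~\ref{prop:key} in spirit.
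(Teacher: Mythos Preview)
Your approach takes a long detour and has real gaps, while the paper's proof is essentially three lines built on an idea you do not use at all: the Heisenberg subalgebra.

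The paper argues as follows. Assume $C$ acts by $q^{-r}$ with $r>0$. Restrict $V$ to $\U_q(\widehat{\mathfrak{sl}}(M))$ (which requires $M>1$). By \cite[Theorem~5]{cg03} there is a weight vector $v$ of weight $\l$ with $X^{\pm}_i(n)v=0$ and $h_i(n)v=0$ for all $n<0$ and $1\le i\le M-1$. The commutation relation
\[
[h_i(n),h_i(-n)]=\frac{[n a_{ii}]_q}{n}\cdot\frac{C^n-C^{-n}}{q_i-q_i^{-1}}
\]
has a \emph{nonzero} scalar on the right precisely because the level is nonzero; hence $h_i(n)v\ne 0$ for all $n>0$, and $\l+n\d\in P(V)$ for every $n>0$. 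This directly contradicts Proposition~\ref{prop:dominant+}(2) (which requires $N>1$). That is the whole proof.

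Your plan instead tries to manufacture a highest weight vector for $\Uq$ using the machinery of Section~3 (Lemma~\ref{key-1}, Proposition~\ref{prop:key}) and then extract a contradiction from dominant integrality on both even affine pieces. Two concrete problems:
\begin{itemize}
\item The Section~3 results are proved for \emph{zero-level} modules; in particular the input \eqref{0-hw} comes from Chari's classification at level zero. You would have to redevelop analogues at nonzero level, which is precisely what the paper does later (Theorem~\ref{thm:sl1-N}) only for $M=1$ and with considerable extra work.
\item Your ``two-sided bound'' on $X^+_M(n)v$ does not follow as stated. Proposition~\ref{prop:dominant+} bounds weights from above in the $\d$-direction for the $\mathfrak{sl}(N)$ side; twisting by a Chevalley-type involution flips both the level sign and the direction of the bound simultaneously, so it does not give a lower bound on the \emph{same} module at the \emph{same} level. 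In fact the whole point of the theorem is that at nonzero level the $\d$-direction is unbounded on the $\mathfrak{sl}(M)$ side, which is exactly what kills your attempt to cap $n$ from below.
\end{itemize}

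The insight you are missing is that one does not need a highest weight vector at all: the Heisenberg relation at nonzero level already produces infinitely many $\d$-shifted weights from any vector annihilated by the negative-mode $h_i$'s, and that immediately clashes with Proposition~\ref{prop:dominant+}.
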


\begin{proof}  Without lose of generality, we may assume that $C$ acts by $q^{-r}$ with $r>0$.
Let $\U_{q}(\widehat{\mathfrak{sl}}(M))$ be the subalgebra of $\Uq$ generated by
$X^{\pm}_i(n), K^{\pm 1}_i, h_i(s), C^{\pm1/2}$ with $1\le i\le M-1$, $n, s\in\Z$ and $s\ne 0$. Regard $V$ as an integrable
$\widehat{\U}_{q}(\mathfrak{sl}(M))$-module. Note that $C$ acts on $V$ by $q^{-r}$ with $r>0$.
By \cite[Theorem 5]{cg03}, there exists a weight vector $v\in V$ of weight $\l$ such that
$X^{\pm}_i(n)v=0$ and $h_i(n)v=0$ for all $n<0$ and $1\le i\le M-1$.
Then $h_i(n)v\ne 0$  for all $n>0$ and $1\le i\le M-1$. Thus $\l+n\d\in P(V)$ for all $n>0$.
This contradicts Proposition \ref{prop:dominant+}, completing the proof.
\end{proof}

\begin{remark}
A similar result has long been \cite{JZ, kw01} for affine Lie superalgebras in the classical setting.
\end{remark}

\subsection{}From now on we assume that $N>M=1$.
\begin{lemma}\label{lem:anti-comm}
$X^{\pm}_{1,a}(m)X^{\pm}_{1,a}(n)=-X^{\pm}_{1,a}(n)X^{\pm}_{1,a}(m), \quad 1\le a\le N-1, m,n\in\Z$.
\end{lemma}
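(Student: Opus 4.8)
The statement to prove is that the elements $X^{\pm}_{1,a}(m)$ for fixed $a$ with $1\le a\le N-1$ anticommute among themselves, in the specialisation $N>M=1$. The plan is to derive this from the loop relations in Definition \ref{def:qaff} by induction on $a$. Recall that $X^{\pm}_{1,a}(m)$ is built as the iterated $q$-bracket $[\cdots[[X^{\pm}_1(m),X^{\pm}_2(0)]_{\cdots},\dots,X^{\pm}_a(0)]_{\cdots}$, and that since $M=1$ the odd generators are exactly $X^{\pm}_1(n)$, so $\beta_{1,a}=\es_1-\es_{a+1}$ is an odd root for every $a$; hence $X^{\pm}_{1,a}(m)$ is an odd element, and anticommutativity (rather than commutativity) is the natural parity-consistent statement. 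The base case $a=1$ is the relation $[X^{\pm}_1(m+1),X^{\pm}_1(n)]_{q^{\pm a_{11}}}+[X^{\pm}_1(n+1),X^{\pm}_1(m)]_{q^{\pm a_{11}}}=0$ together with $a_{11}=(\es_1-\es_2,\es_1-\es_2)=0$ (as $M=1$, both $\es_1$ and $\es_2$ sit in the "odd" block, giving $l_1=1$, $l_2=-1$... actually one must track this: $a_{11}=l_1+l_2$ with appropriate signs); in any case $a_{11}$ vanishes in this rank-one situation, so the relation reads $X^{\pm}_1(m+1)X^{\pm}_1(n)+X^{\pm}_1(n)X^{\pm}_1(m+1)+(m\leftrightarrow n)=0$, which, combined with $[X^{\pm}_1(m),X^{\pm}_1(n)]=0$ for $a_{ij}=0$ — wait, that relation is for distinct $i,j$. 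So instead one uses the degree-raising relation to propagate anticommutativity from equal-degree pairs: first note $X^{\pm}_1(m)^2=0$ and $\{X^{\pm}_1(m),X^{\pm}_1(m)\}=0$ trivially, then the Serre-type / exchange relation above gives $\{X^{\pm}_1(m+1),X^{\pm}_1(n)\}=-\{X^{\pm}_1(n+1),X^{\pm}_1(m)\}$, and a standard induction on $|m-n|$ yields $\{X^{\pm}_1(m),X^{\pm}_1(n)\}=0$ for all $m,n$.

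For the inductive step, suppose $X^{\pm}_{1,a-1}(m)X^{\pm}_{1,a-1}(n)=-X^{\pm}_{1,a-1}(n)X^{\pm}_{1,a-1}(m)$ for all $m,n$. I want to deduce the same for $X^{\pm}_{1,a}(m)=[X^{\pm}_{1,a-1}(m),X^{\pm}_a(0)]_{q_a}$. Here $X^{\pm}_a(0)$ is even (since $a\ge 2$ and $M=1$), so the $q$-bracket is an honest $q$-commutator $X^{\pm}_{1,a-1}(m)X^{\pm}_a(0)-q_a^{\pm1}X^{\pm}_a(0)X^{\pm}_{1,a-1}(m)$ up to the correct sign/power. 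Expanding $\{X^{\pm}_{1,a}(m),X^{\pm}_{1,a}(n)\}$ and moving all $X^{\pm}_a(0)$ factors to one side, one needs: (i) the induction hypothesis on $X^{\pm}_{1,a-1}$; (ii) the fact that $X^{\pm}_a(0)$ commutes (or $q$-commutes with a controllable scalar) past $X^{\pm}_{1,a-1}(n)$ in the relevant combinations — this uses the relations $[X^{\pm}_i(m),X^{\pm}_j(n)]=0$ for $a_{ij}=0$ and the exchange relation $[X^{\pm}_i(m+1),X^{\pm}_j(n)]_{q^{\pm a_{ij}}}+[X^{\pm}_j(n+1),X^{\pm}_i(m)]_{q^{\pm a_{ij}}}=0$ for $a_{ij}\ne0$; and (iii) the higher Serre relations, in particular $\mathrm{Sym}_{m,n}[X^{\pm}_i(m),[X^{\pm}_i(n),X^{\pm}_j(k)]_{q^{-1}}]_q=0$ for $a_{ij}=\pm1$, $i\ne M$, which controls $(X^{\pm}_a(0))^2$-type terms. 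Since $M=1$, the special four-term Serre relation involving $X^{\pm}_{M-1}$, $X^{\pm}_M$, $X^{\pm}_{M+1}$ does not arise (there is no index $M-1=0$), which is exactly why the argument works cleanly only in the $M=1$ case.

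The main obstacle I expect is the bookkeeping in step (ii)–(iii): showing that when one expands the nested brackets in $\{X^{\pm}_{1,a}(m),X^{\pm}_{1,a}(n)\}$, the "cross terms" where an $X^{\pm}_a(0)$ from one factor meets an $X^{\pm}_{a-1}(0)$ (buried inside $X^{\pm}_{1,a-1}$) from the other factor cancel. This is precisely the kind of lengthy but mechanical computation the authors relegate to an appendix for Lemmas \ref{key-1} and \ref{key-2}; here it should be shorter because $a$-indexed generators with $|a-a'|\ge2$ simply commute, so only adjacent interactions contribute. A clean way to organise it is to first prove the auxiliary identity $[X^{\pm}_{1,a-1}(m),X^{\pm}_a(0)]_{q_a}$ anticommutes with $X^{\pm}_a(0)$ up to the Serre relation (i.e. $X^{\pm}_{1,a}(m)X^{\pm}_a(0)=-q_a^{\mp1}X^{\pm}_a(0)X^{\pm}_{1,a}(m)$ or similar), then use that together with the induction hypothesis to collapse $\{X^{\pm}_{1,a}(m),X^{\pm}_{1,a}(n)\}$ to a multiple of $\{X^{\pm}_{1,a-1}(m),X^{\pm}_{1,a-1}(n)\}$ conjugated by $X^{\pm}_a(0)$'s, which vanishes by hypothesis. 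One then finishes, as in the base case, by the degree-shift exchange relation to remove any residual dependence on the difference $m-n$. Throughout, the parity of $X^{\pm}_{1,a}$ being odd must be tracked carefully in every $q$-bracket sign, as a single sign error would flip anticommutativity to commutativity.
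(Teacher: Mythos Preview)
Two corrections first. The base case $a=1$ is immediate: the defining relation $[X^{\pm}_i(m),X^{\pm}_j(n)]=0$ for $a_{ij}=0$ does \emph{not} require $i\neq j$, and since $\alpha_1=\alpha_M$ is isotropic ($a_{11}=l_1+l_2=0$) it reads $X^{\pm}_1(m)X^{\pm}_1(n)+X^{\pm}_1(n)X^{\pm}_1(m)=0$ directly. No induction on $|m-n|$ via the degree-shift relation is needed; the paper simply says ``the $a=1$ case is a defining relation of $\Uq$.'' Second, your auxiliary identity carries a spurious sign: the correct statement (Lemma~\ref{lem:relat}(4)) is $[X^{+}_a(0),X^{+}_{1,a}(m)]_{q_a}=0$, i.e.\ $X^{+}_a(0)\,X^{+}_{1,a}(m)=q^{-1}X^{+}_{1,a}(m)\,X^{+}_a(0)$, since $X^{+}_a(0)$ is even. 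With that fix your inductive step does close: writing $E=X^{+}_a(0)$, $Y=X^{+}_{1,a-1}(m)$, $Z=X^{+}_{1,a-1}(n)$, the relation $[E,[Y,E]_{q^{-1}}]_{q^{-1}}=0$ rearranges to $(1+q^{-2})EYE=q^{-1}(E^2Y+YE^2)$, and a direct expansion of $\{[Y,E]_{q^{-1}},[Z,E]_{q^{-1}}\}$ then collapses to multiples of $(YZ+ZY)E^2$ and $E^2(YZ+ZY)$, which vanish by hypothesis. Your final remark about removing ``residual dependence on $m-n$'' is not needed: the step already handles all $m,n$ simultaneously.

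The paper takes a different and somewhat slicker route. Rather than induct on $a$, for every $a\ge 2$ it splits off the \emph{leftmost} factor, $X^{+}_{1,a}(m)=[X^{+}_1(m),X^{+}_{2,a}(0)]_{q^{-1}}$, and computes not the anticommutator but the $q^{-2}$-bracket $[X^{+}_{1,a}(m),X^{+}_{1,a}(n)]_{q^{-2}}$. Two applications of the $q$-Jacobi identities \eqref{q-bracket}, together with $[X^{+}_1(m),X^{+}_1(n)]=0$ and the vanishing of $[X^{+}_{2,a}(0),X^{+}_{1,a}(n)]_{q^{-1}}$ and $[X^{+}_{1,a}(m),X^{+}_{2,a}(0)]_q$ from Lemma~\ref{lem:relat}, give $[X^{+}_{1,a}(m),X^{+}_{1,a}(n)]_{q^{-2}}=-[X^{+}_{1,a}(n),X^{+}_{1,a}(m)]_{q^{-2}}$; since $1+q^{-2}\neq 0$ this forces the anticommutator to vanish. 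What this buys is uniformity in $a$ (one computation, no inductive hypothesis) at the cost of the slightly deeper auxiliary facts about $X^{+}_{2,a}(0)$. Your approach trades that for an induction but relies on essentially the same Serre-derived input, packaged as Lemma~\ref{lem:relat}(4).
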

\begin{proof}  The $a=1$ case is a defining relation of $\Uq$. For $a\ge 2$, we have
\begin{eqnarray*}
{[}X^{+}_{1,a}(m),X^{+}_{1,a}(n)]_{q^{-2}}
=[[X^{+}_{1}(m),X^+_{2,a}(0)]_{q^{-1}},[X^{+}_{1}(n),X^+_{2,a}(0)]_{q^{-1}}]_{q^{-2}}.
\end{eqnarray*}
We can rewrite the right hand side as
\begin{eqnarray*}
\begin{aligned}
{[}X^+_1(m),[X^+_{2,a}(0),X^+_{1,a}(n)]_{q^{-1}}]_{q^{-2}}
+q^{-1}[[X^+_1(m),[X^+_1(n),X^+_{2,a}(0)]_{q^{-1}}]_{q^{-1}},X^+_{2,a}(0)],
\end{aligned}
\end{eqnarray*}
where the first term vanishes by Lemma \ref{lem:relat}. The second term can be expressed as
\begin{eqnarray*}
\begin{aligned}
&q^{-1}[[[X^+_1(m),X^+_1(n)],X^+_{2,a}]_{q^{-2}},X^+_{2.a}(0)]\\
&-q^{-1}[[X^+_1(n),[X^+_1(m),X^+_{2,a}(0)]_{q^{-1}}]_{q^{-1}},X^+_{2.a}(0)],
\end{aligned}
\end{eqnarray*}
where the first term vanishes, as $[X^+_1(m),X^+_1(n)]=0$. By manipulating the second term, we obtain
\begin{eqnarray*}
\begin{aligned}
{[}X^{+}_{1,a}(m),X^{+}_{1,a}(n)]_{q^{-2}}=&-q^{-1}([X^+_1(n),[X^+_{1,a}(m),X^+_{2,a}(0)]_q]_{q^{-2}}\\
&+q[[X^+_1(n),X^+_{2,a}(0)]_{q^{-1}},[X^+_1(m),X^+_{2,a}(0)]_{q^{-1}}]_{q^{-2}}\\
=&-[X^{+}_{1,a}(n),X^{+}_{1,a}(m)]_{q^{-2}}.
\end{aligned}
\end{eqnarray*}
Hence, $X^+_{1,a}(m)X^+_{1,a}(n)=-X^+_{1,a}(n)X^+_{1,a}(m)$.

Similarly, one can show that $X^-_{1,a}(m)X^-_{1,a}(n)=-X^-_{1,a}(n)X^-_{1,a}(m)$.
\end{proof}
\begin{theorem}\label{thm:sl1-N}
Assume that $N>M=1$. Let $V$ be an irreducible integrable $\Uq$-module with finite dimensional weight spaces.
Suppose that $C$ acts by $(q^{-1})^r$ for some non-zero $r\in\Z$.
If $r>0$ (resp. $r<0$), then $V$ is a highest (resp. lowest) weight module.
\end{theorem}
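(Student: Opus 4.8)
The plan is to mimic the zero‑level argument of Section 3, with the roles of the two triangular decompositions interchanged: now the ``positive'' part of the classical Borel of $\widehat{\mathfrak{sl}}(1|N)$ contains the imaginary root directions, and the odd generator is $X^{\pm}_1(n)$ (since $M=1$), with $X^{\pm}_{1,a}(n)$, $1\le a\le N-1$, anti‑commuting among themselves by Lemma \ref{lem:anti-comm}. Assume $r>0$; the case $r<0$ follows by applying the Chevalley‑type antiautomorphism that swaps the two triangular decompositions of $\Uq$. First I would regard $V$ as an integrable module over the subalgebra $\U_{q^{-1}}(\widehat{\mathfrak{sl}}(N))$ generated by the $X^{\pm}_i(n),K^{\pm1}_i,h_i(s)$ with $2\le i\le M+N-1$; by Chari's theorem \cite[Theorem 5]{cg03} (as used in Theorem \ref{thm:level}) there is a nonzero weight vector $v_0\in V$ killed by $X^{+}_i(n)$ for all $i\ne 1$ and all $n\in\Z$. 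Proposition \ref{prop:dominant+}(2) is the crucial extra input at nonzero level: it says that for $\lambda=\mathrm{wt}(v_0)$ there is $K>0$ with $\lambda+\alpha+k\delta\notin P(V)$ for all $k\ge K$ and all $\alpha\in\D_0^2\cup\{0\}$. This is exactly what forces the ``imaginary'' directions to be locally finite on $v_0$, which is what replaces the finite‑dimensionality‑of‑weight‑spaces bookkeeping that powered Lemma \ref{lem:v-vector} at level zero.

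Next I would push $v_0$ up to a vector annihilated by all the $X^{+}_{1,a}(n)$, $1\le a\le N-1$, and by all $X^{+}_i(n)$, $i\ne 1$, mimicking the proof of Proposition \ref{prop:key}: one climbs the ordered set $S=\{(a,b)\}$ (here $a=1$, so effectively one climbs $b=1,\dots,N-1$) using Lemma \ref{key-1} to propagate the vanishing of the $X^+_i(m)$ and of the lower $X^+_{k,l}(m)$, and Lemma \ref{key-2} to replace a vector by one annihilated by a whole root‑vector family $X^{+}_{1,b}(m)$ after at most two applications — the hypotheses \eqref{2} of Lemma \ref{key-2} are met because the $X^{+}_{1,b}(m)$ anti‑commute and square to zero (Lemma \ref{lem:anti-comm}) and because, by Proposition \ref{prop:dominant+}(2), only finitely many of the candidate weights $\mathrm{wt}+b\,k(\es_b)+(\cdots)\delta$ can actually occur. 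Carrying this out for $b=N-1,N-2,\dots,1$ in turn produces a nonzero weight vector $w$ with
\begin{eqnarray*}
&& X^{+}_i(m)w=0,\quad i\ne 1,\ m\in\Z,\\
&& X^{+}_{1,b}(m)w=0,\quad 1\le b\le N-1,\ m\in\Z.
\end{eqnarray*}

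Finally I would kill the remaining odd direction $X^{+}_1(m)$. By the PBW theorem for $\U_q^+$ (Proposition \ref{prop:PBW}, applied to the relevant $M=1$ presentation of $\Uq$ together with $C$ acting by $q^{-r}$), $\U_q v = V$ forces every weight of $V$ strictly above $\mathrm{wt}(w)$ to lie in $\mathrm{wt}(w)+\Z_{\ge0}(\es_1-\es_2)+\Z\delta+$ (contributions that are killed), i.e.\ of a restricted shape analogous to \eqref{eq:is-a-weight}; combining this shape constraint with Proposition \ref{prop:dominant+}(2) — which bounds how far in the $\delta$‑direction weights of a fixed $\D_0^2$‑type can extend — shows that $(X^{+}_1(n_1)\cdots X^{+}_1(n_k))w=0$ for large $k$ and all $n_i\in\Z$, exactly as in the proof of \eqref{semi-positive2}. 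Taking $s$ minimal with this property and choosing $r_1,\dots,r_{s-1}$ with $v:=X^{+}_1(r_1)\cdots X^{+}_1(r_{s-1})w\ne0$ but $X^{+}_1(r)v=0$ for all $r$ gives, as in Theorem \ref{thm:hw}, a vector $v$ annihilated by all $X^{+}_i(m)$, $i\in I$, $m\in\Z$; passing to the $\U_q(0)$‑submodule generated by $v$ and using irreducibility of $V$ as in the proof of Theorem \ref{thm:class} shows $V$ is a highest weight module for the triangular decomposition of Proposition \ref{tri-decomp}. I expect the main obstacle to be the bookkeeping in the inductive step that climbs $S$: one must check that the finiteness needed to invoke Lemmas \ref{key-1} and \ref{key-2} is genuinely supplied by Proposition \ref{prop:dominant+}(2) (and the finite‑dimensionality of weight spaces) rather than by anything special to level zero, and that the commutator identities used there — in particular the mixed Serre relation at the node $M$ and Lemma \ref{lem:anti-comm} — still apply verbatim when $M=1$.
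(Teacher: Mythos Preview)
Your plan is aimed at the wrong target. The vector $v$ you construct would satisfy $X^+_i(m)v=0$ for all $i\in I$ and $m\in\Z$, i.e.\ it would be a highest weight vector for the \emph{loop} triangular decomposition \eqref{loop-tri-decomp}; but the theorem (see the opening sentence of Section~4) asks for highest weight relative to the \emph{standard} decomposition of Proposition~\ref{tri-decomp}, which requires annihilation by all of $\Uq(+)$ --- including $X^-_i(m)$ for $m>0$ and $h_i(r)$ for $r>0$ --- none of which your argument addresses. Invoking Theorem~\ref{thm:class} at the end does not bridge this: that proof stays entirely within the loop decomposition at level zero, and if it applied here it would embed $V$ in a quantum loop module, forcing $C$ to act trivially and contradicting $r\ne0$. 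Relatedly, your first step already misreads \cite[Theorem~5]{cg03}: at nonzero level it produces a vector annihilated by $X^{\pm}_i(n)$ and $h_i(n)$ for all $n$ of one fixed sign (exactly as used in the proof of Theorem~\ref{thm:level}), not by $X^+_i(n)$ for all $n\in\Z$; the latter is the level-zero statement cited before \eqref{0-hw}.

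The paper's proof takes a genuinely different route that respects the standard Borel. Starting from an \emph{arbitrary} weight vector $v$, it shows (via Lemma~\ref{lem:anti-comm} and the weight bound of Proposition~\ref{prop:dominant+}) that $\mathcal{N}^+v$ is finite dimensional, where $\mathcal{N}^+$ is generated by the odd root vectors $X^+_{1,a}(0)$ and $X^{\pm}_{1,a}(n)$ with $n>0$. Adjoining the positive-degree $h_1(n)$ and $K_1^{\pm1}$ still gives a finite-dimensional space $W$; then $\mathcal{W}:=\U_{q^{-1}}(\widehat{\mathfrak{sl}}(N))\cdot W$ is an integrable $\U_{q^{-1}}(\widehat{\mathfrak{sl}}(N))$-module with $\widehat{\U}^+_{q^{-1},N}W$ finite dimensional, so by \cite[Proposition~1.7]{cg03} it is a finite direct sum of integrable highest weight modules and hence has a maximal weight. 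Since $V=\mathcal{N}^-\,\U\big(\bigoplus_{n<0}\K h_1(n)\big)\,\mathcal{W}$, this maximal weight is the standard highest weight of $V$. Lemmas~\ref{key-1} and~\ref{key-2} are not used at all.
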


\begin{proof}
Without lose of generality, we may assume that $r>0$.
\begin{claim}\label{claim:fd+}
 For any weight vector $v\in V$, the  following vector space, spanned by
\[
\left\{X^+_{1,a_1}(m_1)\cdots X^+_{1,a_k}(m_k)v\ \left| \
\begin{array}{l} 1\le a_1\le \cdots \le a_k\le N,  k\ge 0,  \\
m_i\ge 0,  m_i<m_{i+1}
\mbox{ when }a_i=a_{i+1}
\end{array}\right.
\right\},
\]
is finite-dimensional.
\end{claim}

By Proposition \ref{prop:PBW},  it is sufficient to prove that, for $1\le p\le N$,
the vector space $S^+_p(v)$ spanned by
$
\{X^+_{1,p}(m_1)\cdots X^+_{1,p}(m_r)v \ | \ r\in\Z_{\ge 0}, m_i\ge 0\}
$
is finite-dimensional.

For $S^+_1(v)$, which is spanned by
$
\{X^{+}_{1}(m_1)\cdots X^{+}_{1}(m_r)v\ | \ 0\le m_1<\cdots <m_r, r\in\Z_{\ge 0}\},
$
we consider
\begin{eqnarray*}
X^+_1(n)v&=&\frac{n(q-q^{-1})}{q^{-n}-q^n}C^{\frac{|n|}{2}}[h_{2}(n), X^+_1(0)]v\\
&=&\frac{n(q-q^{-1})}{q^{-n}-q^n}C^{\frac{|n|}{2}}\left(h_{2}(n)X^+_1(0)v-X^+_1(0)h_{2}(n)v\right).
\end{eqnarray*}
From Proposition \ref{prop:dominant+}  there exists $n_0>0$ such that $h_{2}(n)v=0$ and
$h_{2}(n)X^+_1(0)v=0$ for all   $n>n_0$.
Now it is easy to see that
$S^+_1(v)$ is spanned by
\[
\{X^{+}_{M}(m_1)\cdots X^{+}_{M}(m_r)v\ |\ r\ge 0, 0\le m_i\le n_0, m_i\ne m_j, i\ne j\},
\]
which is clearly finite-dimensional.

For any $r\ge 1$ and $m_i\ge 0$, observe that
\begin{equation}\label{fd-1}
X^{+}_{1}(m_1)\cdots X^{+}_{1}(m_r)v=0,
\end{equation}
if there exists $j (1\le j\le r)$ such that $m_j>n_0$. From Proposition \ref{prop:dominant+}
there exists $K_v>0$ such that $X^+_i(k)v=0$ for all $ k\ge K_v, 2\le i\le N$.
Applying
$(X^+_p(K_v))^r\cdots (X^+_{M-1}(K_v))^r$
to
\eqref{fd-1},
and using \eqref{ii} \eqref{ii+1+} repeatedly, we have
\[
X^+_{1,p}(m_1+(M-p)K_v)
\cdots X^+_{1,p}(m_r+(M-p)K_v)v=0.
\]
if there exists $j (1\le j\le r)$ such that $m_j>n_0$.
Combining this with Lemma \ref{lem:anti-comm}, we conclude that
the vector space $S^+_p(v)$ is spanned by
\[
\{X^+_{1,p}(m_1)\cdots X^+_{1,p}(m_r)v\ |\ 0\le m_1<\cdots<m_{r}\le (M-p)K_v+n_0, r\in\Z_{\ge 0}\},
\]
which is finite-dimensional. This completes the proof of  Claim 1.

In a similar way, one can prove
\begin{claim}\label{claim:fd-}
 For any weight vector $v\in V$, the vector space spanned by the following set
\[
\left\{X^-_{1,a_1}(m_1)\cdots X^-_{1,a_k}(m_k)v \left|
\begin{array}{l}  1\le a_1\le \cdots \le a_k\le N, k\ge 0,\\
m_i> 0,  m_i<m_{i+1}\mbox{ when }a_i=a_{i+1}
\end{array}\right.
\right\},
\]
is finite-dimensional.
\end{claim}

Let $\mathcal{N}^+$ (resp. $\mathcal{N}^-$) be the subalgebra of $\Uq$ generated by
 $X^+_{1,a}(0), X^{\pm }_{1,a}(n),  n>0, 1\le a\le N$ (resp.  $ X^+_{1,a}(0), X^{\pm }_{1,a}(n),  n<0, 1\le a\le N$).
Combining Proposition \ref{prop:PBW} with Claims 1 and 2, we obtain
\begin{claim}\label{claim:fd}
For any weight vector $v\in V$, the space $\mathcal{N}^+v$ is finite-dimensional.
\end{claim}

For any weight vector $v\in V$, set
$
W=\K[K_1,K^{-1}_1] \U\left(\bigoplus\limits_{n>0}\K h_1(n)\right) \mathcal{N}^+ v$.   From
Proposition \ref{prop:dominant+} and Claim \ref{claim:fd}, one can see that
$W$ is finite-dimensional.

Define the following subalgebras of $\Uq$:
\begin{itemize}
\item $\U_{q^{-1}}(\widehat{\mathfrak{sl}}(N))$ is generated by
$X^{\pm}_i(n), h_i(r), K^{\pm 1}_i,  C^{\pm 1/2}, D^{\pm 1},$ $n,r\in\Z, r\ne 0, 2\le i\le N$;
\item ${\widehat\U}^+_{q^{-1}, N}$ by $\{ X^+_i(0), X^{\pm}_i(n),  h_i(n)  \mid n>0, 2\le i\le N\}$;
\item ${\widehat\U}^-_{q^{-1}, N}$ by $\{X^-_i(0), X^{\pm}_i(n), h_i(n) \mid n<0, 2\le i\le N\}$; and
\item ${\widehat\U}^0_{q^{-1}, N}$
by $\{K^{\pm 1}_i,  C^{\pm 1/2}, D^{\pm 1}\mid 2\le i\le N\}$.
\end{itemize}
Now consider
$
\mathcal{W}={\widehat\U}^-_{q^{-1}, N}{\widehat\U}^0_{q^{-1}, N}
{\widehat\U}^+_{q^{-1}, N}W.
$
Clearly,
 $\mathcal{W}$ is an integrable $\U_{q^{-1}}(\widehat{\mathfrak{sl}}(N))$-module.
Using proposition \ref{prop:dominant+}, one can show that
${\widehat\U}^+_{q^{-1}, N}W$ is finite-dimensional.
By \cite[Proposition 1.7]{cg03}, we have
$
\mathcal{W}\cong \bigoplus_\l m_\l V(\l),
$
where $V(\l)$ are irreducible integrable $\U_{q^{-1}}(\widehat{\mathfrak{sl}}(N))$-modules with highest weight $\l$
and multiplicities $m_\l\in\Z_{\ge 0}$, which are nonzero
for only finitely many $\l$. Thus $\mathcal{W}$ has a maximal weight.
Since $V$ is simple, $V=\mathcal{N}^-\U\left(\bigoplus\limits_{n<0}\K h_1(n)\right)\mathcal{W}$. Thus
this maximal weight of $\mathcal{W}$ is also the highest weight of $V$.
\end{proof}

Recall that for ordinary quantum affine algebras,
a simple highest (resp. lowest) weight module is integrable if and only if its
highest (resp. lowest) weight is integral dominant (resp. anti-dominant)  \cite{cp91, L93}.

\begin{corollary}\label{cro:criterion}
Assume that $N>M=1$. A simple $\Uq$-module $V$ at nonzero level is integrable
with finite dimensional weight spaces if and only if $V$ is
\begin{enumerate}
\item  a highest weight module
with a highest weight which is integral dominant with respect to $\U_q(\widehat{\mathfrak{sl}}(M|N)_{\bar 0})$,  or
\item  a lowest weight module
with a lowest weight which is integral anti-dominant
with respect to $\U_q(\widehat{\mathfrak{sl}}(M|N)_{\bar 0})$.
\end{enumerate}
\end{corollary}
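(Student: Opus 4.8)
The plan is to prove Corollary \ref{cro:criterion} by combining Theorem \ref{thm:sl1-N} with the classical integrability criterion for highest/lowest weight modules over the even subalgebra $\U_q(\widehat{\mathfrak{sl}}(M|N)_{\bar0})$. First I would dispose of one direction: if $V$ is a simple highest weight $\Uq$-module whose highest weight $\lambda$ is integral dominant with respect to $\U_q(\widehat{\mathfrak{sl}}(M|N)_{\bar0})$, then by \cite{cp91, L93} the $\U_q(\widehat{\mathfrak{sl}}(M))$-module and the $\U_{q^{-1}}(\widehat{\mathfrak{sl}}(N))$-module generated by a highest weight vector $v$ are integrable; since $V$ is generated over $\Uq$ by $v$, and integrability of the even generators propagates through the $\Z_2$-graded PBW basis (the odd root vectors $X^\pm_M(n)$ square to zero, so their local nilpotency is automatic), one checks that all $X^\pm_i(m)$ act locally nilpotently on $V$ and that the weight spaces are finite dimensional — the latter because $P(V)$ lies in $\lambda - \widehat{Q}^+$ and each graded piece is controlled by the finite-dimensionality on the even part. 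The lowest weight case is entirely symmetric, applying the opposite triangular decomposition.

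For the converse, suppose $V$ is simple, integrable with finite dimensional weight spaces, at nonzero level. By Theorem \ref{thm:level} we must have $N > M = 1$ (or symmetrically $M > N = 1$, handled the same way), so Theorem \ref{thm:sl1-N} applies: if $C$ acts by $(q^{-1})^r$ with $r > 0$ then $V$ is a highest weight module, and if $r < 0$ it is a lowest weight module, both relative to the triangular decomposition of Proposition \ref{tri-decomp}. Take the $r>0$ case; let $v$ be a highest weight vector with weight $\lambda$. Then the $\U_q(\widehat{\mathfrak{sl}}(M|N)_{\bar0})$-submodule $\U_q(\widehat{\mathfrak{sl}}(M|N)_{\bar0})v$ is a highest weight module for that even quantum affine algebra (its highest weight is the restriction of $\lambda$), and it inherits integrability and finite-dimensionality of weight spaces from $V$. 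By the classical criterion \cite{cp91, L93} its highest weight is necessarily integral dominant, which is exactly condition (1). The $r<0$ case gives condition (2) symmetrically.

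The key steps in order: (i) the easy direction — induce integrability from $v$ to all of $V$ using the PBW basis of Proposition \ref{prop:PBW} (really the $\Uq$-analogue) and the nilpotency of odd root vectors; (ii) invoke Theorem \ref{thm:level} to force $M=1$ or $N=1$; (iii) invoke Theorem \ref{thm:sl1-N} to get the highest- or lowest-weight structure; (iv) restrict to the even subalgebra and apply \cite{cp91, L93}. I expect the main obstacle to be step (i), specifically showing that local nilpotency of $X^-_M(m)$ on $V$ follows — here one cannot argue directly from $\mathfrak{sl}_2$-theory since $X^\pm_M$ generate an odd $\mathfrak{sl}(1|1)$-type subalgebra, but since $X^-_M(m)^2 = 0$ in $\Uq$ (a defining relation, as in Lemma \ref{lem:anti-comm} for $a=1$), local nilpotency is trivially satisfied; the remaining subtlety is that a product of many distinct $X^-_M(m_i)$ could a priori be nonzero on a fixed vector, but finite-dimensionality of the relevant weight space of $V$ (which we are assuming) bounds the length of such products, exactly as in the proof of Lemma \ref{lem:v-vector}. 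Once this is in hand the corollary follows formally.
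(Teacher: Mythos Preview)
Your proposal is correct and follows essentially the same approach as the paper's proof: for the forward direction invoke Theorem~\ref{thm:sl1-N} to get the highest/lowest weight structure and then apply the classical criterion \cite{cp91, L93} to the even subalgebra, while for the converse note that highest weight modules automatically have finite-dimensional weight spaces and that integrability over $\U_q(\widehat{\mathfrak{sl}}(M|N)_{\bar0})$ (guaranteed by dominance) suffices since the odd generators $X^\pm_M(m)$ square to zero. Two minor points: your invocation of Theorem~\ref{thm:level} is redundant because $N>M=1$ is already hypothesised in the corollary, and your ``remaining subtlety'' about long products $X^-_M(m_1)\cdots X^-_M(m_k)$ is irrelevant to integrability, which only requires each individual $X^\pm_i(m)$ to act locally nilpotently---so that discussion (including the circular-sounding ``which we are assuming'') can simply be dropped.
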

\begin{proof}
As simple highest or lowest weight $\Uq$-modules defined with respect
to the triangular decomposition of $\Uq$ given in Proposition \ref{tri-decomp}
automatically have finite dimensional weight spaces, the corollary
immediately follows from Theorem \ref{thm:sl1-N} and the  preceding remarks
on integrable highest weight modules for ordinary quantum affine algebras.
\end{proof}


\appendix
\section{Relations in $\U_q$}
We present some technical results which are used in the main body of the paper.

The following identities are valid for the $q$-bracket.
\begin{lemma}
For any homogeneous elements $a, b, c$ of $\Uq$, and nonzero scalars $u, v, x$,
\begin{eqnarray}\label{q-bracket}
\begin{aligned}
&[a, bc]_v=[a,b]_xc+(-1)^{|a||b|}x b[a,c]_{\frac{v}{x}},\\
&[a b, c]_v=a[b,c]_x+(-1)^{|b||c|}x[a,c]_{\frac{v}{x}}b,\\
&[a, [b, c]_u]_v=[[a,b]_x, c]_{\frac{uv}{x}}+(-1)^{|a||b|}x[b,[a,c]_{\frac{v}{x}}]_{\frac{u}{x}},\\
&[[a, b]_u, c]_v=[a,[b,c]_x]_{\frac{uv}{x}}+(-1)^{|b||c|}x[[a,c]_{\frac{v}{x}},b]_{\frac{u}{x}}.
\end{aligned}
\end{eqnarray}
\end{lemma}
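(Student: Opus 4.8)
The plan is to verify each of the four identities by a direct expansion, using nothing more than the definition $[X,Y]_u = XY - (-1)^{|X||Y|}uYX$, associativity of the product in $\Uq$, and the additivity $|bc| = |b|+|c|$ of the $\Z_2$-grading.

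I would begin with the two twisted Leibniz rules in the first and second lines, which say that $[a,-]_{\bullet}$ and $[-,c]_{\bullet}$ act as skew derivations. For the first line, expand the left side as $a(bc) - (-1)^{|a|(|b|+|c|)}v(bc)a$ and the right side as $\bigl(ab - (-1)^{|a||b|}x\,ba\bigr)c + (-1)^{|a||b|}x\,b\bigl(ac - (-1)^{|a||c|}(v/x)\,ca\bigr)$; the two terms proportional to $x\,bac$ cancel, and what remains is exactly $abc - (-1)^{|a|(|b|+|c|)}v\,bca$. The auxiliary scalar $x$ enters only through the cancelling middle terms, which is why it may be chosen arbitrarily. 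The second line is obtained from this computation by reflecting the roles of left and right multiplication.

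For the third and fourth lines I would expand both sides completely in the six associative monomials $abc,\,acb,\,bca,\,bac,\,cab,\,cba$. On the left of the third line the coefficients are, in order, $1$, $-(-1)^{|b||c|}u$, $-(-1)^{|a|(|b|+|c|)}v$, $0$, $0$, $(-1)^{|a|(|b|+|c|)+|b||c|}uv$; expanding the two summands on the right and adding them, the $bac$ coefficients cancel against each other, the $cab$ coefficients cancel against each other, and the remaining four coefficients match the left-hand side after using $(-1)^{|a|(|b|+|c|)} = (-1)^{|a||b|}(-1)^{|a||c|}$ and $(-1)^{2|a||b|}=1$. The fourth line follows from the third by interchanging $a$ with $c$ and relabelling the scalars, or by the same direct expansion; alternatively, both can be deduced by applying the first two identities to $[b,c]_u = bc - (-1)^{|b||c|}u\,cb$, but the brute-force expansion is shortest. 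The only point needing a little care is the bookkeeping of $\Z_2$-signs, for which the two relations just quoted suffice; there is no real obstacle, and the lemma is recorded only to streamline the bracket manipulations in the appendices.
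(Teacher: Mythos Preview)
Your proposal is correct. The paper actually states this lemma without any proof; it is simply recorded as a list of identities for the $q$-bracket, to be used later in Appendices~B and~C. Your direct verification by expanding both sides in the associative monomials, using only the definition $[X,Y]_u = XY - (-1)^{|X||Y|}uYX$ and the additivity of the $\Z_2$-grading, is exactly the intended (and essentially the only) argument.
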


We can derive from Definition \ref{def:qaff} the following relations:
\begin{eqnarray}
&&\label{ii}
\begin{aligned}
&[[X^+_i(m),[X^+_i(m),X^+_j(k)]_{q^{-1}}]_q\\
&=[[X^+_i(m),[X^+_i(m),X^+_j(k)]_q]_{q^{-1}}=0, \quad i\ne M, \  a_{i,j}=\pm 1,
\end{aligned}\\
&&\label{MM}
\begin{aligned}
&[[X^+_M(m),[X^+_M(m),X^+_j(k)]_{q^{-1}}]_{q^{-1}}\\
&=[X^+_M(m),[X^+_M(m),X^+_j(k)]_q]_q=0, \quad a_{M,j}=\pm 1.
\end{aligned}
\end{eqnarray}
\begin{eqnarray}\label{ii+1+}
[X^+_{i- 1}(m),X^+_{i}(n)]_{q_{i}} &=& q_{i}^k[X^+_{i- 1}(m+k),X^+_{i}(n-k)]_{q_{i}}\\\nonumber
 &+ &\sum\limits_{s=1}^k q_{i}^{s-1}(q_{i}^2-1)X^+_{i}(n-s)X^+_{i- 1}(m+s),
\end{eqnarray}
\begin{eqnarray}\label{ii+1-}
[X^+_{i-1}(m),X^+_{i}(n)]_{q_{i}} &=& q_{i}^{-k}[X^+_{i- 1}(m-k),X^+_{i}(n+k)]_{q_{i}}\\\nonumber
& -&  \sum\limits_{s=0}^{k-1}q_{i}^{-s-1}(q_{i}^2-1)X^+_{i}(n+s)X^+_{i- 1}(m-s).
\end{eqnarray}
\begin{eqnarray}\label{MM+1b}
[X^+_M(m),X^+_{M+1}(n)]_q &= & q^k[X^+_{M}(m+k),X^+_{M+1}(n-k)]_q\\\nonumber
& + & \sum\limits_{s=0}^{k-1}q^{s}(1-q^2)X^+_{M}(m+s)X^+_{M+1}(n-s).
\end{eqnarray}
\begin{eqnarray}\label{>M+}
[X^+_{i+1}(m),X^+_{i}(n)]_{q} &=& q^k[X^+_{i+1}(m-k),X^+_{i}(n+k)]_{q}\\\nonumber
&+ &\sum\limits_{s=1}^k q^{s-1}(q^2-1)X^+_{i}(n+s)X^+_{i+1}(m-s),\quad i\ge M.
\end{eqnarray}
\begin{eqnarray}\label{>M-}
[X^+_{i+1}(m),X^+_{i}(n)]_{q} &=& q^{-k}[X^+_{i+1}(m+k),X^+_{i}(n-k)]_{q}\\\nonumber
& -&  \sum\limits_{s=0}^{k-1}q^{-s-1}(q^2-1)X^+_{i}(n-s)X^+_{i+1}(m+s),\quad i\ge M.
\end{eqnarray}

Combining \eqref{ii+1+} with \eqref{ii+1-},  \eqref{>M+} with \eqref{>M-}, respectively, we have

\begin{eqnarray}\label{ii+1}
[X^+_{i-1}(m),X^+_{i}(n)]_{q_{i}} &=& q_{i}^{\pm k}[X^+_{i- 1}(m\pm k),X^+_{i}(n\mp k)]_{q_{i}}\\\nonumber
& +&  \sum\limits_{s=0}^{|k|}c_sX^+_{i}(n\mp s)X^+_{i- 1}(m\pm s)\quad c_s\in \K,
\end{eqnarray}
\begin{eqnarray}
\label{>M}
[X^+_{i+1}(m),X^+_{i}(n)]_{q} &=& q^{\pm k}[X^+_{i+1}(m\mp k),X^+_{i}(n\pm k)]_{q}\\\nonumber
& +&  \sum\limits_{s=0}^{|k|}c_sX^+_{i}(n\pm s)X^+_{i+1}(m\mp s),\quad i\ge M, c_s\in \K.
\end{eqnarray}

We have the following result.
\begin{lemma}\label{lem:relat}
\begin{enumerate}
\item $[[[X^+_{i-1}(m),X^+_i(n)]_{q_{i}},X^+_{i+1}(k)]_{q_{i+1}},X^+_i(n)]=0,\quad i\ne M$.

\item $[X^+_i(0), X^+_{a,b}(n)]=0, \quad a<i<b, n\in\Z.$

\item $[X^+_M(0), X^+_{a,M}(n)]_{q^{-1}}=0, \quad a<M, n\in\Z.$

\item $[X^+_b(0), X^+_{a,b}(n)]_{q_b}=0, \quad b\ne M, n\in\Z.$
\end{enumerate}
\end{lemma}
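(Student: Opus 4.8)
The plan is to establish part (1) first, directly from the defining relations of $\Uq$, and then to deduce (2)--(4) by a single induction on the chain length $b-a$. In each inductive step I would peel off the last factor $X^+_b(0)$ from $X^+_{a,b}(n)=[X^+_{a,b-1}(n),X^+_b(0)]_{q_b}$, apply a Jacobi identity from \eqref{q-bracket}, and discard the resulting terms using: (a) the inductive hypothesis for the shorter chain $X^+_{a,b-1}(n)$; (b) the commutation relations $[X^+_i(p),X^+_j(r)]=0$ valid when $a_{ij}=0$; (c) the rank-two Serre relations \eqref{ii} and \eqref{MM}; and, at the one configuration where the acting generator is adjacent to the top index $b$ of the chain, part (1) itself (or, when the odd node $M$ sits at that junction, the degree-four $M$-Serre relation from Definition \ref{def:qaff}). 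A one-line auxiliary induction also establishes that $X^+_j(0)$ commutes with $X^+_{a,b}(n)$ whenever $j\ge b+2$ or $j\le a-2$, which is used repeatedly.

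For part (1): since $a_{i-1,i+1}=0$ we have $[X^+_{i-1}(m),X^+_{i+1}(k)]=0$, and the Jacobi identities of \eqref{q-bracket}, with the free scalar chosen to make this vanishing commutator drop out, yield
\[
[[X^+_{i-1}(m),X^+_i(n)]_{q_i},X^+_{i+1}(k)]_{q_{i+1}}=[X^+_{i-1}(m),[X^+_i(n),X^+_{i+1}(k)]_{q_{i+1}}]_{q_i}.
\]
I would then commute the remaining $X^+_i(n)$ inward, repeatedly invoking \eqref{q-bracket} with the free scalar tuned so that sub-brackets of the shape $[[X^+_{i-1}(m),X^+_i(n)]_{q_i},X^+_i(n)]_{q_i^{-1}}$ and $[[X^+_i(n),X^+_{i+1}(k)]_{q_{i+1}},X^+_i(n)]_{q_{i+1}}$ appear; after rewriting each with the anti-symmetry rule $[A,B]_x=-(-1)^{|A||B|}x[B,A]_{1/x}$ and matching the exponents $q_i,q_{i+1}\in\{q^{\pm1}\}$ to the two presentations of \eqref{ii}, these vanish. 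Where the parameters refuse to line up by $q$-bracket gymnastics alone, I would use the full ${\rm Sym}$-type Serre relations together with the shift identities \eqref{ii+1+}--\eqref{>M} to redistribute the loop parameters before contracting; the cases $i-1=M$ or $i+1=M$ go through identically with \eqref{MM} in place of \eqref{ii}.

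For parts (2)--(4): the base cases reduce directly to \eqref{ii}/\eqref{MM} after the anti-symmetry rewrite — for (4) with $b=a+1$ one has $X^+_{a,a+1}(n)=[X^+_a(n),X^+_{a+1}(0)]_{q_{a+1}}$ and $[X^+_{a+1}(0),[X^+_a(n),X^+_{a+1}(0)]_{q_{a+1}}]_{q_{a+1}}$ is, up to a scalar, one of the two forms of \eqref{ii}; for (3) with $a=M-1$ one uses \eqref{MM} together with the fact that $X^+_M(0)^2=0$ forces $[X^+_M(0),X^+_M(0)]_x=0$ for every scalar $x$, which makes several Jacobi terms vanish freely. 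In the inductive step, for (2) with $i<b-1$ and for (4) in general, after peeling $X^+_b(0)$ the surviving term has the acting generator commuting with the relevant lower part of the chain, so it collapses by the inductive hypothesis; for (2) with $i=b-1$ (and its odd analogue in (3)) I would use the inductive hypothesis to strip the commuting tail $X^+_{a,b-2}(n)$, after which the expression is exactly (a specialization of) part (1) — or the $M$-Serre relation when the odd node lands at the junction.

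The main obstacle is part (1): each Jacobi step splits the expression into two summands and one must choose the free scalar to kill one of them via an already-available relation while leaving the other in usable shape, but a naive attempt tends to loop back to the original expression, so closing the argument requires either a carefully chosen order of reductions or the auxiliary use of the shift relations \eqref{ii+1+}--\eqref{>M} to force the loop parameters into alignment. The other recurring nuisance, present throughout (1)--(4), is the bookkeeping forced by the odd generator $X^+_M$: the Koszul signs in \eqref{q-bracket}, the identity $X^+_M(0)^2=0$, and the fact that the $q$-brackets attached to the node $M$ carry $q^{\pm1}$ in positions that differ from the even case; this is why the cases involving $M$ must be isolated at each stage.
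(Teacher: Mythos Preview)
Your inductive framework for parts (2)--(4) is exactly what the paper's very terse proof means: the paper writes only ``Part (2) follows from (1), part (3) follows from \eqref{MM}, and part (4) follows from \eqref{ii}'', and the peeling-off-$X^+_b(0)$ / Jacobi / commute-away-the-tail routine you describe is precisely how one cashes those statements out. So on (2)--(4) you and the paper agree.

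The one substantive difference is part (1). The paper does \emph{not} prove it: it simply cites \cite[Lemma~6.1.1]{y94} and moves on. Your sketch for (1) is a reasonable plan but, as you yourself flag, it is not closed --- you do not exhibit the specific sequence of Jacobi reductions that terminates rather than cycling. One remark that may simplify your attempt: both occurrences of $X^+_i$ in (1) carry the \emph{same} loop parameter $n$, so the shift identities \eqref{ii+1+}--\eqref{>M} should not be needed at all; the identity is a rank-two statement that follows from \eqref{ii} (or \eqref{MM} when $i\pm1=M$) and the $q$-Jacobi rules alone, with the free scalar chosen at each step to isolate a double $X^+_i(n)$-bracket on a single neighbour. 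Your invocation of the full ${\rm Sym}$-type Serre relation and the loop-parameter redistribution is therefore likely over-engineering. Either complete that rank-two reduction explicitly or, as the paper does, take (1) from Yamane; the rest of your argument then goes through.
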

\begin{proof}
Part (1) can be found in \cite[Lemma 6.1.1]{y94}. Part (2) follows from (1),
part (3) follows from \eqref{MM}, and part (4)  follows from \eqref{ii}.
\end{proof}

\section{Proof of Lemma \ref{key-1}}\label{pf:key-1}
\begin{proof}[Proof of Lemma \ref{key-1}]
Set $v^{(t-1)}_{a,b}:=X^+_{a,b}(n_{t-1})\cdots X^+_{a,b}(n_1)v_{a-1,b-1}, t=1,\dots,p+1. $
We use induction on $t$ starting from the given case $t=1$. Assume that
\begin{equation}\label{eq:induct}
X^+_{i}(m)v^{(t-1)}_{a,b}=X^+_{k,l}(m)v^{(t-1)}_{a,b}=0 \mbox{ for all } i\ne M,  m\in \Z, (k,l)> (a,b).
\end{equation}

First we want to prove that $X^+_{i}(m)v^{(t)}_{a,b}=0$ for all $i\ne M.$

If $i>b+1$ or $i<a-1$, then $[X^+_i(m),X^+_{a,b}(n)]=0$. Thus $X^+_i(m)v^{(t)}_{a,b}=0$.

If $i=a-1$, we have
\begin{eqnarray*}
X^+_{a-1}(m)v^{(t)}_{a,b} &=&  [X^+_{a-1}(m), X^+_{a,b}(n_{t})]_{q_{a}}v^{(t-1)}_{a,b}\\
&=&  [[X^+_{a-1}(m), X^+_{a}(n_{t})]_{q_{a}},X^+_{a+1,b}(0)]_{q_{a+1}}v^{(t-1)}_{a,b}.
\end{eqnarray*}
Using equation \eqref{ii+1}, we can rewrite the right hand side as
\begin{eqnarray*}
\begin{aligned}
&q_a^{ n_{t}}[[X^+_{a-1}(m+n_{t})), X^+_{a}(0)]_{q_{a}},X^+_{a+1,b}  (0)]_{q_{a+1}}v^{(t-1)}_{a,b}\\
&+\sum\limits_{s=0}^{|n_{t}|} c_s[[X^+_{a}(n_t\mp s)X^+_{a-1}(m\pm s),X^+_{a+1,b}(0)]_{q_{a+1}}v^{(t-1)}_{a,b}\\
&= q_a^{n_{t}}X^+_{a-1,b}(m+n_{t})v^{(t-1)}_{a,b}+\sum\limits_{s=0}^{|n_{t}|}c_sX^+_{a,b}(n_{t}\mp s)X^+_{a-1}(m\pm s)v^{(t-1)}_{a,b}\\
&= 0.
\end{aligned}
\end{eqnarray*}

If $i=b+1$, we consider the case with $a=M-1$ and $b=M+1$ as an example,
and the proof for the general case is similar. By \eqref{>M+} and \eqref{>M-}, we have
\begin{eqnarray*}
\begin{aligned}
&X^+_{M+2}(m)v^{(t)}_{M-1,M+1}=X^+_{M+2}(m)X^+_{M-1,M+1}(n_t)v^{(t-1)}_{M-1,M+1}\\
&=[X^+_{M+2}(m), [X^+_{M-1,M}(n_t), X^+_{M+1}(0)]_{q^{-1}}]_q v^{(t-1)}_{M-1,M+1}\\
&=[X^+_{M-1,M}(n_t), [X^+_{M+2}(m),X^+_{M+1}(0)]_q]_{q^{-1}}v^{(t-1)}_{M-1,M+1}\\
&=q^{m}[X^+_{M-1,M}(n_t), [X^+_{M+2}(0),X^+_{M+1}(m)]_q]_{q^{-1}}v^{(t-1)}_{M-1,M+1}\\
&+\sum\limits_{s=0}^{|m|} c_s    [X^+_{M-1,M}(n_t), X^+_{M+1}(\pm s)]_{q^{-1}} X^+_{M+2}(m\mp s)v^{(t-1)}_{M-1,M+1},
\end{aligned}
\end{eqnarray*}
where the second term on the right hand side vanishes by \eqref{eq:induct}.  We can rewrite first term as
$-q^{1+m}[[[X^+_{M-1}(n_t), X^+_M(0)]_q,
X^+_{M+1}(m)]_{q^{-1}}, X^+_{M+2}(0)]_{q^{-1}}v^{(t-1)}_{M-1,M+1}$, which, by \eqref{q-bracket}, is equal to
\begin{eqnarray*}
\begin{aligned}
&-q^{1+m}[[[X^+_{M-1}(n_t), [X^+_M(0),X^+_{M+1}(m)]_{q^{-1}}]_q, X^+_{M+2}(0)]_{q^{-1}}v^{(t-1)}_{M-1,M+1}\\
&=q^{m}[[[X^+_{M-1}(n_t), [X^+_{M+1}(m),X^+_M(0)]_{q}]_q, X^+_{M+2}(0)]_{q^{-1}}v^{(t-1)}_{M-1,M+1}.
\end{aligned}
\end{eqnarray*}
Using \eqref{>M}, we can cast the right hand side into
\begin{eqnarray*}
\begin{aligned}
&q^{2m}[[[X^+_{M-1}(n_t), [X^+_{M+1}(0),X^+_M(m)]_{q}]_q, X^+_{M+2}(0)]_{q^{-1}}v^{(t-1)}_{M-1,M+1}\\
&+\sum\limits_{k=0}^{|m|} c'_k[[[X^+_{M-1}(n_t), X^+_M(m\pm k)X^+_{M+1}(\mp k)]_q, X^+_{M+2}(0)]_{q^{-1}}v^{(t-1)}_{M-1,M+1},
\end{aligned}
\end{eqnarray*}
where the second term vanishes by \eqref{eq:induct}, and the first term can be rewritten as
\[
-q^{1+2m}[[[X^+_{M-1}(n_t), X^+_M(m)]_q,X^+_{M+1}(0)]_{q^{-1}},
X^+_{M+2}(0)]_{q^{-1}}v^{(t-1)}_{M-1,M+1}.
\]
By \eqref{ii+1}, this can be expressed as
\begin{eqnarray*}
\begin{aligned}
&-q^{1+3m}[[[X^+_{M-1}(n_t+m), X^+_M(0)]_q,X^+_{M+1}(0)]_{q^{-1}}, X^+_{M+2}(0)]_{q^{-1}}v^{(t-1)}_{M-1,M+1}\\
&+\sum\limits_{l=0}^{|m|} c''_l[[X^+_{M}(m\pm l) X^+_{M-1}(n_t\mp l),X^+_{M+1}(0)]_{q^{-1}}, X^+_{M+2}(0)]_{q^{-1}}v^{(t-1)}_{M-1,M+1}\\
&=-q^{1+3m}X^+_{M-1,M+2}(n_t+m)v^{(t-1)}_{M-1,M+1}=0.
\end{aligned}\\
\end{eqnarray*}

For $i=a (a\ne M)$, we obviously have
\begin{eqnarray*}
\begin{aligned}
&X^+_{a}(m)v^{(t)}_{a,b}= X^+_{a}(m) X^+_{a,b}(n_{t})v^{(t-1)}_{a,b}= [X^+_{a}(m), X^+_{a,b}(n_{t})]_{q^{-1}_{a+1}}v^{(t-1)}_{a,b}\\
&=[[[X^+_{a}(m), [X^+_{a}(n_{t}),X^+_{a+1}(0)]_{q_{a+1}}]_{q^{-1}_{a+1}},
X^+_{a+2,b}(0)]_{q_b}v^{(t-1)}_{a,b},
\end{aligned}
\end{eqnarray*}
which can be rewitten as
\begin{eqnarray*}
\begin{aligned}
& q_a^{m-n_{t}}
[[[X^+_{a}(m), [X^+_{a}(m),X^+_{a+1}(n_{t}-m)]_{q_{a+1}}]_{q^{-1}_{a+1}}, X^+_{a+2,b}(0)]_{q_b}v^{(t-1)}_{a,b}\\
&+\sum\limits_{s=0}^{|n_{t}-m|}c_s [[[X^+_{a}(m), X^+_{a+1}(\pm s)X^+_{a}(n_{t}\mp s)]_{q^{-1}_{a+1}}, X^+_{a+2,b}(0)]_{q_b}v^{(t-1)}_{a,b},
\end{aligned}
\end{eqnarray*}
by using \eqref{ii+1}.  The first term vanishes by \eqref{ii}, and the second terms is equal to
\begin{eqnarray*}
\sum\limits_{s=0}^{|n_{t}-m|}c_s\left(X^+_{a}(m)X^+_{a+1,b}
(\pm s)X^+_{a}(n_{t}\mp s)-{q^{-1}_{a+1}}X^+_{a+1,b}(\pm s)X^+_{a}(n_{t}\mp s)X^+_{a}(m)\right)v^{(t-1)}_{a,b},
\end{eqnarray*}
which obviously vanishes.
Similarly, one can prove that $X^+_b(m)v^{(t)}_{a,b}=0$.

For $a<i< M$,
\begin{eqnarray*}
\begin{aligned}
&X^+_{i}(m)v^{(t)}_{a,b}= [X^+_{i}(m),X^+_{a,b}(n_{t})]v^{(t-1)}_{a,b}\\
&=  [X^+_{a,i-2}(n_{t}),[ [X^+_i(m),[X^+_{i-1}(0),[X^+_{i}(0),X^+_{i+1}(0)]_{q}]_{q}], X^+_{i+2,b}(0)]_{q_b}]_q v^{(t-1)}_{a,b}\\
& =  q_i^{m}[X^+_{a,i-2}(n_{t}),[[X^+_i(m),[X^+_{i-1}(0),[X^+_{i}(m),X^+_{i+1}(-m)]_{q}]_{q}], X^+_{i+2,b}(0)]_{q_b}]_qv^{(t-1)}_{a,b}\\
& +\sum\limits_{s=0}^{|m|}c_s[X^+_{a,i-2}(n_{t}),[[X^+_i(m),[X^+_{i-1}(0),X^+_{i+1}(\pm s)X^+_{i}(\mp s)]_{q}], X^+_{i+2,b}(0)]_{q_b}]_qv^{(t-1)}_{a,b},
\end{aligned}
\end{eqnarray*}
where the first term on the right hand side vanishes by lemma \ref{lem:relat} (1).  Hence
\begin{eqnarray*}
X^+_{i}(m)v^{(t)}_{a,b}
&=&\sum\limits_{s=0}^{|m|}c_s [X_i^+(m),[X^+_{a,i-1}(n_{t}), X^+_{i+1,b}(\pm s)X^+_{i}(\mp s)]_{q_i} ]     v^{(t-1)}_{a,b}\\
&=&0.
\end{eqnarray*}
 Similarly, one can prove that $X^+_{i}(m)v^{(t)}_{a,b}=0$  for $M<i<b$.

Thus we have proved that $X^+_i(m)v^{(t)}_{a,b}=0$, $\forall i\ne M, m\in\Z.$

Now we prove $X^+_{k,l}(m)v^{(t)}_{a,b}=0 \mbox{ for all } m\in \Z, (k,l)> (a,b)$.

 For $k<a<M, l< b$,
\begin{eqnarray*}
\begin{aligned}
&X^+_{k,l}(m)v^{(t)}_{a,b}= X^+_{k,l}(m)X^+_{a,b}(n_t)v^{(t-1)}_{a,b}
=  [[X^+_{k,a}(m),X^+_{a+1,l}(0)]_{q},X^+_{a,b}(n_t)]v^{(t-1)}_{a,b}\\
&=[[X^+_{k,a}(m),[X^+_{a+1,l}(0),X^+_{a,b}(n_t)]]_qv^{(t-1)}_{a,b}
-[[X^+_{k,a}(m),X^+_{a,b}(n_t)],X^+_{a+1,l}(0)]_{q}v^{(t-1)}_{a,b},
\end{aligned}
\end{eqnarray*}
where the first term on the right hand side vanishes by Lemma \ref{lem:relat}. It is not difficult to show that $[X^+_{k,a}(m),X^+_{a,b}(n_t)]v^{(t-1)}_{a,b}=0$. Hence we can rewrite the right hand side as
\begin{eqnarray*}
\begin{aligned}
&-[X^+_{k,a}(m),X^+_{a,b}(n_t)]X^+_{a+1,l}(0)v^{(t-1)}_{a,b}\\
&=-[[X^+_{k,a-1}(m), X^+_a(0)]_{q}, [X^+_{a}(n_t), X^+_{a+1,b}(0)]_{q}]X^+_{a+1,l}(0)v^{(t-1)}_{a,b}\\
&=-[[[X^+_{k,a-1}(m),X^+_a(0)]_{q}, X^+_a(n_t)]_{q^{-1}}, X^+_{a+1,b}(0)]_{q^2}X^+_{a+1,l}(0)v^{(t-1)}_{a,b}\\
&-q^{-1}[X^+_a(n_t), [[X^+_{k,a-1}(m),X^+_a(0)]_{q}, X^+_{a+1,b}(0)]_{q}]_{q^2}X^+_{a+1,l}(0)v^{(t-1)}_{a,b}\\
&=-[[ [[X^+_{k,a-2}(m),X^+_{a-1}(0)]_{q},X^+_a(0)]_{q},  X^+_{a}(n_{t})]_{q^{-1}}, X^+_{a+1,b}(0)]_{q^{2}}X^+_{a+1,l}(0)v^{(t-1)}_{a,b}\\
&-q^{-1}[X^+_a(n_t), X^+_{k,b}(m)]_{q^2}X^+_{a+1,l}(0)v^{(t-1)}_{a,b},
\end{aligned}
\end{eqnarray*}
where the second term on the right hand side vanishes, and the first term can be rewritten as
\begin{eqnarray*}
\begin{aligned}
&-[[ X^+_{k,a-2}(m),[[X^+_{a-1}(0),X^+_a(0)]_{q},  X^+_{a}(n_{t})]_{q^{-1}} ]_{q},X^+_{a+1,b}(0)]_{q^{2}}X^+_{a+1,l}(0)v^{(t-1)}_{a,b}\\
=&-q^{-n_t}[[ X^+_{k,a-2}(m),[[X^+_{a-1}(-n_t),X^+_a(n_t)]_{q},  X^+_{a}(n_{t})]_{q^{-1}} ]_{q},X^+_{a+1,b}(0)]_{q^{2}}X^+_{a+1,l}(0)v^{(t-1)}_{a,b}\\
&+\sum\limits_{s=0}^{|n_t|}c_s[[ X^+_{k,a-2}(m),[X^+_{a}(\pm s)X^+_{a-1}(\mp s),  X^+_{a}(n_{t})]_{q^{-1}} ]_{q},X^+_{a+1,b}(0)]_{q^{2}}X^+_{a+1,l}(0)v^{(t-1)}_{a,b}.
\end{aligned}
\end{eqnarray*}
The first term on the right hand side vanishes by \eqref{ii}, and the second term can be expanded into
\begin{eqnarray*}
&&\sum\limits_{s=0}^{|n_t|}c_s [X^+_{a}(\pm s)[ X^+_{k,a-2}(m),X^+_{a-1}(\mp s)]_{q},  X^+_{a,b}(n_{t})]X^+_{a+1,l}(0)v^{(t-1)}_{a,b}\\
&&+q^2\sum\limits_{s=0}^{|n_t|}c_s [X^+_{a,b}(\pm s)[ X^+_{k,a-2}(m),X^+_{a-1}(\mp s)]_{q},  X^+_{a}(n_{t})]_{q^{-2}}X^+_{a+1,l}(0)v^{(t-1)}_{a,b}
\end{eqnarray*}
by using \eqref{q-bracket}. We can show that the first term vanishes identically, and the second can be rewritten as
\begin{eqnarray*}
\begin{aligned}
&q^2\sum\limits_{s=0}^{|n_t|}c_s X^+_{a,b}(\pm s)X^+_{k,a-2}(m)X^+_{a-1}(\mp s) X^+_{a}(n_{t})X^+_{a+1,l}(0)v^{(t-1)}_{a,b}\\
&=q^2\sum\limits_{s=0}^{|n_t|}c_s X^+_{a,b}(\pm s)X^+_{k,a-2}(m)X^+_{a-1}(\mp s) X^+_{a,l}(n_{t})v^{(t-1)}_{a,b}\\
&=q^2\sum\limits_{s=0}^{|n_t|}c_s X^+_{a,b}(\pm s)X^+_{k,l}(m\mp s+n_{t})v^{(t-1)}_{a,b}\\
& =0.
\end{aligned}
\end{eqnarray*}

By modifying the above computations slightly,
one can prove that  $X^+_{k,l}(m)v^{(t)}_{a,b}=0$ for $k<a=M, l< b.$
It is even easier to show that $X^+_{k,l}(m)v^{(t)}_{a,b}=0$ for $k<a, l\ge b.$

Now consider the cases $k\ge a, l>b$.

For $k<M$, we have
\begin{eqnarray*}
\begin{aligned}
&X^+_{k,l}(m)v^{(t)}_{a,b}= X^+_{k,l}(m)X^+_{a,b}(n_t)v^{(t-1)}_{a,b}\\
&=[X^+_k(m), [X^+_{k+1,b}(0),X^+_{b+1,l}(0)]_{q^{-1}}]_{q_{k+1}}X^+_{a,b}(n_t)v^{(t-1)}_{a,b}\\
&=X^+_k(m)[X^+_{k+1,b}(0),X^+_{b+1,l}(0)]_{q^{-1}}X^+_{a,b}(n_t)v^{(t-1)}_{a,b}\\
&=(X^+_k(m)X^+_{k+1,b}(0)X^+_{b+1,l}(0)-q^{-1}X^+_k(m)X^+_{b+1,l}(0)X^+_{k+1,b}(0))X^+_{a,b}(n_t)v^{(t-1)}_{a,b},
\end{aligned}
\end{eqnarray*}
which, by using  Lemma \ref{lem:relat},  can be expressed as
\begin{eqnarray*}
&&-qX^+_k(m)X^+_{k+1,b}(0)([X^+_{a,b}(n_t),X^+_{b+1,l}(0)]_{q^{-1}}-X^+_{a,b}(n_t)X^+_{b+1,l}(0))v^{(t-1)}_{a,b}\\
&&-q^{-2}X^+_k(m)X^+_{b+1,l}(0)X^+_{a,b}(n_t)X^+_{k+1,b}(0)v^{(t-1)}_{a,b}\\
&&=-qX^+_k(m)X^+_{k+1,b}(0)X^+_{a,l}(n_t)v^{(t-1)}_{a,b}+qX^+_k(m)X^+_{k+1,b}(0)X^+_{a,b}(n_t)X^+_{b+1,l}(0)v^{(t-1)}_{a,b}\\
&&-q^{-2}X^+_k(m)X^+_{b+1,l}(0)X^+_{a,b}(n_t)X^+_{k+1,b}(0)v^{(t-1)}_{a,b},
\end{eqnarray*}
where the first two terms  on the right hand side vanishe by \eqref{eq:induct}, and the third can be manipulated to yield
\begin{eqnarray*}
&&-q^{-2}X^+_k(m)[X^+_{b+1,l}(0),X^+_{a,b}(n_t)]_qX^+_{k+1,b}(0)v^{(t-1)}_{a,b}+
    X^+_k(m)X^+_{a,b}(n_t)X^+_{k+1,l}(0)v^{(t-1)}_{a,b}\\
&&=q^{-1}X^+_k(m)X^+_{a,l}(0)X^+_{k+1,b}(0)v^{(t-1)}_{a,b}+
    X^+_k(m)X^+_{a,b}(n_t)X^+_{k+1,l}(0)v^{(t-1)}_{a,b}\\
&&=X^+_k(m)X^+_{a,b}(n_t)X^+_{k+1,l}(0)v^{(t-1)}_{a,b}\\
&&=[X^+_k(m), [[X^+_{a,k-1}(n_t),[X^+_{k}(0),X^+_{k+1}(0)]_{q_{k+1}}]_{q_{k}},X^+_{k+2,b}(0)]_{q_{k+2}}]X^+_{k+1,l}(0)v^{(t-1)}_{a,b}\\
&&=q^{m}[X^+_k(m), [[X^+_{a,k-1}(n_t),[X^+_{k}(m),X^+_{k+1}(-m)]_{q_{k+1}}]_{q_{k}},X^+_{k+2,b}(0)]_{q_{k+2}}]X^+_{k+1,l}(0)v^{(t-1)}_{a,b}\\
&&+\sum\limits_{s=0}^{|m|}c_s[X^+_k(m), [[X^+_{a,k-1}(n_t),X^+_{k+1}(\pm s)X^+_{k}(\mp s)]_{q_{k}},X^+_{k+2,b}(0)]_{q_{k+2}}]X^+_{k+1,l}(0)v^{(t-1)}_{a,b},
\end{eqnarray*}
where the first  term on the right hand side vanishes by \eqref{lem:relat}. Hence we can rewrite the right hand side as
\begin{eqnarray*}
&&\sum\limits_{s=0}^{|m|}c_s[X^+_k(m), [X^+_{a,k-1}(n_t),X^+_{k+1,b}(\pm s)X^+_{k}(\mp s)]_{q_k}]X^+_{k+1,l}(0)v^{(t-1)}_{a,b}\\
&&=\sum\limits_{s=0}^{|m|}c_sX^+_k(m)X^+_{a,k-1}(n_t)X^+_{k+1,b}(\pm s)X^+_{k}(\mp s)X^+_{k+1,l}(0)v^{(t-1)}_{a,b}\\
&&=\sum\limits_{s=0}^{|m|}c_sX^+_k(m)X^+_{a,k-1}(n_t)X^+_{k+1,b}(\pm s)X^+_{k,l}(\mp s)v^{(t-1)}_{a,b}\\
&&= 0.
\end{eqnarray*}

Now we consider the case   $k= a=M, l>b$. Since
\[
X^+_{M,l}(m)v^{(t)}_{M,b}=[X^+_{M,b+1}(m),X^+_{b+2,l}(0)]_{q^{-1}}v^{(t)}_{M,b}
=-q^{-1}X^+_{b+2,l}(0)X^+_{M,b+1}(m)v^{(t)}_{M,b},
\]
it is sufficient to show that $X^+_{M,b+1}(m)v^{(t)}_{M,b}=0.$

By \eqref{>M} and $X^+_i(m)v^{(t-1)}_{M,b}=X^+_i(m)v^{(t)}_{M,b}=0, i\ne M$, we have
\[
X^+_{M,b+1}(m)X^+_{M,b}(n_t)v^{(t-1)}_{M,b}=[X^+_{M,b}(0),X^+_{b+1}(m)]_{q^{-1}} [X^+_{M,b-1}(0),X^+_b(n_t)]_{q^{-1}}v^{(t-1)}_{M,b}.
\]
Hence,
\begin{eqnarray*}
\begin{aligned}
&X^+_{M,b+1}(m)v^{(t)}_{M,b}=X^+_{M,b+1}(m)X^+_{M,b}(n_t)v^{(t-1)}_{M,b}\\
&=[[X^+_{M,b}(0),X^+_{b+1}(m)]_{q^{-1}}, [X^+_{M,b-1}(0),X^+_b(n_t)]_{q^{-1}}]_qv^{(t-1)}_{M,b}\\
&=[[[X^+_{M,b}(0),X^+_{b+1}(m)]_{q^{-1}},X^+_{M,b-1}(0)]_q,X^+_b(n_t)]_{q^{-1}}v^{(t-1)}_{M,b}\\
&-q[X^+_{M,b-1}(0),[[X^+_{M,b}(0),X^+_{b+1}(m)]_{q^{-1}},X^+_b(n_t)]]_{q^{-2}}v^{(t-1)}_{M,b}\\
&=-q[X^+_{M,b-1}(0),[[X^+_{M,b-2}(0),[X^+_{b-1}(0),[X^+_b(0),X^+_{b+1}(m)]_{q^{-1}}]_{q^{-1}}]_{q^{-1}},X^+_b(n_t)]]_{q^{-2}}v^{(t-1)}_{M,b}\\
&=[X^+_{M,b-1}(0),[X^+_{M,b-2}(0),[[X^+_{b-1}(0),[X^+_{b+1}(m),X^+_{b}(0)]_{q}]_{q^{-1}},X^+_b(n_t)]]_{q^{-1}}]_{q^{-2}}v^{(t-1)}_{M,b}\\
&=q^{n_t}[X^+_{M,b-1}(0),[X^+_{M,b-2}(0),[[X^+_{b-1}(0),[X^+_{b+1}(m-n_t),X^+_{b}(n_t)]_{q}]_{q^{-1}},X^+_b(n_t)]]_{q^{-1}}]_{q^{-2}}v^{(t-1)}_{M,b}\\
&+\sum\limits_{s=0}^{|n_t|}c_s
[X^+_{M,b-1}(0),[X^+_{M,b-2}(0),[[X^+_{b-1}(0),X^+_{b}(\pm s)X^+_{b+1}(m\mp s)]_{q^{-1}},X^+_b(n_t)]]_{q^{-1}}]_{q^{-2}}v^{(t-1)}_{M,b}\\
&=\sum\limits_{s=0}^{|n_t|}c_s
[X^+_{M,b-1}(0),[[X^+_{M,b-1}(0),X^+_{b}(\pm s)X^+_{b+1}(m\mp s)]_{q^{-1}},X^+_b(n_t)]]_{q^{-2}}v^{(t-1)}_{M,b}\\
&=\sum\limits_{s=0}^{|n_t|}c_s
[X^+_{M,b-1}(0),[[X^+_{M,b-1}(0),X^+_{b}(\pm s)]_{q^{-1}}X^+_{b+1}(m\mp s),X^+_b(n_t)]]_{q^{-2}}v^{(t-1)}_{M,b}\\
&=\sum\limits_{s=0}^{|n_t|}c_s
[[X^+_{M,b-1}(0),X^+_{b}(\pm s)]_{q^{-1}}X^+_{b+1}(m\mp s),X^+_b(n_t)]X^+_{M,b-1}(0)v^{(t-1)}_{M,b}\\
&=\sum\limits_{s=0}^{|n_t|}c_s
[X^+_{M,b-1}(0),X^+_{b}(\pm s)]_{q^{-1}}X^+_{b+1}(m\mp s)X^+_b(n_t)X^+_{M,b-1}(0)v^{(t-1)}_{M,b}\\
&+X^+_b(n_t)[X^+_{M,b-1}(0),X^+_{b}(\pm s)]_{q^{-1}}X^+_{b+1}(m\mp s)X^+_{M,b-1}(0)v^{(t-1)}_{M,b}\\
&=\sum\limits_{s=0}^{|n_t|}c_s
[X^+_{M,b-1}(0),X^+_{b}(\pm s)]_{q^{-1}}X^+_{b+1}(m\mp s)X^+_b(n_t)X^+_{M,b-1}(0)v^{(t-1)}_{M,b}\\
&=\sum\limits_{s=0}^{|n_t|}c_s
[X^+_{M,b-1}(0),X^+_{b}(\pm s)]_{q^{-1}}X^+_{M,b+1}(m\mp s+n_t)v^{(t-1)}_{M,b}\\
&=0.
\end{aligned}
\end{eqnarray*}
This completes the proof of the Lemma.
\end{proof}

\section{Proof of Lemma \ref{key-2}}\label{pf:key-2}
\begin{proof}[Proof of Lemma \ref{key-2}]
Note that \eqref{3-} directly follows from \eqref{3}. Hence we only need to prove \eqref{3}.

We first show that
\begin{equation}\label{0-1-0}
X^+_{a,b}(p)X^+_{a,b}(k)X^+_{a,b}(l)v_{a,b}=0 \quad \mbox{ for all }p, k, l\in\Z\mbox{ with } p\equiv l~({\rm mod}~2).
\end{equation}

For $a<M$, we have
\begin{eqnarray*}
        &&[X^+_{a,b}(p),X^+_{a,b}(p+1)]\\
&& =[[X^+_{a}(p),X^+_{a+1,b}(0)]_q,[X^+_{a}(p+1),X^+_{a+1,b}(0)]_q]\\
&&=[[X^+_{a}(p),X^+_{a+1,b}(0)]_q,X^+_{a}(p+1)]_{q^{-1}},X^+_{a+1,b}(0)]_{q^2}\\
   &&+q^{-1}[X^+_{a}(p+1),[[X^+_{a,b}(p),X^+_{a+1,b}(0)]_{q}]_{q^2}\\
&&=[[[X^+_{a}(p),X^+_{a+1}(0)]_{q},X^+_{a}(p+1)]_{q^{-1}},X^+_{a+2,b}(0)]_{q_{a+2}},X^+_{a+1,b}(0)]\\
&&=-q[[[X^+_{a+1}(0),X^+_{a}(p)]_{q^{-1}},X^+_{a}(p+1)]_{q^{-1}},X^+_{a+2,b}(0)]_{q_{a+2}},X^+_{a+1,b}(0)]\\
&&=q[[[X^+_{a}(p+1),X^+_{a+1}(-1)]_{q^{-1}},X^+_{a}(p+1)]_{q^{-1}},X^+_{a+2,b}(0)]_{q_{a+2}},X^+_{a+1,b}(0)]\\
&&=0.
\end{eqnarray*}
Similarly, one can show that $[X^+_{M,b}(p),X^+_{M,b}(p+1)]=0, b>M.$

From \eqref{2}, we have
\[
X^+_{a,b}(p)X^+_{a,b}(p\pm 1)X^+_{a,b}(l)v_{a,b}=0\quad \mbox { for all }p\equiv l~({\rm mod}~2).
\]
This establishes \eqref{0-1-0} for $|p-k|=1$.

We now use induction on $|p-k|$ to prove
\eqref{0-1-0}. By the induction hypothesis, for all $ p, k, l\in \Z$ with $p\equiv l~({\rm mod}~2),|p-k|\le 2i-1$,
\begin{equation}\label{induction}
X^+_{a,b}(p)X^+_{a,b}(k)X^+_{a,b}(l)v_{a,b}=0.
\end{equation}
We now consider $X^+_{a,b}(p)X^+_{a,b}(p+1+2i)X^+_{a,b}(l)v_{a,b}$.

For $a<M$,  we have
\begin{eqnarray*}
&&[X^+_{a,b}(p),X^+_{a,b}(p+1+2i)]\\
&&=[X^+_{a,b}(p),[X^+_{a}(p+1+2i),X^+_{a+1,b}(0)]_q]\\
&&=[[X^+_{a,b}(p),X^+_{a}(p+1+2i)]_{q^{-1}},X^+_{a+1,b}(0)]_{q^2}\\
&&+q^{-1}[X^+_a(p+1+2i),[X^+_{a,b}(p),X^+_{a+1,b}(0)]_q]_{q^2}
\end{eqnarray*}
where the second  term on the right hand side vanishes by Lemma \ref{lem:relat}, and the first can be rewritten as
\begin{eqnarray*}
 &&[[[[X^+_{a}(p),X^+_{a+1}(0)]_q,X^+_{a}(p+1+2i)]_{q^{-1}},X^+_{a+2,b}(0)]_{q_{a+2}},X^+_{a+1,b}(0)]_{q^2}\\
&&=q^{2i}[[[[X^+_{a}(p+2i),X^+_{a+1}(-2i)]_q,X^+_{a}(p+1+2i)]_{q^{-1}},X^+_{a+2,b}(0)]_{q_{a+2}},X^+_{a+1,b}(0)]_{q^2}\\
     &&+ \sum\limits_{s=1}^{2i}c_s[[ [X^+_{a+1}(-s)X^+_{a}(p+s),X^+_{a}(p+1+2i)]_{q^{-1}},X^+_{a+2,b}   (0)]_{q_{a+2}},X^+_{a+1,b}(0)]_{q^2},
\end{eqnarray*}
We note that  the first  term on the right hand side vanishes:
\begin{eqnarray*}
&&[[X^+_{a}(p+2i),X^+_{a+1}(-2i)]_q,X^+_{a}(p+1+2i)]_{q^{-1}},X^+_{a+2,b}(0)]_{q_{a+2}}\\
&&=-q[[X^+_{a+1}(-2i),X^+_{a}(p+2i)]_{q^{-1}},X^+_{a}(p+1+2i)]_{q^{-1}},X^+_{a+2,b}(0)]_{q_{a+2}}\\
&&=q[[X^+_{a}(p+2i+1),X^+_{a+1}(-2i-1)]_{q^{-1}},X^+_{a}(p+1+2i)]_{q^{-1}},X^+_{a+2,b}(0)]_{q_{a+2}}\\
&&=0.
\end{eqnarray*}
 Hence
\begin{eqnarray*}
&& [X^+_{a,b}(p),X^+_{a,b}(p+1+2i)]\\
&&= \sum\limits_{s=1}^{2i}c_s[[ [X^+_{a+1}(-s)X^+_{a}(p+s),X^+_{a}(p+1+2i)]_{q^{-1}},X^+_{a+2,b}   (0)]_{q_{a+2}},X^+_{a+1,b}(0)]_{q^2}\\
&&=\sum\limits_{s=1}^{2i}c_s[ [X^+_{a+1,b}(-s)X^+_{a}(p+s),X^+_{a}(p+1+2i)]_{q^{-1}},X^+_{a+1,b}(0)]_{q^2}.
\end{eqnarray*}
By \eqref{2},  $X^+_{a,b}(p)X^+_{a,b}(p+1+2i)X^+_{a,b}(l)v_{a,b}
=[X^+_{a,b}(p),X^+_{a,b}(p+1+2i)]X^+_{a,b}(l)v_{a,b}$.  Hence
\begin{eqnarray*}
&&X^+_{a,b}(p)X^+_{a,b}(p+1+2i)X^+_{a,b}(l)v_{a,b}\\
&&=\sum\limits_{s=1}^{2i}c_s[ [X^+_{a+1,b}(-s)X^+_{a}(p+s),X^+_{a}(p+1+2i)]_{q^{-1}},X^+_{a+1,b}(0)]_{q^2}
X^+_{a,b}(l)v_{a,b}.
\end{eqnarray*}
We can rewrite the right hand side as
\begin{eqnarray*}
&&\sum\limits_{s=1}^{2i}c_s\left(X^+_{a+1,b}(-s)X^+_a(p+s)X^+_a(p+1+2i)\right.\\
&&\left.-q^{-1}X^+_a(p+1+2i)X^+_{a+1,b}(-s)X^+_a(p+s)\right)X^+_{a+1,b}(0)X^+_{a,b}(l)v_{a,b}\\
&&=\sum\limits_{s=1}^{2i} c_sX^+_{a+1,b}(-s)X^+_a(p+s)X^+_{a,b}(p+1+2i)X^+_{a,b}(l)v_{a,b}\\
&&-\sum\limits_{s=1}^{2i}q^{-1} c_sX^+_a(p+1+2i)X^+_{a+1,b}(-s)X^+_{a,b}(p+s)X^+_{a,b}(l)v_{a,b}.
\end{eqnarray*}
Note that the first term on the right hand side vanishes by
\eqref{2}, and by using \eqref{2} and \eqref{ii+1} we can rewrite the second term as
\begin{eqnarray*}
&&-\sum\limits_{s=1}^{2i} q^{-1}c_s[[X^+_a(p+1+2i),X^+_{a+1}(-s)]_{q},X^+_{a+2,b}(0)]_{q_{a+2}}X^+_{a,b}(p+s)X^+_{a,b}(l)v_{a,b}\\
&&=\sum\limits_{s=1}^{2i} d_s[[X^+_a(p+1+2i-s),X^+_{a+1}(0)]_q,X^+_{a+2,b}(0)]_{q_{a+2}}X^+_{a,b}(p+s)X^+_{a,b}(l)v_{a,b}\\
&&+\sum\limits_{s=1}^{2i} d'_s\sum\limits_{r=0}^{s-1} c_r[X^+_{a+1}(-s+ r)X^+_{a}(p+1+2i- r),X^+_{a+2,b}(0)]_{q_{a+2}}X^+_{a,b}(p+s)X^+_{a,b}(l)v_{a,b}.
\end{eqnarray*}
The second term on the right hand side varnishes by \eqref{2}.  This leads to
\begin{eqnarray*}
&&X^+_{a,b}(p)X^+_{a,b}(p+1+2i)X^+_{a,b}(l)v_{a,b}\\
&&=\sum\limits_{s=1}^{2i}d_sX^+_{a,b}(p+1+2i-s)X^+_{a,b}(p+s)X^+_{a,b}(l)v_{a,b}.
\end{eqnarray*}
We observe that   $|(p+1+2i-s)-(p+s)|=|1+2i-2s|\le 2i$ for $1\le s\le 2i-1$. Thus $X^+_{a,b}(p)X^+_{a,b}(p+1+2i)X^+_{a,b}(l)v_{a,b}=0$ by \eqref{induction}.

\medskip

For $a=M$, by using \eqref{MM+1b}, we  obtain
\begin{eqnarray*}
&&[X^+_{M,b}(p),X^+_{M,b}(p+1+2i)]
=[X^+_{M,b}(p),[X^+_M(p+1+2i),X^+_{M+1,b}(0)]_{q^{-1}}]\\
&&=[[X^+_{M,b}(p),X^+_M(p+1+2i)]_{q^{-1}},X^+_{M+1,b}(0)]\\
&&-q^{-1}[X^+_M(p+1+2i),[X^+_{M,b}(p),X^+_{M+1,b}(0)]_q],
\end{eqnarray*}
where the second term on the right hand side varnishes by Lemma \ref{lem:relat}.
We note that
\begin{eqnarray*}
&&[X^+_{M,b}(p),X^+_M(p+1+2i)]_{q^{-1}}\\
&&=[[[X^+_{M}(p),X^+_{M+1}(0)]_{q^{-1}},X^+_M(p+1+2i)]_{q^{-1}}, X^+_{M+2,b}(0)]_{q^{-1}}\\
&&=q^{-1}[[[X^+_{M}(p+1),X^+_{M+1}(-1)]_{q},X^+_M(p+1+2i)]_{q^{-1}}, X^+_{M+2,b}(0)]_{q^{-1}}\\
&&=q^{2i-1}[[[X^+_{M}(p+1+2i),X^+_{M+1}(-1-2i)]_{q},X^+_M(p+1+2i)]_{q^{-1}}, X^+_{M+2,b}(0)]_{q^{-1}}\\
&&+\sum\limits_{s=0}^{2i-1}c_s[[X^+_{M}(p+1+s)X^+_{M+1}(-1-s),X^+_M(p+1+2i)]_{q^{-1}}, X^+_{M+2,b}(0)]_{q^{-1}}\\
&&=\sum\limits_{s=0}^{2i-1}c_s[X^+_{M}(p+1+s)X^+_{M+1,b}(-1-s),X^+_M(p+1+2i)]_{q^{-1}}.
\end{eqnarray*}
Hence
\begin{eqnarray*}
&&[X^+_{M,b}(p),X^+_{M,b}(p+1+2i)]\\
&&=\sum\limits_{s=0}^{2i-1}c_s[[X^+_{M}(p+1+s)X^+_{M+1,b}(-1-s),X^+_M(p+1+2i)]_{q^{-1}},X^+_{M+1,b}(0)].
\end{eqnarray*}
Now we have
\begin{eqnarray*}
\begin{aligned}
&X^+_{M,b}(p)X^+_{M,b}(p+1+2i)X^+_{M,b}(l)v_{M,b}\\
&=
[X^+_{M,b}(p),X^+_{M,b}(p+1+2i)]X^+_{M,b}(l)v_{M,b}\quad \mbox{ by } \eqref{induction}\\
&=\sum\limits_{s=0}^{2i-1}c_s[[X^+_{M}(p+1+s)X^+_{M+1,b}(-1-s),X^+_M(p+1+2i)]_{q^{-1}},X^+_{M+1,b}(0)]X^+_{M,b}(l)v_{M,b}\\
&=\sum\limits_{s=0}^{2i-1}c_s X^+_{M+1,b}(0)X^+_{M}(p+1+s)X^+_{M+1,b}(-1-s)X^+_M(p+1+2i)X^+_{M,b}(l)v_{M,b}\quad \mbox{ by }\eqref{2}\\
&=\sum\limits_{s=0}^{2i-1}c_s X^+_{M+1,b}(0)X^+_{M}(p+1+s)[X^+_{M+1,b}(-1-s),X^+_M(p+1+2i)]_qX^+_{M,b}(l)v_{M,b}
\end{aligned}
\end{eqnarray*}
By \eqref{>M+} and \eqref{2}, we note that,
\begin{eqnarray*}
&&[X^+_{M+1,b}(-1-s),X^+_M(p+1+2i)]_qX^+_{M,b}(l)v_{M,b}\\
&&=[[X^+_{M+1}(-1-s),X^+_M(p+1+2i)]_q,X^+_{M+2,b}(0)]_{q^{-1}}X^+_{M,b}(l)v_{M,b}\\
&&=\sum\limits_{s=0}^{2i-1} q^{-1-s}[[X^+_{M+1}(0),X^+_M(p+2i-s)]_q,X^+_{M+2,b}(0)]_{q^{-1}}X^+_{M,b}(l)v_{M,b}\\
&&+\sum\limits_{s=0}^{2i-1} \sum\limits_{r=0}^s c_r [X^+_M(p+1+2i-r)X^+_{M+1}(r-1-s),X^+_{M+2,b}(0)]_{q^{-1}}X^+_{M,b}(l)v_{M,b}\\
&&=\sum\limits_{s=0}^{2i-1} q^{-1-s}[[X^+_{M+1}(0),X^+_M(p+2i-s)]_q,X^+_{M+2,b}(0)]_{q^{-1}}X^+_{M,b}(l)v_{M,b}\quad \mbox{ by }\eqref{2}\\
&&=\sum\limits_{s=0}^{2i-1} q^{-s}X^+_{M,b}(p+2i-s)X^+_{M,b}(l)v_{M,b}.
\end{eqnarray*}
Hence,
\begin{eqnarray*}
&&X^+_{M,b}(p)X^+_{M,b}(p+1+2i)X^+_{M,b}(l)v_{M,b}\\
&&=\sum\limits_{s=0}^{2i-1}c_s q^{-s}X^+_{M+1,b}(0)X^+_{M}(p+1+s)X^+_{M,b}(p+2i-s)X^+_{M,b}(l)v_{M,b}\\
&&=\sum\limits_{s=0}^{2i-1}c_s q^{-s}[X^+_{M+1,b}(0)X^+_{M}(p+1+s)]_qX^+_{M,b}(p+2i-s)X^+_{M,b}(l)v_{M,b}\quad \mbox{ by }\eqref{2}\\
&&=-\sum\limits_{s=0}^{2i-1}c_sq^{1-s} X^+_{M,b}(p+1+s)X^+_{M,b}(p+2i-s)X^+_{M,b}(l)v_{M,b}.
\end{eqnarray*}
We observe that $X^+_{M,b}(p+2i)X^+_{M,b}(l)v_{M,b}=0$ by \eqref{2} since $p+2i\equiv l ({\rm mod} 2)$, and $|p+2i-s-(p+1+s)|\le 2i-1$ for $1\le s\le 2i-1$. Hence, from \eqref{induction} and  the above equality  we have
$X^+_{M,b}(p)X^+_{M,b}(p+1+2i)X^+_{M,b}(l)v_{M,b}=0.$
Now we have prove that
\begin{equation}\label{1+2i}
X^+_{a,b}(p)X^+_{a,b}(p+1+2i)X^+_{a,b}(l)v_{a,b}=0\quad (a,b)\in S.
\end{equation}
Similarly, one can prove that
\begin{equation}\label{-1-2i}
X^+_{a,b}(p)X^+_{a,b}(p-1-2i)X^+_{a,b}(l)v_{a,b}=0.
\end{equation}
This completes the proof of  \eqref{0-1-0}.

Using  the similar arguments  in the proof of \eqref{0-1-0} one can prove that
\begin{equation}\label{0-0-1}
X^+_{a,b}(p)X^+_{a,b}(k)X^+_{a,b}(l)v_{a,b} =0\quad \mbox{ for all }p, k, l\in\Z\mbox{ with } p\equiv k~({\rm mod}~2).
\end{equation}
Now \eqref{3} follows from \eqref{0-1-0}, \eqref{0-0-1} and \eqref{2}.
\end{proof}

\vspace{.5cm}

\noindent {\bf Acknowledgement.}
This work was supported by the Chinese National Natural Science Foundation grant No. 11271056,
Australian Research Council Discovery-Project Grants DP0986349 and DP140103239, Qing Lan Project of Jiangsu
Province, and Jiangsu Overseas Research and Training Program for Prominent Young and Middle Aged University Teachers and Presidents. Part of this work was completed when both authors visited the University of Science and Technology of China.

\vspace{.5cm}

\end{document}